\newcommand{\CC}{\mathbb{C}}
\newcommand{\QQ}{\mathbb{Q}}
\newcommand{\ZZ}{\mathbb{Z}}
\newcommand{\A}{\mathbb{A}}
\newcommand{\PP}{\mathbb{P}}
\newcommand{\RR}{\mathbb{R}}
\newcommand{\FF}{\mathbb{F}}
\newcommand{\GG}{\mathbb{G}}
\newcommand{\DD}{\mathbb{D}}
\newcommand{\C}{\mathcal{C}}
\newcommand{\U}{\mathcal{U}}
\newcommand{\X}{\mathcal{X}}
\newcommand{\OO}{\mathcal{O}}
\newtheorem{thm}{Theorem}[subsection]
\newtheorem{cor}[thm]{Corollary}
\newtheorem{conj}[thm]{Conjecture}
\newtheorem{lem}[thm]{Lemma}
\newtheorem{prop}[thm]{Proposition}
\newtheorem{question}[thm]{Question}
\theoremstyle{definition}
\newtheorem{define}[thm]{Definition}
\newtheorem{const}[thm]{Construction}
\theoremstyle{remark}
\newtheorem{rem}[thm]{Remark}
\newtheorem{example}[thm]{Example}
\DeclareMathOperator{\Gal}{Gal}
\DeclareMathOperator{\Pic}{Pic}
\DeclareMathOperator{\Br}{Br}
\DeclareMathOperator{\Log}{Log}
\DeclareMathOperator{\ev}{ev}
\DeclareMathOperator{\Hom}{Hom}
\DeclareMathOperator{\df}{def}
\DeclareMathOperator{\spec}{spec}
\DeclareMathOperator{\rank}{rank}
\DeclareMathOperator{\Aut}{Aut}
\DeclareMathOperator{\et}{\acute{e}t}
\def\alp{{\alpha}}
\def\sig{{\sigma}}
\def\vphi{{\varphi}}
\def\Gam{{\Gamma}}
\def\Del{{\Delta}}
\def\Lam{{\Lambda}}
\def\vphi{{\varphi}}
\def\lrar{\longrightarrow}
\def\hrar{\hookrightarrow}
\def\x{\stackrel}
\def\ovl{\overline}
\def\wtl{\widetilde}
\def\bksl{\;\backslash\;}
\title{Geometry and arithmetic of certain log K3 surfaces}
\author{Yonatan Harpaz}
\begin{document}
\maketitle

\begin{abstract}
Let $k$ be a field of characteristic $0$. In this paper we describe a classification of smooth log K3 surfaces $X$ over $k$ whose geometric Picard group is trivial and which can be compactified into del Pezzo surfaces. We show that such an $X$ can always be compactified into a del Pezzo surface of degree $5$, with a compactifying divisor $D$ being a cycle of five $(-1)$-curves, and that $X$ is completely determined by the action of the absolute Galois group of $k$ on the dual graph of $D$. When $k=\QQ$ and the Galois action is trivial, we prove that for any integral model $\X/\ZZ$ of $X$, the set of integral points $\X(\ZZ)$ is not Zariski dense. We also show that the Brauer Manin obstruction is not the only obstruction for the integral Hasse principle on such log K3 surfaces, even when their compactification is ``split''.
\end{abstract}

\tableofcontents

\section{Introduction}

Let $k$ be a number field, $S$ a finite set of places and $\OO_S$ the ring of $S$-integers in $k$. By an $\OO_S$-scheme we mean a separated scheme of finite type over $\OO_S$. A fundamental question in number theory is to understand the set $\X(\OO_S)$ of $S$-integral points. In this paper we shall be interested in the case where $X = \X \otimes_{\OO_S} k$ is a smooth \textbf{log K3 surface}. 

When studying integral points on schemes, the following two questions are of great interest.
\begin{enumerate}
\item
Given an $\OO_S$-scheme $\X$, is the set $\X(\OO_S)$ non-empty?
\item
If the set $\X(\OO_S)$ is non-empty, is it in any sense ``large''?
\end{enumerate}

Let us begin with the second question. Consider the question of counting points of bounded height with respect to some height function. Informally speaking, the behavior of integral points on smooth log K3 surfaces is expected to parallel the behaviour of rational points on smooth and proper K3 surfaces. The following is one of the variants of a conjecture appearing in~\cite{VL}:

\begin{conj}[\cite{VL}]\label{c:vl}
Let $X$ be a K3 surface over a number field $k$, and let $H$ be a height function associated to an ample divisor. Suppose that $X$ has geometric Picard number $1$. Then there exists a Zariski open subset $U \subseteq X$ such that 
$$ \#\{P \in U(k) | H(P) \leq B\} = O(\log(B)) .$$
\end{conj}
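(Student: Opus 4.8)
This is a form of van Luijk's conjecture and, to my knowledge, remains open; what follows is the strategy I would pursue, together with the point at which it currently breaks down. The organizing principle is the Batyrev--Manin--Peyre heuristic. Since $X$ is K3 we have $K_X = 0$, so for the ample generator $H$ of $\Pic(\bar X) = \ZZ H$ (say $H^2 = 2d$) the Manin invariant $a(H) = \inf\{t : tH + K_X \text{ effective}\}$ is $0$, and the accompanying $b$-invariant equals $\rank \Pic(\bar X) = 1$. The formalism then predicts $O(1)$ points of bounded height away from the accumulating subvarieties, so the $O(\log B)$ in the statement is a deliberately safe weakening, and my aim would be to establish this \emph{subpolynomial} bound rather than the sharp count.

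The first step is to isolate the accumulating locus. Rational points can only pile up along curves of geometric genus $0$ or $1$: a rational curve of $H$-degree $n$ carries $\Theta(B^{2/n})$ points of height at most $B$ (via its $\PP^1$-parametrization and the comparison of heights), while a genus-one curve of Mordell--Weil rank $r$ carries $\Theta((\log B)^{r/2})$. Because every curve class is a multiple $mH$, a rational curve has $H$-degree $2dm \ge 2d$, there are only finitely many in each class, and the densest are those in the primitive class $H$, contributing $\Theta(B^{1/d})$ each. I would define $U$ to be the complement of the finite union of all rational and genus-one curves of $H$-degree below a fixed bound. Every point of $U(k)$ of height $\le B$ then lies either on a rational or genus-one curve of large degree or on no such curve at all.

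The core of the proof is to bound $\#\{P \in U(k) : H(P) \le B\}$ uniformly in $B$, and this splits into two tasks. The first is to sum the contributions of the non-removed, necessarily high-degree, curves: a rational curve of degree $n \gg \log B$ carries $O(1)$ points, so this reduces to an \emph{effective} count of the $k$-rational curves on $X$ by degree, a delicate question tied to Gromov--Witten / Yau--Zaslow type enumeration and to the Galois action on the (geometrically finite) set of curves in each class $mH$. The second is to bound the points lying on no low-degree curve; here the available tool is the determinant method of Heath-Brown and Salberger, which for a surface of degree $2d$ with its lines removed yields $\#\{P : H(P) \le B\} = O_\eps(B^{\theta + \eps})$ for some $\theta = \theta(d) > 0$.

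The main obstacle is exactly the gap between such \emph{polynomial} bounds and the conjectured polylogarithmic one. No present technique produces subpolynomial counts for the rational points on a single surface of this type: the determinant method saturates at a positive power of $B$, the circle method requires far more variables than a surface provides, and the Vojta--Lang hyperbolicity circle of ideas concerns varieties of general type rather than the Kodaira-dimension-zero surface $X$. A genuine proof would need to exploit the specific rigidity of the $\rank \Pic(\bar X) = 1$ situation --- which in particular forces $\Aut(X)$ to be finite, so points cannot be propagated by automorphisms --- perhaps through a bootstrap that repeatedly strips off the curves along which points of growing height are constrained to lie. Absent such a new idea, both the finiteness control of the $k$-rational curves in the second task and, above all, the passage from polynomial to polylogarithmic growth on $U$ are the crux of the problem.
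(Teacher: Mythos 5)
This statement is Conjecture~\ref{c:vl} of the paper: it is van Luijk's conjecture, which the paper states \emph{without proof}, citing~\cite{VL} purely as motivation for the integral-points analogue (Conjecture~\ref{c:pic-0}). You are correct that it is open, and correct not to claim a proof; there is no argument in the paper to compare yours against. Your heuristic discussion is sound as far as it goes --- $a(H)=0$ and $b=\rank\Pic(\ovl{X})=1$ in the Batyrev--Manin formalism, the $\Theta(B^{2/n})$ count on rational curves of degree $n$ and $\Theta((\log B)^{r/2})$ on genus-one curves of rank $r$, finiteness of curves in each class $mH$, and finiteness of $\Aut(X)$ when the Picard number is $1$ --- and your identification of the passage from polynomial bounds (determinant method) to polylogarithmic ones as the fundamental obstruction accurately reflects why the conjecture remains out of reach. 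Had you produced a purported complete proof, that would have been the red flag.
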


When considering integral points on smooth log K3 surfaces, one might expect to obtain a similar logarithmic estimate for the growth of \textbf{integral points}. We note that the minimal geometric Picard number a non-proper smooth log K3 surface may attain is $0$ (although one might argue that this is by no means the ``generic'' case, see Remark~\ref{r:rigid}). We then consider the following conjecture:

\begin{conj}\label{c:pic-0}
Let $k$ be a number field and $S$ a finite set of places of $k$ containing the infinite places. Let $\X$ be a smooth, separated scheme over $\OO_S$ such that $X = \X \otimes_{k} X$ is a log K3 surface with $\Pic(X \otimes_{k} \ovl{k}) = 0$. Let $H$ be a height function associated to an ample divisor. Then there exists a Zariski open subset $\U \subseteq \X$ and a constant $b$ such that
$$ \#\{P \in \U(\OO_k) | H(P) \leq B\} \simeq O\left(\log(B)^b\right). $$
\end{conj}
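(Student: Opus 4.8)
The plan is to replace $X$ by the explicit model furnished by the classification. Having trivial geometric Picard group and a del Pezzo compactification, we may write $X = Y \setminus D$ with $Y$ a del Pezzo surface of degree $5$ and $D = C_1 + \cdots + C_5 = -K_Y$ an anticanonical cycle of five $(-1)$-curves, so $C_i^2 = -1$, $C_i \cdot C_{i+1} = 1$ cyclically and $D^2 = 5$. In particular $D$ is ample, hence $X$ is affine and every curve in $X$ meets the boundary. An $\OO_S$-point of an integral model $\X$ is then a point of $Y(k)$ whose reduction avoids $D$ at every place outside $S$, and the problem becomes one of counting $S$-integral points on a degree $5$ del Pezzo surface that stay away from the fixed boundary pentagon, weighted by the ample height $H$.

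The role of the open set $\U$ is to excise the loci on which integral points grow polynomially rather than logarithmically. Since $X$ is affine, a curve $\Gam \subset X$ carries infinitely many $S$-integral points only if its closure $\overline{\Gam}$ is rational and meets $D$ in at most two points; if $\overline{\Gam}$ meets $D$ in exactly one point then $\Gam \cong \A^1$, whose integral points of height $\leq B$ number $\sim B^{c}$. First I would prove that these $\A^1$-curves lie in a proper Zariski closed set $Z$ and take $\U = X \setminus Z$, so that on $\U$ the only curves with infinitely many integral points are copies of $\GG_m$.

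Next I would count on $\U$. By Siegel's theorem any curve of positive genus, and any rational curve meeting $D$ in three or more points, carries only finitely many integral points, contributing $O(1)$. The remaining curves are $\GG_m$'s, which I would organise via the five conic-bundle fibrations $\pi_i \colon Y \to \PP^1$ on a degree $5$ del Pezzo surface: the generic fibre is a conic meeting $D = -K_Y$ in $D \cdot F = 2$ points, hence a $\GG_m$ after removing the boundary. If the base—$\PP^1$ minus the degenerate and boundary loci—is hyperbolic, Siegel confines integral points to finitely many $\GG_m$-fibres; otherwise the base is itself a $\GG_m$ and an integral point injects into a product of base and fibre $S$-integral points. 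On a $\GG_m \cong \Spec \OO_{S}[t,t^{-1}]$ the integral points are finitely many cosets of $\OO_S^\times$, and $\#\{u \in \OO_S^\times : H(u) \leq B\} = O\big((\log B)^{r}\big)$ with $r = \rank \OO_S^\times$; since any ample height restricts to a bounded power of the standard one, the exponent is unchanged, each fibre contributes $O\big((\log B)^{r}\big)$, and the product gives at worst $O\big((\log B)^{2r}\big)$. This yields the conjectural bound with $b \leq 2r$, provided $\U$ has been chosen to remove not only the $\A^1$-curves but also any pencil whose base degenerates to $\A^1$, which would reintroduce polynomial growth.

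The hard part is the first step: showing that the $\A^1$- and $\GG_m$-loci are suitably finite and non-accumulating, and doing so for an arbitrary Galois action on the pentagon. The natural tool, the Schmidt subspace theorem in the surface form of Corvaja–Zannier, Levin and Autissier, is obstructed here because the boundary components $C_i$ are $(-1)$-curves and hence not big, so the positivity hypotheses fail directly; one is forced to feed in the conic-bundle structure and reduce to the one-dimensional Siegel/$S$-unit situation, which becomes delicate when the pentagon and the pencils are permuted nontrivially by Galois. This is why the unconditional theorem is established only for $k = \QQ$ with trivial Galois action, where each $C_i$ is defined over $\QQ$, the fibrations are split, and the boundary-avoiding condition can be written down and counted explicitly; the general polylogarithmic count, like Conjecture~\ref{c:vl} for proper K3 surfaces, would follow from the expected degeneracy of integral points on log Calabi–Yau surfaces of Picard number $0$ but appears to require inputs beyond the present subspace-theorem technology.
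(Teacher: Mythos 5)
The statement you were asked about is Conjecture~\ref{c:pic-0}; the paper does not prove it, and neither does your proposal. What the paper actually establishes is much weaker evidence: Theorem~\ref{t:zariski}, which treats only $k=\QQ$, $S=\{\infty\}$ and trivial characteristic class $\alp_X=0$. There the argument is entirely elementary: by the classification (Proposition~\ref{p:realization}), $X$ is the explicit affine surface $(xy-1)t=x-1$; any point of $\X(\ZZ)$ gives an $M$-integral solution for some fixed $M$ depending only on the model, and the inequality $|(xy-1)t|=|(x-1)y+(y-1)|\,|t|>|x-1|$ (valid once $|y|>2M$, $|x-1|>2C$, $t\neq 0$) forces all $M$-integral solutions onto finitely many explicit curves. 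Note that even this yields only non-Zariski-density, not the polylogarithmic count asserted by the conjecture; moreover the paper's example~\ref{e:d7-dense} (with $\alp_X\neq 0$) has Zariski-dense integral points, showing that any proof of the conjecture must accommodate $b\geq 1$ and cannot proceed by excising all curves with infinitely many integral points. Your proposal, by contrast, attacks the general conjecture, and you yourself concede at the end that it cannot be completed with present technology; so it must be judged as a sketch with gaps rather than a proof, and compared with the paper it is also attacking a much stronger statement than the one the paper actually proves.

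Two of the gaps are concrete and fatal as written. First, your opening step, proving that the $\A^1$-curves lie in a proper Zariski-closed subset $Z$, is false: on a log K3 surface of this type (the complement of the pentagon of $(-1)$-curves in a degree $5$ del Pezzo) the $\A^1$-curves are Zariski dense over $\ovl{k}$, so no such $Z$ exists. At best one can hope that only finitely many of them are defined over $k$ and carry infinitely many integral points of the given model; that is an arithmetic finiteness statement your sketch never addresses, and it is exactly the kind of statement the paper avoids by working with one explicit equation. Second, the counting step over the conic fibrations does not close: when the base of a fibration is itself a $\GG_m$, you sum fibre counts $O\left((\log B)^{r}\right)$ over $O\left((\log B)^{r}\right)$ base points, but the implied constant in each fibre count (the number of $\OO_S^\times$-cosets of integral points on that fibre, governed by norm-equation and class-group data) varies with the fibre and is not uniformly bounded. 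The paper's dense example is precisely of this shape: the curves $\C_p$ given by $x^2-ay^2=p$ are $\GG_m$-fibres, and the construction there produces infinitely many fibres each with infinitely many integral points, with no uniform control. Without such uniformity the asserted bound $O\left((\log B)^{2r}\right)$ does not follow, which is why the author leaves the quantitative count as a conjecture and poses, after that example, the question of what the correct exponent $b$ should be.
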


Our main goal in this paper is to give evidence for this conjecture. We focus our attention on what can be considered as the simplest class of log K3 surfaces, namely, those whose log K3 structure comes from a compactification into a del Pezzo surface. We call such log K3 surfaces \textbf{ample log K3 surfaces}.

The bulk of this paper is devoted to the classification of log K3 surfaces of this ``simple'' kind over a general base field $k$ of characteristic $0$, under the additional assumption that $\Pic(X \otimes_k \ovl{k}) = 0$. Our first result is that such surfaces can always be compactified into a del Pezzo surface $\ovl{X}$ of degree $5$, with a compactifying divisor $D = \ovl{X} \bksl X$ being a cycle of five $(-1)$-curve. The Galois action on the dual graph of $D$ then yields an invariant $\alp \in H^1(k,\DD_5)$, where $\DD_5$ is the dehidral group of order 10, considered here as the automorphism group of a cyclic graph of length $5$. We then obtain the following classification theorem:
\begin{thm}[See Theorem~\ref{t:well-defined} and Theorem~\ref{t:main-class-2}]\label{t:main-class}
The element $\alp = \alp_X \in H^1(k,\DD_5)$ does not depend on the choice of $\ovl{X}$. Furthermore, the association $X \mapsto \alp_X$ determines a bijection between the set of $k$-isomorphism classes of ample log K3 surfaces of Picard rank $0$ and the Galois cohomology set $H^1(k,\DD_5)$.
\end{thm}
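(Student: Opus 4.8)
The plan is to deduce both assertions from a single geometric input---the determination of the automorphism group of the split surface over $\ovl{k}$---and then to invoke the standard theory of forms. Write $X_0 = \ovl{X}_0 \bksl D_0$ for the split object, where $\ovl{X}_0$ is the (unique) degree $5$ del Pezzo surface over $\ovl{k}$ and $D_0 = \sum_{i=1}^{5} C_i$ is a cycle of five of its $(-1)$-curves. Recall that $\ovl{X}_0$ carries exactly ten $(-1)$-curves whose intersection graph is the Petersen graph, that $\Aut(\ovl{X}_0) \cong S_5$ acts on them as the full automorphism group of that graph, and that the five-cycles among the ten curves form a single orbit of size $12$. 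By orbit--stabilizer ($120 = 12 \cdot 10$), the stabilizer $\Stab_{S_5}(D_0)$ of the pentagon $D_0$ is a dihedral group of order $10$, which I identify with $\DD_5$ via its faithful action on the dual graph $\Gamma$ of $D_0$, a $5$-cycle.

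The heart of the argument, and the step I expect to be the main obstacle, is to prove that $\Aut_{\ovl{k}}(X_0) = \Stab_{S_5}(D_0) \cong \DD_5$; equivalently, that every automorphism of the open surface $X_0$ is the restriction of an automorphism of $\ovl{X}_0$ preserving $D_0$, and that no further automorphisms occur. One inclusion is immediate. For the reverse, I would exploit the intrinsic geometry of $X_0$: the ten lines of $\ovl{X}_0$ split into the five boundary components $C_i$ and five ``interior'' $(-1)$-curves meeting $X_0$, and these interior curves, together with the anticanonical cycle at infinity recorded by the log structure, reconstruct $\ovl{X}_0$ from $X_0$. Any $\varphi \in \Aut_{\ovl{k}}(X_0)$ must therefore permute the interior lines compatibly with the boundary cycle and extend to a biregular automorphism of $\ovl{X}_0$ stabilizing $D_0$; the triviality of $\Pic(X_0 \otimes \ovl{k})$ is what rules out extra, non-extending automorphisms and keeps the group finite. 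The same reconstruction shows that any two degree-$5$ del Pezzo compactifications of a fixed log K3 surface $X$ are related by an isomorphism restricting to the identity on $X$, whence the class read off from the boundary is independent of the chosen $\ovl{X}$---this is precisely the first assertion of the theorem.

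With $\Aut_{\ovl{k}}(X_0) \cong \DD_5$ in hand, the classification is formal. Being a log K3 surface of geometric Picard rank $0$ that admits a del Pezzo compactification is a geometric condition, so by the compactification result established above together with the transitivity of $S_5$ on pentagons, every ample log K3 surface $X$ of Picard rank $0$ over $k$ satisfies $X \otimes_k \ovl{k} \cong X_0$; thus $X$ is exactly a $k$-form of $X_0$. Galois descent then yields a bijection between $k$-isomorphism classes of such forms and $H^1(k, \Aut_{\ovl{k}}(X_0)) = H^1(k, \DD_5)$, the Galois action on $\DD_5$ being trivial since $X_0$ is already defined over $k$. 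It remains to match this descent class with $\alp_X$: for the form $X$ associated to a cocycle $c \colon \Gal(\ovl{k}/k) \to \DD_5$, the twisted Galois action on the five boundary components---equivalently on the dual graph $\Gamma$---is $c$ composed with the identification $\DD_5 = \Aut(\Gamma)$, so $\alp_X = [c]$. Surjectivity can alternatively be seen directly by twisting the split pair $(\ovl{X}_0, D_0)$ by a cocycle valued in $\DD_5 \subset S_5$, which produces a degree-$5$ del Pezzo surface with pentagon boundary realizing any prescribed class.
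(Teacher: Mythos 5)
Your argument stands or falls on the claim you yourself identify as its heart: that $\Aut_{\ovl{k}}(X_0)\cong\DD_5$, equivalently that every automorphism of the open surface $X_0$ extends to the pentagon compactification, equivalently that any two degree-$5$ pentagon compactifications of a fixed $X$ are isomorphic \emph{over} $X$. For this claim you offer assertions, not a proof, and the two mechanisms you invoke are not the operative ones. Triviality of $\Pic(X_0\otimes\ovl{k})$ by itself rules out nothing: $\A^2 = \PP^2 \bksl L$ has trivial geometric Picard group, constant units and is simply connected, yet it has an enormous automorphism group almost none of which extends to $\PP^2$. Likewise, saying that the five interior $(-1)$-curves ``reconstruct $\ovl{X}_0$ from $X_0$'' presupposes exactly what must be shown, namely that an arbitrary $\varphi\in\Aut_{\ovl{k}}(X_0)$ permutes those five curves; since $\Pic(X_0\otimes\ovl{k})=0$ you cannot detect them by divisor classes, so you need an \emph{intrinsic} characterization of them inside $X_0$. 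Such a characterization exists and is the real missing lemma: the interior curves are the only closed curves in $X_0$ isomorphic to $\A^1$. This follows because the pentagon classes form a unimodular basis of $\Pic(\ovl{X}_{0,\ovl{k}})$ whose dual basis is precisely the inner pentagon (Proposition~\ref{p:d5}), so any irreducible curve whose intersection with the boundary is concentrated at a single point (smooth point or node) has class $mW_i$ or $m_0W_i+m_1W_j$ with $W_iW_j=0$, and adjunction gives negative arithmetic genus in all cases except the interior lines themselves. Nothing of this sort appears in your proposal, and without it (plus an argument that permuting the interior curves forces extension) the ``heart'' is a restatement of the theorem rather than a proof.

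It is worth stressing that your key claim is strictly \emph{stronger} than what the paper establishes, and the paper is structured to avoid it: Theorem~\ref{t:well-defined} proves only that the dual graphs $G(D)$ and $G(D')$ are Galois-equivariantly isomorphic, and its proof explicitly allows the case in which the strict transforms of the two boundaries in a common resolution are disjoint, where the graphs get identified through the antipodal correspondence between components of $D$ and nodes of $D$, not through any isomorphism over $X$. Ruling that case out --- which your route requires --- again needs the $\A^1$-curve lemma above fed into the resolution machinery of Proposition~\ref{p:cofiltered}, so you do not actually escape that apparatus. Two smaller points: your reduction of every ample rank-$0$ log K3 surface to a $k$-form of $X_0$ silently uses the existence of a degree-$5$ pentagon structure (Proposition~\ref{p:classification-1}), which is not ``established above'' in your text; and your surjectivity argument is the paper's own twisting construction (Theorem~\ref{t:existence}). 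Granting the automorphism computation, the remaining descent formalism (affine varieties descend effectively, forms correspond to $H^1(k,\DD_5)$ with trivial action, the descent class matches $\alp_X$) is correct and would indeed give a cleaner and more conceptual proof than the paper's path through Theorems~\ref{t:well-defined} and~\ref{t:uniqueness}; but as written the decisive step is missing.
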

%
%

Our second main result is a verification of Conjecture~\ref{c:pic-0} for ample log K3 surface of Picard rank $0$ corresponding to the trivial class in $H^1(k,\DD_5)$. More precisely, we have the following
\begin{thm}[See Theorem~\ref{t:zariski}]\label{t:zariski-intro}
Let $\X$ be a separated smooth scheme of finite type over $\ZZ$ such that $X = \X \otimes_{\ZZ} \QQ$ is an ample log K3 surface of Picard rank $0$ and such that $\alp_X = 0$. Then the set of integral points $\X(\ZZ)$ is not Zariski dense.
\end{thm}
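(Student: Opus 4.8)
The plan is to make the hypothesis $\alp_X=0$ completely explicit and reduce the statement to an elementary Diophantine finiteness result, with the finitely many primes where the model is non-standard controlled $p$-adically by compactness.

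First I would use Theorem~\ref{t:main-class} to pin down $X$. Since $\alp_X=0$, the surface is the \emph{split} one: its compactification $\ovl{X}$ (a del Pezzo surface of degree $5$, i.e. $\ovl{M}_{0,5}$), the pentagon $D$, and the five complementary $(-1)$-curves $Z_1,\dots,Z_5$ (the ``pentagram'' of interior lines) are all defined over $\QQ$, and the symmetry group $\DD_5$ acts on the pair $(\ovl{X},D)$ over $\QQ$. Each $Z_i$ meets $D$ in a single point, so $Z_i\cap X\cong\A^1$. These interior curves will carry most of the integral points, and the whole point is to show that, up to a finite set, the integral points lie on them.

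Next, cluster coordinates. I would produce five regular functions $a_1,\dots,a_5\in\OO(X)$, obtained from cross-ratios of the five boundary points (equivalently, the $A_2$ cluster variables), normalized so that $\divisor_X(a_i)=Z_i$ and so that they satisfy the pentagon (Zamolodchikov) relations $a_{i-1}a_{i+1}=a_i+1$, indices mod $5$. Because $\dim X=2$ and $(a_1,a_2)$ may be chosen freely, these relations realize an isomorphism of $X$ with the smooth affine surface $\{(a_1,\dots,a_5)\in\A^5 : a_{i-1}a_{i+1}=a_i+1,\ i\in\ZZ/5\}$, on which $Z_i=\{a_i=0\}$. I expect the most delicate geometric point to be checking that these cross-ratio functions are everywhere regular on $X$ and that their divisors are exactly the $Z_i$.

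The arithmetic core is a uniform bound on denominators. For a point off every $Z_i$ all five coordinates are nonzero. For an arbitrary smooth separated model $\X/\ZZ$ and $P\in\X(\ZZ)$ the $a_i$ are regular on $X=\X\otimes_\ZZ\QQ$, hence integral at all primes of good reduction, so $a_i(P)\in\ZZ[1/N]$ for a fixed $N$. At each of the finitely many primes $p\mid N$ the set $\X(\ZZ_p)$ is compact and $a_i$ is $p$-adically continuous on it, so $v_p(a_i(P))$ is bounded below uniformly in $P$; hence there is a single integer $M$ with $Ma_i(P)\in\ZZ$ for all $i$ and all $P$. This is exactly where working over $\ZZ$ is essential: over a larger ring $\OO_S$ the relevant unit group is infinite, denominators are unbounded, and the integral points are in fact Zariski dense, so the statement is genuinely special to $S=\{\infty\}$.

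Finally I would settle the finiteness. Writing $c_i=Ma_i(P)\in\ZZ\setminus\{0\}$ for $P\in\X(\ZZ)$ off the $Z_i$, the relations become $c_{i-1}c_{i+1}=Mc_i+M^2$. The conditions $c_3,c_4,c_5\in\ZZ\setminus\{0\}$ yield inequalities of the shape $|c_1|\le M(|c_2|+M)$, $|c_2|\le M(|c_1|+M)$ and $|c_1c_2|\le M^2(|c_1|+|c_2|+M)$, which force $|c_i|=O(M^2)$; thus only finitely many such $P$ occur. Therefore $\X(\ZZ)\subseteq Z_1\cup\cdots\cup Z_5\cup F$ with $F$ finite, a proper Zariski-closed subset, and $\X(\ZZ)$ is not Zariski dense.
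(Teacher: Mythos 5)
Your proposal is correct, and its skeleton coincides with the paper's proof of Theorem~\ref{t:zariski}: use the classification to replace $X$ by an explicit affine model, uniformly clear denominators of the coordinate functions over $\X(\ZZ)$ (your good-reduction-plus-compactness argument at the bad primes is interchangeable with the paper's observation that the poles of the coordinate functions are vertical divisors), and finish with elementary inequalities confining $M$-integral solutions to a proper closed subset. The genuine difference is the model. The paper invokes Proposition~\ref{p:realization} to get $X \cong \{(xy-1)t = x-1\} \subseteq \A^3$ and shows all $M$-integral solutions lie on finitely many curves ($t=0$, $x$ constant, or $y$ constant); you use the pentagon model $\{a_{i-1}a_{i+1}=a_i+1\} \subseteq \A^5$. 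Your arithmetic does check out: setting $c_i = Ma_i(P)$, the relations give $c_1c_3=M(c_2+M)$, $c_2c_5=M(c_1+M)$, and, eliminating $c_3$, also $c_1c_2c_4=M^2(c_1+c_2+M)$, so the conditions $|c_3|,|c_5|,|c_4|\geq 1$ yield precisely your three inequalities; these force $\min(|c_1|,|c_2|)=O(M^2)$ and then $\max(|c_1|,|c_2|)=O(M^3)$ (your ``$O(M^2)$'' is a harmless slip), and since $(a_1,a_2)$ determines the point, only finitely many integral points lie off $Z_1\cup\dots\cup Z_5$ --- a cleaner and more symmetric conclusion than the paper's. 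What you still owe is the step you yourself flag: that the split surface really is the pentagon surface with $\divisor_X(a_i)=Z_i$. Within this paper the cheapest route is not via cross-ratios but via the same mechanism that proves Proposition~\ref{p:realization}: check that the pentagon surface is smooth (it is covered by the five tori $\{a_ia_{i+1}\neq 0\}\cong \GG_m^2$, by the period-$5$ property of the recurrence $a_{i+1}=(a_i+1)/a_{i-1}$), check that it compactifies to a split quintic del Pezzo with boundary a split cycle of five $(-1)$-curves, conclude from Proposition~\ref{p:d5}, Lemma~\ref{l:basis}, Lemma~\ref{l:basis-2} and Corollary~\ref{c:simply-connected} that it is an ample log K3 surface of Picard rank $0$ with trivial characteristic class, and then apply Theorem~\ref{t:uniqueness}. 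That verification is work of the same order as the paper's Examples~\ref{e:d8} and~\ref{e:d8-sig}, so your route is not shorter; what it buys is the sharper structure of the exceptional set (five affine lines plus a finite set) and coordinates in which the $\DD_5$-symmetry of the problem is manifest.
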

%
%
We also give an example with the same $k=\QQ$ and $S=\{\infty\}$, in which $\alp_X \neq 0$, and where integral points are in fact Zariski dense. In particular, if Conjecture~\ref{c:pic-0} holds in this case then $b$ must be at least $1$. We would be very interested to understand what properties of $X$ control the value of $b$, when such a value exists.

Now consider Question (1) above, namely, the \textbf{existence} of integral points. The study of this question often begins by considering the set of $S$-integral adelic points 
$$ \X(\A_{k,S}) \x{\df}{=} \prod_{v \in S}X(k_v) \times \prod_{v \notin S} \X(\OO_v) $$ 
where $X = \X \otimes_{\OO_S} k$ is the base change of $\X$ to $k$. If $\X(\A_{k,S}) = \emptyset$ one may immediately deduce that $X$ has no $S$-integral points. In general, it can happen that $\X(\A_{k,S}) \neq \emptyset$ but $\X(\OO_S)$ is still empty. One way to account for this phenomenon is given by the integral version of the \textbf{Brauer-Manin obstruction}, introduced in~\cite{CTX09}. To this end one considers the set
$$ \X(\A_{k,S})^{\Br(X)} \x{\df}{=} \X(\A_{k,S}) \cap X(\A_k)^{\Br(X)} $$
given by intersecting the set of $S$-integral adelic points with the Brauer set of $X$. When $\X(\A_{k,S})^{\Br(X)} = \emptyset$ one says that there is a Brauer-Manin obstruction to the existence of $S$-integral points.

\begin{question}\label{q:BM}
Is there a natural class of $\OO_S$-schemes for which we should expect the integral Brauer-Manin obstruction to be the only one?
\end{question}

If $X$ is proper then the set of $S$-integral points on $\X$ can be identified with the set of \textbf{rational points} on $X$. In this case, the class of smooth and proper \textbf{rationally connected varieties} is conjectured to be a natural class where the answer to Question~\ref{q:BM} is positive (see~\cite{CT03}). If $X$ is not proper, then the question becomes considerably more subtle. One may replace the class of rationally connected varieties by the class of (smooth) \textbf{log rationally connected varieties} (see~\cite{Zh14}). However, even for log rationally connected varieties, the integral Brauer-Manin obstruction is not the only one in general. One possible problem is that log rationally connected varieties may have a non-trivial (yet always finite) fundamental group. In~\cite[Example 5.10]{CTW12} Colliot-Th\'el\`ene and Wittenberg consider a log rationally connected surface $\X$ over $\spec(\ZZ)$ with fundamental group $\ZZ/2$. It is then shown that the Brauer-Manin obstruction is trivial, but one can prove that integral points do not exist by applying the Brauer-Manin obstruction to the universal covering of $X$. This can be considered as a version of the \'etale-Brauer Manin obstruction for integral points. In~\cite[Example 5.9]{CTW12}, Colliot-Th\'el\`ene and Wittenberg consider another surface $\X$ over $\spec(\ZZ)$ given by the equation
\begin{equation}\label{e:CTW}
 2x^2 + 3y^2 + 4z^2 = 1 .
\end{equation}
This time $\X$ is simply-connected and log rationally connected, and yet $\X(\ZZ) = \emptyset$ with no integral Brauer-Manin obstruction. This unsatisfactory situation motivates the following definition:

\begin{define}\label{d:split}
Let $X$ be a smooth, geometrically integral variety over a field $k$ and let $X \subseteq \ovl{X}$ be a smooth compactification such that $D = \ovl{X} \bksl X$ is a simple normal crossing divisor. Let $\C_{\ovl{k}}(D)$ be the \textbf{geometric Clemens complex} of $D$ (see~\cite[\S 3.1.3]{CT10}). We will say that the compactification $(\ovl{X},D)$ is \textbf{split} (over $k$) if the Galois action on $\C_{\ovl{k}}(D)$ is trivial and for every point in $\C_{\ovl{k}}(D)$ the corresponding subvariety of $D$ has a $k$-point. We will say that $X$ itself is split if it admits a split compactification.
\end{define}

\begin{example}\label{ex:quadratic}
Let $k=\RR$ and let $f(x_1,...,x_n)$ be a non-degenerate quadratic form. The affine variety $X$ given by
\begin{equation}\label{e:f}
f(x_1,...,x_n) = 1 
\end{equation}
is split if and only if the equation $f(x_1,...,x_n) = 0$ is soluble in $k$. Indeed, if $f$ represents $0$ over $k$ then we have a split compactification $\ovl{X} \subseteq \PP^n$ given by $f(x_1,...,x_n) = y^2$. On the other hand, if $f$ does not represent $0$ then the space $X(k)$ is compact (with respect to the real topology) and hence $X$ cannot admit any split compactification. A similar argument works when $k$ is any local field.
\end{example}

\begin{define}\label{d:S-split}
Let $\X$ be an $\OO_S$-scheme. We will say that $\X$ is \textbf{$S$-split} if $\X \otimes_{\OO_v} k_v$ is split over $k_v$ for at least one $v \in S$.
\end{define}

The class of $S$-split schemes appears to be more well-behaved with respect to $S$-integral points. As example~\ref{ex:quadratic} shows, counter-example~\ref{e:CTW} is indeed not $S$-split. 

\begin{question}\label{q:BM-S}
Is the integral Brauer-Manin obstruction the only obstruction for smooth, $S$-split, simply-connected $\OO_S$-schemes?
\end{question}

Our last goal in this paper is to show that the answer to this question is negative. More precisely:
\begin{thm}[See \S\ref{ss:BM}]\label{t:counter}
Let $S = \{\infty\}$. Then there exists an $S$-split log K3 surface over $\ZZ$ for which $X(\ZZ) = \emptyset$ and yet the integral Brauer-Manin obstruction is trivial. 
\end{thm}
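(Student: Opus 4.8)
The plan is to build the counterexample from the classification of Theorem~\ref{t:main-class} together with the cluster-algebraic description of a quintic del Pezzo surface minus an anticanonical pentagon. Over $\ovl\QQ$ this open surface is the cluster variety of type $A_2$: there are five invertible regular functions $x_1,\dots,x_5$, indexed cyclically, whose zero divisors are the five components of $D$, and which satisfy the period-five exchange relations $x_{i-1}x_{i+1}=x_i+1$. A class $\alp\in H^1(\QQ,\DD_5)$ twists this picture by letting the absolute Galois group permute the $x_i$ through the corresponding homomorphism to $\DD_5\subset S_5$; the Galois-fixed data descends to a $\QQ$-form $X_\alp$, and I would take $\X$ to be the associated integral model of the twisted cluster variety over $\ZZ$. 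I would choose $\alp$ so that its restriction to $H^1(\RR,\DD_5)$ is trivial, which by Definition~\ref{d:split} makes $X_\alp$ split over $\RR$ and hence $S$-split for $S=\{\infty\}$; concretely this forces the \'etale algebra $k_\alp$ cut out by $\alp$ to be totally real.

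The heart of the matter is the non-existence of integral points, and it is governed entirely by the exchange relations. An integral point $P\in\X(\ZZ)$ avoids every boundary component at every finite prime, so each $x_i(P)$ is a unit; after the twist these values are the Galois conjugates of a single unit $u\in\OO_{k_\alp}^\times$, and the relation $x_{i-1}x_{i+1}=x_i+1$ reduces to a single $S$-unit equation of the shape $u+1=(\text{unit})$ in $k_\alp$. I would therefore select $\alp$ precisely so that this twisted $S$-unit equation has \emph{no} solution in the global units $\OO_{k_\alp}^\times$, which forces $\X(\ZZ)=\emptyset$. I expect this to be the main obstacle: one must exhibit a single $\alp$ for which the $S$-unit equation is everywhere locally soluble yet globally insoluble, and the finiteness theory of $S$-unit equations, rather than any formal device, is what makes such a choice possible but delicate to pin down.

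Local solubility must be arranged separately, and this is exactly where the twist is indispensable. For the untwisted surface $S=\{\infty\}$ gives $\OO_S^\times=\{\pm1\}$ and the parity of $x_i+1$ already obstructs solutions over $\ZZ_2$, so the naive model fails the local test at $2$. After twisting by a suitable totally real $k_\alp$, the relevant local unit groups $(\OO_{k_\alp}\otimes\ZZ_p)^\times$ are large enough to solve the $p$-adic exchange relations for every $p$, including $p=2$, giving $\X(\ZZ_p)\neq\emptyset$; at the finitely many primes dividing the discriminant of the chosen model I would check solubility directly, and over $\RR$ real five-cycles of the Lyness map provide $X(\RR)\neq\emptyset$. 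Together these yield $\X(\A_{\QQ,S})\neq\emptyset$, so that the failure is genuinely of local-to-global type.

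Finally I would verify that the integral Brauer--Manin set is non-empty. Since $\Pic(X\otimes_\QQ\ovl\QQ)=0$ we have $H^1(\QQ,\Pic(X_{\ovl\QQ}))=0$, so there are no non-constant algebraic Brauer classes and the algebraic Brauer--Manin obstruction vanishes automatically. For the transcendental part I would compute $\Br(X_{\ovl\QQ})$ from the localization sequence along $D$: each component $D_i\cong\PP^1$ admits residues ramified only at the two nodes it meets, and the cyclic combinatorics of the pentagon pins $\Br(X_{\ovl\QQ})$ down to at most a single class arising from the loop in the dual graph. It then remains to check that this finite collection of classes evaluates to zero on the adelic point constructed above, giving $\X(\A_{\QQ,S})^{\Br(X)}=\X(\A_{\QQ,S})\neq\emptyset$. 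The conceptual point is that the genuine obstruction is the insolubility of a twisted $S$-unit equation --- a non-abelian, descent-type phenomenon that by construction lies outside the reach of $\Br(X)$.
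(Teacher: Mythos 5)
Your reduction of integral points to an $S$-unit equation rests on a geometric premise that is false for this surface, and this breaks the whole argument. You posit five \emph{invertible} regular functions $x_1,\dots,x_5$ on $X_{\ovl\QQ}$ ``whose zero divisors are the five components of $D$''. No such functions exist: by Lemma~\ref{l:basis}, Picard rank $0$ is equivalent to the components of $D$ forming a \emph{basis} of $\Pic(\ovl{X}_{\ovl{k}})$, and (by the exact sequence in its proof) also forces $\ovl{k}^*[X]=\ovl{k}^*$, i.e.\ $X_{\ovl k}$ has no nonconstant units at all; a rational function whose divisor is supported on $D$ would give a linear relation among the $[D_i]$. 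The genuine $A_2$ cluster variables on the pentagon complement are regular but \emph{not} invertible: the zero locus of $x_i$ is the trace on $X$ of one of the five complementary $(-1)$-curves $D_j'$ of Proposition~\ref{p:d5}, an affine line inside $X$. Consequently an integral point of $\X$ is in no way forced to take unit values, and the passage from $\X(\ZZ)$ to solutions of $u+1=(\mathrm{unit})$ in $\OO_{k_\alp}^\times$ collapses. Concretely, on the untwisted Lyness model over $\ZZ$ cut out by $x_{i-1}x_{i+1}=x_i+1$, the tuples $(x_1,\dots,x_5)=(0,-1,t,-(1+t),-1)$ are integral points for every $t\in\ZZ$; this also refutes your claim that the untwisted surface is obstructed over $\ZZ_2$ by parity of $x_i+1$. (The surface on which integral points genuinely produce the classical unit equation is the complement of all \emph{ten} $(-1)$-curves, i.e.\ $M_{0,5}$, whose boundary is $-2K$; that is not a log K3 surface and not the object of the theorem.)

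Even if one set this aside, the proposal is a program rather than a proof: you never exhibit a class $\alp$, nor prove that one exists meeting the three simultaneous constraints (everywhere local solubility, global insolubility, and vanishing of the Brauer evaluation on some adelic point), and you leave the transcendental Brauer computation as ``it remains to check''. The paper's route is entirely different and much more elementary: it takes the \emph{untwisted} case $\alp_X=0$ (so $S$-splitness is automatic), realized by the explicit affine scheme $((11x+5)y+3)z=3x+1$ over $\ZZ$; emptiness of $\X(\ZZ)$ follows from the archimedean inequality $|11x+5|>|3x+1|+3$ combined with a congruence mod $3$, not from any unit-equation finiteness; the group $\Br(X)/\Br(\QQ)\cong\ZZ/2$ is computed via compactly supported cohomology and Hochschild--Serre, an explicit quaternion generator $A$ is written down, and triviality of the obstruction is shown by proving the evaluation map $\ev_A\colon \X(\ZZ_q)\to\ZZ/2$ is surjective at a prime $q$ dividing $c$ and $m$ exactly once. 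If you want to salvage your approach, you must first identify which regular functions on a Picard-rank-$0$ ample log K3 surface actually control integrality --- and the paper's point is precisely that, in the absence of units, the failure of the Hasse principle here is \emph{not} explained by any such descent-type device currently available.
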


To the knowledge of the author this is the first example of a smooth, $S$-split and simply-connected $\OO_S$-scheme for which the Brauer-Manin obstruction is shown to be insufficient.

Despite Theorem~\ref{t:counter}, we do believe that the answer to Question~\ref{q:BM-S} is positive when one restricts attention to log rationally connected schemes. More specifically, we offer the following integral analogue of the conjecture of Colliot-Th\'el\`ene and Sansuc (see~\cite{CT03}):
\begin{conj}\label{c:rationally-connected}
Let $\X$ be an $S$-split smooth $\OO_S$-scheme such that $X = \X \otimes_{\OO_S} k$ is a simply connected, log rationally connected variety. If $\X(\A_{k,S})^{\Br(X)} \neq \emptyset$ then $X(\OO_S) \neq \emptyset$.
\end{conj}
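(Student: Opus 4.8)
The plan is to prove Conjecture~\ref{c:rationally-connected} by reformulating it as a \emph{strong approximation with Brauer--Manin obstruction} statement and then attacking the latter by a combination of descent and the fibration method. Concretely, given an adelic point $(P_v) \in \X(\A_{k,S})^{\Br(X)}$, the goal is to produce a global point in $\X(\OO_S)$, and the natural strengthening is to show that $\X(\OO_S)$ is dense in $\X(\A_{k,S})^{\Br(X)}$ for the $S$-integral adelic topology. The first step would be to organise the roles of the places: the $v \in S$, where we only require $k_v$-points rather than $\OO_v$-points, play the part of the ``place at infinity'' in classical strong approximation, and it is precisely here that the $S$-split hypothesis (Definition~\ref{d:S-split}) should enter. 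Fixing a place $v_0 \in S$ at which $X$ admits a split compactification $(\ovl{X}, D)$ over $k_{v_0}$, the triviality of the Galois action on the Clemens complex together with the existence of $k_{v_0}$-points on every boundary stratum provides the geometric ``room'' needed to push an approximate solution off the boundary divisor $D = \ovl{X} \bksl X$; this is the integral-point analogue of having a non-compact local locus, exactly as in Example~\ref{ex:quadratic}.

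The second, and central, step is to reduce the Brauer--Manin condition by descent. Since $X$ is simply connected, it has no nontrivial connected finite \'etale covers, so the \'etale refinement used in~\cite[Example 5.10]{CTW12} cannot intervene and one expects the Brauer group itself to capture the full obstruction. The idea is to split the obstruction into its algebraic part, governed by torsors under tori built from $\Pic(\ovl{X} \otimes \ovl{k})$ and from the units supported on $D$, and its transcendental part. One then passes to a universal torsor $f \colon \mathcal{T} \to \X$ under a quasi-trivial torus; by the descent formalism for open varieties (following the work of Cao, Xu and Wei on strong approximation and descent) the adelic points of $\X$ orthogonal to the algebraic Brauer group lift to adelic points of some twist $\mathcal{T}^\sigma$ on which the algebraic obstruction vanishes. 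This should reduce the conjecture to $S$-integral strong approximation, modulo the transcendental Brauer group, on the total spaces of these torsors.

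The third step is the fibration method. Using that $X$ is log rationally connected, one would fibre it over a rational base with lower-dimensional log rationally connected fibres, arrange the base case so that the fibres are homogeneous spaces or toric-type varieties for which strong approximation with Brauer--Manin obstruction is already available, and glue the fibrewise solutions using Harari's formalism for the behaviour of the Brauer group in a fibration. Crucially, the split place $v_0$ is kept free throughout, while the approximation at the remaining places of $S$ is used to match the fibrewise data over the bad fibres lying above the boundary of the base.

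The hard part will be the interaction between the fibration method and the integral structure along $D$: unlike the proper rational-point setting, one must control how the chosen integral model degenerates along each stratum of the Clemens complex and ensure that the $k_{v_0}$-points guaranteed by splitness actually lift to $S$-integral solutions after clearing the local obstructions at the places outside $S$. Since the rational-point analogue---the conjecture of Colliot-Th\'el\`ene and Sansuc, see~\cite{CT03}---is itself open, an unconditional proof in full generality is out of reach; a realistic target is to establish the conjecture conditionally on that analogue for the proper fibres, or unconditionally for distinguished classes such as homogeneous spaces and affine quadrics, where the required strong approximation inputs are known. That Theorem~\ref{t:counter} yields a genuine counterexample once the log rational connectedness hypothesis is dropped confirms that this hypothesis, and not merely $S$-splitness and simple-connectedness, is doing essential work, and so must be used in an essential way by any complete proof.
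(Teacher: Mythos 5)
This statement is a \emph{conjecture} in the paper: the author explicitly offers it as an open integral analogue of the conjecture of Colliot-Th\'el\`ene and Sansuc, and the paper contains no proof of it whatsoever (indeed, the paper's main contribution in this direction, Theorem~\ref{t:counter}, is a \emph{counterexample} to the neighbouring Question~\ref{q:BM-S} when log rational connectedness is dropped). So there is no argument in the paper to compare yours against, and the relevant question is whether your text constitutes a proof. It does not, and you concede as much in your final paragraph, where you say an unconditional proof ``is out of reach'' and retreat to a conditional or special-case target. A proof proposal that ends by declaring the statement unprovable by its own method has not closed the argument; what you have written is a research program, not a proof.

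Beyond that global issue, each of your three steps has a concrete gap. First, the strengthening to strong approximation (density of $\X(\OO_S)$ in $\X(\A_{k,S})^{\Br(X)}$) is strictly stronger than the conjecture and is not known to follow from, nor to be equivalent to, the existence statement; you would need to justify that the strengthened form is even plausible in this generality before building on it. Second, the descent step invokes a ``descent formalism for open varieties'' lifting integral adelic points orthogonal to the algebraic Brauer group to twists of universal torsors; the results of Cao, Wei and Xu you allude to are established for specific classes (groups of multiplicative type, toric varieties, certain homogeneous spaces), not for arbitrary simply connected log rationally connected varieties, and in the integral setting the interaction of the torsor with the model $\X$ and with the boundary $D$ at the places outside $S$ is precisely the unresolved difficulty, not a formality. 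Third, the fibration step presupposes that a log rationally connected surface or variety fibres over a rational base with fibres of homogeneous or toric type for which integral strong approximation is known; no such structure theorem exists, and the log K3 surfaces of this very paper illustrate how an affine surface can fail to carry any useful fibration of this kind. Since the rational-point analogue for the fibres is itself open, even your conditional fallback does not yield the statement as asserted. In short: the approach is a reasonable heuristic roadmap consistent with how the paper frames the problem, but every load-bearing step is missing, and the statement remains, as in the paper, a conjecture.
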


\section{Preliminaries}

Let $k$ be a field of characteristic $0$. We will denote by $\ovl{k}$ a fixed algebraic closure of $k$ and by $\Gam_k = \Gal(\ovl{k}/k)$ the absolute Galois group of $k$. Given a variety $X$ over $k$ and a field extension $L/k$, we will denote by $X_L = X \otimes_k L$ the base change of $X$ to $L$. Let us begin with some basic definitions.

\begin{define}
Let $Z$ be a separated scheme of finite type over $k$. By a \textbf{geometric component of $Z$} we mean an irreducible component of $Z_{\ovl{k}}$. The set $C(Z)$ of geometric components of $Z$ is equipped with a natural action of $\Gam_k$ and we will often consider it as a Galois set. If $Z \subset Y$ is an effective divisor of smooth variety $Y$ then we will say that $Z$ has \textbf{simple normal crossing} if each geometric component of $Z$ is smooth and for every subset $I \subseteq C(Z)$ the intersection of $\{D_i\}_{i \in I}$ is either empty or pure of dimension $\dim(Z) - |I| + 1$ and transverse at every point. If $\dim(Z) = 1$ this means that each component of $Z$ is a smooth curve and each intersection point is a transverse intersection of exactly two components.
\end{define}

\begin{define}\label{d:dual}
Let $Y$ be a smooth surface over $k$ and $D \subseteq Y$ a simple normal crossing divisor. The \textbf{dual graph} of $D$, denote $G(D)$, is the graph whose vertices are the geometric components of $D$ and whose edges are the intersection points of the various components (since $D$ is a simple normal crossing divisor no self intersections or triple intersections are allowed). Note that two distinct vertices may be connected by more than one edge. We define the \textbf{splitting field} of $D$ is the minimal extension $L/k$ such that the action of $\Gam_L$ on $G(D)$ is trivial. In other words, the minimal extension over which all components and all intersection points are defined.
\end{define}
%

\begin{define}\label{d:normal}
Let $X$ be a smooth geometrically integral surface over $k$. A \textbf{simple compactification} of $X$ is a smooth compactification $\iota: X \hrar \ovl{X}$ (defined over $k$) such that $D = \ovl{X} \bksl X$ is a simple normal crossing divisor. 
\end{define}

\begin{define}\label{d:k3}
Let $X$ be a smooth geometrically integral surface over $k$. A \textbf{log K3 structure} on $X$ is a simple compactification $(X,D,\iota)$ such that $$[D] = -K_{\ovl{X}} $$ (where $K_{\ovl{X}} \in \Pic(X)$ is the canonical class of $X$). A \textbf{log K3 surface} is a smooth, geometrically integral, \textbf{simply connected} surface $X$ equipped with a log K3 structure $(\ovl{X},D, \iota)$.
\end{define}

\begin{rem}
The property of being simply connected in Definition~\ref{d:k3} is intended in the geometric since, i.e., the \'etale fundamental group of the base change $X_{\ovl{k}}$ is trivial.
\end{rem}

\begin{rem}
Let $X$ be a log K3 surface. Then it is not true that every simple compactification $\ovl{X}$ of $X$ is a log K3 structure. However, every simple compactification $(X,D,\iota)$ satisfies the weaker property that $\dim H^0(X,n([D]+K_X)) = 1$ for all $n \geq 0$. This latter property is sometimes taken instead of $[D] = -K_{\ovl{X}} = 0$ in the definition of a log K3 surface.
\end{rem}

\begin{prop}\label{p:rational}
Let $X$ be a log K3 surface and $(\ovl{X},D,\iota)$ a log K3 structure on $X$. Then either $D = \emptyset$ and $X = \ovl{X}$ is a (proper) K3 surface or $D \neq \emptyset$ and $\ovl{X}_{\ovl{k}}$ is a rational surface.
\end{prop}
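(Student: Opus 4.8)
The plan is to split into the two cases according to whether the compactifying divisor $D$ is empty, and in the nontrivial case to exploit the fact that $-K_{\ovl{X}} = [D]$ is effective to pin down the Kodaira dimension, after which the (geometric) simple-connectivity of $X$ upgrades ``ruled'' to ``rational''.

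First I would dispose of the case $D = \emptyset$. Here the log K3 condition $[D] = -K_{\ovl{X}}$ reads $K_{\ovl{X}} = 0$ in $\Pic(\ovl{X})$, so $X = \ovl{X}$ is a smooth proper surface with trivial canonical class. I would then invoke the Enriques--Kodaira classification of surfaces with $K = 0$ over $\ovl{k}$: such a surface is abelian, bielliptic, Enriques, or K3. The first three have nontrivial étale fundamental group (infinite in the abelian and bielliptic cases, $\ZZ/2$ in the Enriques case), so the assumption that $X_{\ovl{k}}$ is simply connected singles out the K3 case, giving the first alternative.

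Now suppose $D \neq \emptyset$, so that $-K_{\ovl{X}} = [D]$ is a \emph{nonzero} effective divisor. The first step is to observe that this forces all plurigenera to vanish: for $n \geq 1$ we have $nK_{\ovl{X}} = -n[D]$, and since $n[D]$ is nonzero and effective, $-n[D]$ cannot be linearly equivalent to an effective divisor (if a divisor and its negative are both effective, the divisor is zero). Hence $h^0(\ovl{X}_{\ovl{k}}, nK_{\ovl{X}}) = 0$ for every $n \geq 1$, so $\ovl{X}_{\ovl{k}}$ has Kodaira dimension $-\infty$. By the classification of surfaces over an algebraically closed field of characteristic $0$, such a surface is birationally ruled, i.e. birational to $C \times \PP^1$ for a smooth projective curve $C$.

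To conclude rationality I would transfer simple-connectivity from $X$ to $\ovl{X}$. Since $X_{\ovl{k}} \subseteq \ovl{X}_{\ovl{k}}$ is a dense open subset of a smooth variety, the induced map $\pi_1^{\et}(X_{\ovl{k}}) \to \pi_1^{\et}(\ovl{X}_{\ovl{k}})$ is surjective; as the source is trivial, $\ovl{X}_{\ovl{k}}$ is simply connected as well. But a surface birationally ruled over $C$ has étale fundamental group isomorphic to that of $C$ (the fundamental group being a birational invariant of smooth projective surfaces in characteristic $0$, and $C \times \PP^1$ carrying the fundamental group of $C$). Simple-connectivity therefore forces $g(C) = 0$, so $C = \PP^1$ and $\ovl{X}_{\ovl{k}}$ is rational. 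Alternatively one may finish through Castelnuovo's criterion: simple-connectivity gives $b_1(\ovl{X}_{\ovl{k}}) = 0$, hence the irregularity $q = 0$, and together with the vanishing of $P_2$ this yields rationality directly. I expect the main obstacle to be precisely this last passage from Kodaira dimension $-\infty$ to rationality, whose crux is that simple-connectivity of the open surface $X$ transfers to its compactification and thereby rules out irrational ruled surfaces.
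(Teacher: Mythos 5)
Your proof is correct, and its skeleton matches the paper's: the case $D = \emptyset$ is disposed of first, and for $D \neq \emptyset$ both arguments deduce Kodaira dimension $-\infty$ from the effectivity of $-K_{\ovl{X}} = [D]$ and then use simple connectivity of $X$ to upgrade this to rationality. Where you genuinely differ is in the crux step, transferring simple connectivity from the open surface to its compactification. The paper argues cohomologically: by Castelnuovo's criterion it suffices that $H^1(\ovl{X}_{\ovl{k}},\ZZ/n)=0$ for all $n$, which it deduces from injectivity of the restriction map $H^1(\ovl{X}_{\ovl{k}},\ZZ/n) \lrar H^1(X_{\ovl{k}},\ZZ/n)$, itself a consequence of cohomological purity ($H^1_D(\ovl{X}_{\ovl{k}},\ZZ/n)=0$ since $D$ has codimension $1$). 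You instead invoke the group-theoretic fact (SGA 1) that $\pi_1^{\et}$ of a dense open subset of a smooth variety surjects onto $\pi_1^{\et}$ of the whole variety, conclude that $\ovl{X}_{\ovl{k}}$ itself is simply connected, and then finish either via the classification of birationally ruled surfaces together with birational invariance of $\pi_1$, or via Castelnuovo exactly as in the paper. Both routes are sound; yours uses a stronger but entirely standard input and avoids purity, while the paper's argument needs only the $H^1$-shadow of simple connectivity and is slightly more economical. Two cosmetic points: your appeal to the full Enriques--Kodaira classification when $D=\emptyset$ is heavier than needed, since with the paper's definition of a K3 surface (simply connected, trivial canonical class) that case is immediate; and the assertion ``a divisor and its negative both effective implies the divisor is zero'' should be phrased for linear equivalence classes, e.g.\ by intersecting with an ample class, since it is the class $nK_{\ovl{X}}$, not a fixed divisor, whose effectivity is at issue.
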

\begin{proof}
If $D = \emptyset$ then $X = \ovl{X}$ is proper and $K_{X} = K_{\ovl{X}} = 0$. Since $X$ is also simply connected it is a K3 surface. Now assume that $D \neq \emptyset$. Since $-K_X = [D]$ is effective and $\ovl{X}$ is smooth and proper it follows that $H^0(\ovl{X},mK_{\ovl{X}}) = 0$ for every $m \geq 1$, and hence $\ovl{X}$ has Kodaira dimension $-\infty$. In light of Castelnuevo's rationality criterion it will suffice to prove that $H^1(\ovl{X}_{\ovl{k}},\ZZ/n) = 0$ for every $n$. Since $X$ is assumed to be simply connected it will be enough to show that the map $H^1(\ovl{X}_{\ovl{k}},\ZZ/n) \lrar H^1(X_{\ovl{k}},\ZZ/n)$ is injective. This, in turn, follows from the fact that $H^1_{D}(\ovl{X}_{\ovl{k}},\ZZ/n) = 0$ by the purity theorem.
\end{proof}

Proposition~\ref{p:rational} motivates the following definition:
\begin{define}
We will say that a log K3 structure $(\ovl{X},D,\iota)$ is \textbf{ample} if $[D] \in \Pic(\ovl{X})$ is \textbf{ample}. Note that in this case $\ovl{X}$ is a \textbf{del Pezzo surface}. We will say that a log K3 surface is \textbf{ample} if it admits an ample log K3 structure.
\end{define}

\begin{rem}
Any ample log K3 surface is affine, as it is the complement of an ample divisor. This observation can be used to show that not all (non-proper) log K3 surfaces are ample. For example, if $\ovl{X} \lrar \PP^1$ is a rational elliptic surface (with a section) and $D \subseteq \ovl{X}$ is the fiber over $\infty \in \PP^1(k)$ then $X = \ovl{X} \bksl D$ is a log K3 surface admitting a fibration $f:X \lrar \A^1$ into elliptic curves. As a result, any regular function on $X$ is constant along the fibers of $f$ and hence $X$ cannot be affine.
\end{rem}

Now let $X$ be a log K3 surface. Since $X_{\ovl{k}}$ is simply connected it follows that $k^*[X] = k^*$ and that $\Pic(X_{\ovl{k}})$ is torsion free, hence isomorphic to $\ZZ^r$ for some $r$. The integer $r$ is called the \textbf{Picard rank} of $X$. We note that this invariant is sometimes called the \textbf{geometric} Picard number, but since it will be the only Picard rank under consideration, we opted to drop the adjective ``geometric''. The following observation will be useful in identifying log K3 surfaces of Picard rank $0$:
\begin{lem}\label{l:basis}
Let $X$ be a smooth, geometrically integral variety over $k$ and $(\ovl{X},D,\iota)$ a simple compactification of $X$. Then the following conditions are equivalent:
\begin{enumerate}
\item
$\Pic(X_{\ovl{k}}) = 0$ and $\ovl{k}^*[X] = \ovl{k}^*$.
\item
The geometric components of $D$ form a basis for $\Pic(\ovl{X}_{\ovl{k}})$.
\end{enumerate}
\end{lem}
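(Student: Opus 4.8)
The plan is to deduce both implications from a single four-term exact sequence relating the units and Picard groups of $X_{\ovl{k}}$ and $\ovl{X}_{\ovl{k}}$. Write $Y = \ovl{X}_{\ovl{k}}$ and $U = X_{\ovl{k}}$, and let $D_1,\dots,D_n$ be the geometric components of $D$. Since $\ovl{X}$ is a smooth proper compactification of the geometrically integral variety $X$, the scheme $Y$ is smooth, proper and integral over the algebraically closed field $\ovl{k}$; in particular every invertible global regular function on $Y$ is constant, i.e.\ $\Gamma(Y,\OO_Y^*) = \ovl{k}^*$, and smoothness lets me work freely with Weil divisors and with the order-of-vanishing homomorphisms $\ord_{D_i}\colon \ovl{k}(Y)^* \to \ZZ$.

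First I would establish the localization sequence
$$ 0 \lrar \ovl{k}^*[X]/\ovl{k}^* \lrar \bigoplus_{i=1}^n \ZZ \cdot [D_i] \x{\alpha}{\lrar} \Pic(Y) \lrar \Pic(U) \lrar 0 , $$
where $\alpha$ sends the $i$-th generator to the class $[D_i]$. The surjectivity of $\Pic(Y) \to \Pic(U)$ together with exactness at $\Pic(Y)$ is the standard excision sequence for the open immersion $U \hrar Y$: any line bundle on $U$ extends to $Y$ (take the closure of a divisor representing it), and two extensions differ by a divisor supported on $D$. The left end is the computation of $\Ker(\alpha)$: a combination $\sum a_i [D_i]$ dies in $\Pic(Y)$ exactly when $\sum a_i D_i = \divisor(f)$ for some $f \in \ovl{k}(Y)^*$; such an $f$ has neither zeros nor poles on $U$, hence lies in $\ovl{k}^*[X]$, and conversely $f \mapsto \sum_i \ord_{D_i}(f)\,[D_i]$ realizes $\ovl{k}^*[X]$ as an extension of $\Ker(\alpha)$ by $\Gamma(Y,\OO_Y^*) = \ovl{k}^*$.

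With this sequence in hand both implications are formal. Condition (1) says precisely that the two outer terms $\ovl{k}^*[X]/\ovl{k}^*$ and $\Pic(U) = \Pic(X_{\ovl{k}})$ vanish; exactness then forces $\alpha$ to be an isomorphism, which is exactly the assertion that the classes $[D_i]$ form a basis of $\Pic(Y)$, i.e.\ condition (2). Conversely, if the $[D_i]$ form a basis then $\alpha$ is an isomorphism; its injectivity gives $\ovl{k}^*[X]/\ovl{k}^* = \Ker(\alpha) = 0$, hence $\ovl{k}^*[X] = \ovl{k}^*$, and its surjectivity gives $\Pic(X_{\ovl{k}}) = \Coker(\alpha) = 0$, which together are condition (1). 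Note that condition (2) implicitly records that $\Pic(Y)$ is free of rank $n$, consistent with $\alpha$ being an isomorphism out of $\ZZ^n$.

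The only genuine content is the exactness of this four-term sequence, and within it the identification of $\Ker(\alpha)$ with the non-constant units of $X_{\ovl{k}}$; the hard part is really just justifying why a function $f$ whose divisor is supported on $D$ must be a unit on $U$, and why the remaining ambiguity is exactly the constants, both of which rest on the smoothness of $Y$ and on $Y$ being proper and integral. Everything after that is a diagram chase matching vanishing of the outer terms against $\alpha$ being an isomorphism.
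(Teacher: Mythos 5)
Your proof is correct and is essentially the paper's own argument: both hinge on the exact localization sequence $0 \to \ovl{k}^*[X]/\ovl{k}^* \to \bigoplus_i \ZZ\cdot[D_i] \to \Pic(\ovl{X}_{\ovl{k}}) \to \Pic(X_{\ovl{k}}) \to 0$ (the paper writes the five-term version with $\ovl{k}^*[\ovl{X}]=\ovl{k}^*$ on the left, identifying the middle term as $H^1_D(\ovl{X}_{\ovl{k}},\GG_m)$), followed by the same formal observation that vanishing of the two outer terms is equivalent to the middle map being an isomorphism. The only difference is presentational: you justify the sequence by explicit divisor theory on the smooth proper integral surface, whereas the paper invokes cohomology with support in $D$.
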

\begin{proof}
Let $V$ be the free abelian group generated by the geometric components of $D$. We may then identify $H^1_D(Y,\GG_m)$ with $V$,  yielding an exact sequence
$$ 0 \lrar \ovl{k}^*[\ovl{X}] \lrar \ovl{k}^*[X] \lrar V \lrar \Pic(\ovl{X}_{\ovl{k}}) \lrar \Pic(X_{\ovl{k}}) \lrar 0 .$$
Since $\ovl{X}$ is proper we have $\ovl{k}^*[\ovl{X}] = \ovl{k}^*$. We hence see that condition (1) above is equivalent to the map $V \lrar \Pic(\ovl{X}_{\ovl{k}})$ being an isomorphism.
\end{proof}

\begin{rem}\label{r:rigid}
Let $X$ be a rational surface and $D \subseteq X$ a simple normal crossing divisor such that $[D] = -K_X$ and such that the components of $D$ are of genus $0$ and form a basis for $\Pic(X_{\ovl{k}})$. By~\cite[Lemma 3.2]{Fr15} the pair $(X,D)$ is \textbf{rigid}, i.e., does not posses any first order deformations (nor any continuous families of automorphisms). By Lemma~\ref{l:basis} we may hence expect that any type of moduli space of log K3 surfaces of Picard rank $0$ will be $0$-dimensional. In particular, the classification problem for such surfaces over a non-algebraically closed field naturally leads to various Galois cohomology sets with coefficients in finite groups.
\end{rem}

We shall now describe two examples of ample log K3 surfaces with Picard rank $0$.
\begin{example}\label{e:d8}
Let $\ovl{X}$ be the blow-up of $\PP^2$ at a point $P \in \PP^2(k)$. Let $C \subseteq \PP^2$ be a quadric passing through $P$ and let $L \subseteq \PP^2$ be a line which does not contain $P$ and meets $C$ at two distinct points $Q_0,Q_1 \in \PP^2(k)$, both defined over $k$. Let $\wtl{C}$ be the strict transform of $C$ in $\ovl{X}$. Then $D = \wtl{C} \cup L$ is a simple normal crossing divisor, and it is straightforward to check that $[D] = -K_{\ovl{X}}$. As we will see in Remark~\ref{r:simply-connected} below, the smooth variety $X = \ovl{X} \bksl D$ is simply connected, and so $X$ is an ample log K3 surface. Since $[L]$ and $[\wtl{C}]$ form a basis for $\Pic(\ovl{X}_{\ovl{k}})$, Lemma~\ref{l:basis} implies that $X$ has Picard rank $0$.

To construct explicit equations for $X$, let $x,y,z$ be projective coordinates on $\PP^2$ such that $L$ is given by $z=0$. Let $f(x,y,z)$ be a quadratic form vanishing on $C$ and let $g(x,y,z)$ be a linear form such that the line $g = 0$ passes through $P$ and $Q_1$. Then $X$ is isomorphic to the affine variety given by the equation
$$ f(x,y,1)t = g(x,y,1) .$$
By a linear change of coordinates we may assume that $Q_0 = (1,0,0)$ and $Q_1 = (0,1,0)$, in which case the equation above can be written as
$$ (axy + bx + cy + d)t = ex + f .$$
For some $a,b,c,d,e,f \in k$. We will later see that the $k$-isomorphism type of this surface does not depend, in fact, on any of these parameters.
\end{example}

\begin{example}\label{e:d7}
Let $L = k(\sqrt{a})$ be a quadratic extension of $k$. Let $P_0,P_1 \in \PP^2(L)$ a $\Gal(L/k)$-conjugate pair of points and let $\ovl{X}$ be the blow-up of $\PP^2$ at $P_0$ and $P_1$. Let $L \subseteq \PP^2$ be a line defined over $k$ which does not meet $\{P_0,P_1\}$ and let $L_1,L_2 \subseteq \PP^2$ be a $\Gal(L/k)$-conjugate pair of lines such that $L_1$ contains $P_1$ but not $P_2$ and $L_2$ contains $P_2$ but not $P_1$. Assume that the intersection point of $L_1$ and $L_2$ is not contained in $L$. Let $\wtl{L}_1,\wtl{L}_2$ be the strict transforms of $L_1$ and $L_2$ in $\ovl{X}$. Then $D = L \cup \wtl{L}_1 \cup \wtl{L}_2$ is a simple normal crossing divisor, and it is straightforward to check that $[D] = -K_{\ovl{X}}$. As we will see in Remark~\ref{r:simply-connected} below the smooth variety $X = \ovl{X} \bksl D$ is simply connected and so $X$ is an ample log K3 surface. Since $[L],[\wtl{L}_1]$ and $[\wtl{L}_2]$ form a basis for $\Pic(\ovl{X}_{\ovl{k}})$, Lemma~\ref{l:basis} implies that $X$ has Picard rank $0$.

To construct explicit equations for $X$, let $x,y,z$ be projective coordinates on $\PP^2$ such that $L$ is given by $z=0$. Let $f_1(x,y,z)$ and $f_2(x,y,z)$ be linear forms defined over $L$ which vanish on $L_1$ and $L_2$ respectively. Let $g(x,y,z)$ be a linear form defined over $k$ such that the line $g = 0$ passes through $P_1$ and $P_2$. Then $X$ is isomorphic to the affine variety given by the equation
$$ f_1(x,y,1)f_2(x,y,1)t = g(x,y,1) .$$
By a linear change of variables (over $k$) we may assume that the intersection point of $L_1$ and $L_2$ is $(0,0,1)$ and that $f(x,y) = x + \sqrt{a}y$, in which case the equation above becomes 
$$ (x^2 - ay^2)t = bx + cy + d .$$
We will later see that the $k$-isomorphism type of this surface only depends on the quadratic extension $L/k$, i.e., only on the class of $a$ mod squares.
\end{example}

\section{Geometry of ample log K3 surfaces with Picard rank $0$}
Let $k$ be a field of characteristic $0$. The goal of this section is to classify all ample log K3 surfaces $X/k$ of Picard rank is $0$.

\subsection{The category of log K3 structures}

\begin{define}
Let $X$ be a log K3 surface. We will denote by $\Log(X)$ the category whose objects are log K3 structures $(\ovl{X},D,\iota)$ and whose morphisms are maps of pairs $f:(\ovl{X},D) \lrar (\ovl{X}',D')$ which respect the embedding of $X$.
\end{define}

Between every two objects of $\Log(X)$ there is at most one morphism. We may hence think of $\Log(X)$ as a \textbf{partially ordered set} (poset) where the various log K3 structures are ordered by dominance. The goal of this section is to show that $\Log(X)$ is in fact a \textbf{cofiltered} poset, i.e., every two objects are dominated by a common third (see Corollary~\ref{c:cofiltered} below). In that sense the choice of a log K3 structure on a given log K3 surface is ``almost unique''. We begin with the following well-known statement (see the introduction section in~\cite{Fr15}), for which we include a short proof for the convenience of the reader.
\begin{prop}\label{p:cycle}
Let $\ovl{X}$ be a smooth, projective, (geometrically) rational surface over $k$ and $D \subseteq \ovl{X}$ a simple normal crossing divisor such that $[D]=-K_{\ovl{X}}$. Then one of the following option occurs:
\begin{enumerate}
\item
$D$ is a geometrically irreducible smooth curve of genus $1$.
\item
The geometric components of $D$ are all of genus $0$, and the dual graph of $D$ is a circle containing at least two vertices.
\end{enumerate}
\end{prop}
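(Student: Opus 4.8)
The plan is to work over $\ovl{k}$, since the number of geometric components of $D$, their genera, and the combinatorial type of the dual graph are all insensitive to base change; thus I may assume $k = \ovl{k}$ and write $D = D_1 + \cdots + D_n$ for the decomposition into irreducible components, each of which is smooth by the simple normal crossing hypothesis. The whole argument rests on combining one global input, the connectedness of $D$, with one local input, the adjunction formula applied to each $D_i$ separately.

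For connectedness I would use the short exact sequence
\[ 0 \lrar \OO_{\ovl{X}}(-D) \lrar \OO_{\ovl{X}} \lrar \OO_D \lrar 0 . \]
Since $[D] = -K_{\ovl{X}}$ we have $\OO_{\ovl{X}}(-D) \cong \OO_{\ovl{X}}(K_{\ovl{X}})$, and because $\ovl{X}$ is rational both $H^0(\ovl{X}, K_{\ovl{X}}) = 0$ (as $p_g = 0$) and, by Serre duality, $H^1(\ovl{X}, K_{\ovl{X}}) \cong H^1(\ovl{X}, \OO_{\ovl{X}})^{*} = 0$ (as $q = 0$). The long exact cohomology sequence then yields $H^0(\OO_D) \cong H^0(\OO_{\ovl{X}}) = \ovl{k}$, so $D$ is connected.

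For the local input I would apply adjunction to each component. Writing $m_i = \sum_{j \neq i} D_i \cdot D_j$ for the number of intersection points lying on $D_i$ (equivalently, the degree of the $i$-th vertex of the dual graph), and using $K_{\ovl{X}} = -\sum_j D_j$, adjunction gives
\[ 2g_i - 2 = D_i \cdot (D_i + K_{\ovl{X}}) = - \sum_{j \neq i} D_i \cdot D_j = -m_i , \]
so that $m_i = 2 - 2g_i$. As $m_i \geq 0$ this forces $g_i \in \{0,1\}$, with $g_i = 1$ occurring exactly when $m_i = 0$. Now I distinguish cases. If some component has genus $1$, then it meets no other component, so by connectedness it must be all of $D$; being smooth, it is an irreducible genus $1$ curve, which is option (1). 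Otherwise every $g_i = 0$ and every $m_i = 2$, so the dual graph is connected and $2$-regular; a connected $2$-regular graph with no loops (loops being excluded by the simple normal crossing condition) is a single cycle, and since we are not in the one-component case it has at least two vertices, giving option (2).

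The main obstacle is ruling out disconnected configurations: the per-component identity $m_i = 2 - 2g_i$ by itself only forces each genus $1$ component to be isolated and the genus $0$ components to form a $2$-regular graph, i.e.\ a disjoint union of cycles, so a priori $D$ could be a disjoint union of several cycles of rational curves together with isolated smooth elliptic curves. It is precisely the connectedness of $D$, obtained cohomologically from the rationality of $\ovl{X}$, that collapses these possibilities to a single elliptic curve in option (1) or a single cycle in option (2). I do not expect any further subtlety; the handshake identity $\sum_i m_i = 2\delta$ with all $m_i = 2$ merely records that the number of nodes $\delta$ equals $n$, consistent with a cycle.
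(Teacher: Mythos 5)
Your proof is correct and takes essentially the same route as the paper: adjunction applied to each component, combined with connectedness of $D$, with the same case split into an isolated genus~$1$ curve versus a $2$-regular cycle of genus~$0$ curves. The only difference is that you prove connectedness directly from the cohomology sequence of $0 \lrar \OO_{\ovl{X}}(K_{\ovl{X}}) \lrar \OO_{\ovl{X}} \lrar \OO_D \lrar 0$ using $p_g = q = 0$, whereas the paper simply cites Harbourne (Lemma~II.5 of~\cite{Ha97}) for this fact.
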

\begin{proof}
Since this is a geometric statement we may as well extend our scalars to $\ovl{k}$. According to~\cite[Lemma II.5]{Ha97} the underlying curve of $D$ is connected. Now let $D^0 \subseteq D$ be a geometric component and let $E \subseteq D$ be the union of the geometric components which are different from $D'$. Since $[D^0] + [E] = [D]$ is the anti-canonical class the adjunction formula tells us that
$$ 2 - 2g(D^0) = [D^0]\cdot ([D] - [D^0]) = [D^0]\cdot[E] $$
Since $D^0$ and $E$ are effective divisors without common components it follows that $[D^0]\cdot[E] \geq 0$ and hence either $g(D^0) = 1$ and $[D^0]\cdot[E] = 0$ or $g(D^0) = 0$ and $[D^0]\cdot[E] = 2$. Since $D$ is connected it follows that $D$ is either a genus $1$ curve or a cycle of genus $0$ curves. 
\end{proof}

Let us now establish some notation. Let $X$ be a log K3 surface of Picard rank $0$ equipped with a log K3 structure $(\ovl{X},D,\iota)$. We will denote by $d = [D]\cdot [D]$ and $n=\rank(\Pic(\ovl{X}_{\ovl{k}}))$. We note that since $\Pic(X_{\ovl{k}}) = 0$ the surface $X$ cannot be proper and so Proposition~\ref{p:rational} implies that $\ovl{X}_{\ovl{k}}$ is rational. It follows that $d$ and $n$ are related by the formula $n=10-d$. We will refer to $d$ as the \textbf{degree} of the log K3 structure $(\ovl{X},D,\iota)$. If $(\ovl{X},D,\iota)$ is ample then $d > 0$.

Since $X$ has Picard rank $0$, Lemma~\ref{l:basis} implies that the number of geometric components of $D$ is equal to $n$. According to Proposition~\ref{p:cycle}, we either have that $D$ is a smooth curve of genus $1$ and $d=9$ or $D$ is a cycle of $n\geq 2$ genus $0$ curves. The case where $D$ is a curve of genus $1$ and $d=9$ cannot occur, because then $\ovl{X} = \PP^2$ and no genus $1$ curve forms a basis for $\Pic(\PP^2)$. Hence $D$ is necessarily a cycle of $n$ genus $0$ curves. In particular, we have the following corollary:
\begin{cor}
Let $X$ be a log K3 surface of Picard rank $0$ equipped with an ample log K3 structure $(\ovl{X},D,\iota)$ of degree $d$. Then $1 \leq d \leq 8$ and $D$ is a cycle of $10-d$ genus $0$ curves.
\end{cor}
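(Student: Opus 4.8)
The plan is to assemble the ingredients already laid out in the paragraph preceding the statement, so that the argument reduces to bookkeeping that combines Proposition~\ref{p:rational}, Lemma~\ref{l:basis} and Proposition~\ref{p:cycle}. First I would record the lower bound on $d$: since the log K3 structure $(\ovl{X},D,\iota)$ is ample, the class $[D]$ is ample, hence $d = [D]\cdot[D] > 0$, giving $d \geq 1$.

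Next I would pin down the number of geometric components of $D$. Because $\Pic(X_{\ovl{k}}) = 0$, the surface $X$ cannot be proper, so Proposition~\ref{p:rational} forces $\ovl{X}_{\ovl{k}}$ to be rational. Noether's formula then yields $K_{\ovl{X}}^2 = 10 - n$ (the topological Euler characteristic of a rational surface of Picard rank $n$ being $2+n$ while $\chi(\OO)=1$), and since $[D] = -K_{\ovl{X}}$ this reads $d = 10 - n$, i.e. $n = 10 - d$, the relation already recorded above. Since $X$ has Picard rank $0$ and $\ovl{k}^*[X] = \ovl{k}^*$ (as $X_{\ovl{k}}$ is simply connected), Lemma~\ref{l:basis} applies, so the geometric components of $D$ form a basis of $\Pic(\ovl{X}_{\ovl{k}})$; in particular $D$ has exactly $n = 10 - d$ geometric components.

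The one genuine case distinction, and the step I expect to require the most care, is excluding the elliptic alternative in Proposition~\ref{p:cycle}. That proposition asserts that $D$ is either a smooth geometrically irreducible genus $1$ curve or a cycle of at least two genus $0$ curves. In the first case $D$ is a single component, so $n = 1$ and $d = 9$, forcing $\ovl{X}_{\ovl{k}} \cong \PP^2$; but then $[D] = -K_{\PP^2}$ is three times the hyperplane class and cannot generate $\Pic(\PP^2_{\ovl{k}}) \cong \ZZ$, contradicting the conclusion of Lemma~\ref{l:basis} that the single component of $D$ must form a basis. Hence only the cyclic alternative survives, and $D$ is a cycle of $n = 10 - d$ genus $0$ curves.

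Finally I would extract the upper bound on $d$. A cycle in the sense of Proposition~\ref{p:cycle} has at least two vertices, so $n \geq 2$, whence $10 - d \geq 2$ and $d \leq 8$. Combining this with $d \geq 1$ gives $1 \leq d \leq 8$, which together with the description of $D$ completes the proof. I do not anticipate any serious obstacle beyond the genus $1$ exclusion, since every other assertion is a direct transcription of the preceding results.
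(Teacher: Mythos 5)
Your proof is correct and follows essentially the same route as the paper, whose own argument is the paragraph immediately preceding the corollary: ampleness gives $d\geq 1$, Proposition~\ref{p:rational} and rationality give $n=10-d$, Lemma~\ref{l:basis} counts the components, and the genus~$1$ alternative of Proposition~\ref{p:cycle} is excluded because an anticanonical curve cannot generate $\Pic(\PP^2)$, leaving a cycle with $n\geq 2$ and hence $d\leq 8$. The only difference is that you spell out two details the paper leaves implicit, namely the Noether-formula derivation of $n=10-d$ and the use of simple connectedness to secure the condition $\ovl{k}^*[X]=\ovl{k}^*$ in Lemma~\ref{l:basis}.
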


\begin{define}
Let $X$ be a log K3 surface equipped with a log K3 structure $(\ovl{X},D,\iota)$. If $D_1,...,D_n$ are the geometric components of $D$, numbered compatibly with the cyclic order of $G(D)$ (see Proposition~\ref{p:cycle}), then we will denote by $a_i =  [D_i]\cdot[D_i]$. Following the terminology of~\cite{Fr15} we will refer to $(a_1,...,a_n)$ as the \textbf{self-intersection sequence} of $(\ovl{X},D,\iota)$. We note that the self-intersection sequence is well-defined up to a dehidral permutation.
\end{define}

\begin{example}\label{e:d8-sig}
Let $X$ be a log K3 surface of the form described in Example~\ref{e:d8}, and let $(\ovl{X},D,\iota)$ be the associated log K3 structure. Then $\ovl{X}$ is a del Pezzo surface of degree $8$ and the components of $D$ consist of a rational curve $L$ with self intersection $1$ and a rational curve $\wtl{C}$ with self intersection $3$, both defined over $k$. Furthermore, by construction the intersection points of $\wtl{C}$ and $L$ are defined over $k$. It follows that the self-intersection sequence of $(\ovl{X},D,\iota)$ is $(3,1)$ and the Galois action on $G(D)$ is trivial.
\end{example}

\begin{example}\label{e:d7-sig}
Let $X$ be a log K3 surface of the form described in Example~\ref{e:d7}, and let $(\ovl{X},D,\iota)$ be the associated log K3 structure. Then $\ovl{X}$ is a del Pezzo surface of degree $7$ and the components of $D$ consist of a rational curve $L$ with self intersection $1$ and two $\Gal(k(\sqrt{a})/k)$-conjugate rational curves $\wtl{L}_1,\wtl{L}_2$, defined over $k(\sqrt{a})$, each with self intersection $0$. We hence see that the self-intersection sequence of $(\ovl{X},D,\iota)$ is $(0,0,1)$ and the Galois action on $G(D)$ factors through the quadratic extension $k(\sqrt{a})/k$, where the generator of $\Gal(k(\sqrt{a})/k)$ acts by switching the two $0$-curves. In particular, the splitting field of $D$ is $k(\sqrt{a})$.
\end{example}

We shall now consider a basic construction which allows one to change the log K3 structure of a given log K3 surface by performing blow-ups and blow-downs. We will see below that any two log K3 structures on a given log K3 surface can be related to each other by a sequence of such blow-ups and blow-downs.
\begin{const}\label{c:blow-up-down}
Let $X$ be a log K3 surface and let $(\ovl{X},D,\iota)$ be a log K3 structure on $X$ of degree $d$ and self-intersection sequence $(a_1,...,a_n)$. If the intersection point $P_i = D_i \cap D_{i+1}$ is defined over $k$, then we may blow-up $\ovl{X}$ at $P_i$ and obtain a new simple compactification $(\ovl{X}',D',\iota')$. It is then straightforward to verify that $[D'] = -K_{\ovl{X}'}$ and hence $(\ovl{X}',D',\iota')$ is a log K3 structure of degree $d-1$ and self-intersection sequence $(a_1,...,a_i-1,-1,a_{i+1}-1,a_{i+2},...,a_n)$. If $a_i,a_{i+1} \neq -1$ then a direct application of the adjunction formula shows that $P$ cannot lie on any $(-1)$-curve of $X$. If in addition $(\ovl{X},D,\iota)$ is ample and $d > 1$ then $(\ovl{X}',D',\iota')$ is ample by the Nakai--Moishezon criterion. Following the notation of~\cite{Fr15} we will refer to such blow-ups as \textbf{corner blow-ups}.

Alternatively, if $n \geq 3$ and for some $i=1,...,n$, the geometric component $D_i$ is a $(-1)$-curve defined over $k$, then we may blow-down $\ovl{X}$ and obtain a new simple compactification $(\ovl{X}',D',\iota')$. It is then straightforward to verify that $[D'] = -K_{\ovl{X}'}$ and hence $(\ovl{X}',D',\iota')$ is a log K3 structure of degree $d+1$ and self-intersection sequence $(a_1,...,a_{i-1}+1,a_{i+1}+1,a_{i+2},...,a_n)$. Furthermore, if $(\ovl{X},D,\iota)$ is ample then so is $(\ovl{X}',D',\iota')$. Following the notation of~\cite{Fr15} we will refer to such blow-downs as \textbf{corner blow-downs}.
\end{const}

\begin{rem}\label{r:trivial}
Let $X$ be a log K3 surface of Picard rank $0$ and let $(\ovl{X},D,\iota)$ be an ample log K3 structure on $X$. Let $\ovl{X}' \lrar \ovl{X}$ be a new log K3 structure obtained by a corner blow-up (see Construction~\ref{c:blow-up-down}). It is then clear that the Galois action on $G(D)$ is trivial if and only if the Galois action on $G(D')$ is trivial. In particular, corner blow-ups preserve the splitting field of the compactification.
\end{rem}

\begin{prop}\label{p:cofiltered}
Let $X$ be a log K3 surface and let $(\ovl{X},D,\iota)$ and $(\ovl{X}',D',\iota')$ be two log K3 structures. Then $(\ovl{X}',D',\iota')$ can be obtained from $(\ovl{X},D,\iota)$ by first performing a sequence of corner blow-ups and then performing a sequence of corner blow-downs.
\end{prop}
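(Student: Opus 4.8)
The plan is to realize the comparison through a common dominating log K3 structure. The two structures share the open surface $X$, so the identity on $X$ induces a birational map $\phi \colon \ovl{X} \dashrightarrow \ovl{X}'$ which is an isomorphism over $X$. By the structure theory of birational maps between smooth projective surfaces, $\phi$ admits a minimal resolution: a smooth projective surface $\ovl{W}$ together with birational morphisms $p \colon \ovl{W} \to \ovl{X}$ and $q \colon \ovl{W} \to \ovl{X}'$, each a composition of point blow-ups, which are isomorphisms over $X$ and satisfy the characteristic minimality property that no $p$-exceptional divisor is $q$-exceptional. Since $p$ and $q$ are isomorphisms over $X$, we have $p^{-1}(D) = q^{-1}(D') = \ovl{W} \setminus X =: D_W$, and every exceptional curve lies in $D_W$. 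It therefore suffices to prove two things: that $(\ovl{W}, D_W)$ is again a log K3 structure, and that $p$ and $q$ are each compositions of corner blow-downs. Granting this, running $p$ backwards exhibits $\ovl{W}$ as obtained from $\ovl{X}$ by corner blow-ups, and $q$ then descends $\ovl{W}$ to $\ovl{X}'$ by corner blow-downs, which is exactly the assertion.

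The key technical device I would use is the canonical logarithmic $2$-form. Because $[D] = -K_{\ovl{X}}$, the sheaf $\Omega^2_{\ovl{X}}(\log D) \cong \OO(K_{\ovl{X}} + D)$ is trivial, so there is a nowhere-vanishing section $\omega$, unique up to a scalar, which restricts to a nowhere-zero algebraic $2$-form on $X$. Since $X$ is simply connected we have $\ovl{k}^*[X] = \ovl{k}^*$ (as recorded before Lemma~\ref{l:basis}), and hence the form attached to $\ovl{X}'$ agrees with $\omega$ up to a scalar; thus $\omega$ is intrinsic to $X$. On any smooth compactification $\ovl{Y}$ of $X$ the form $\omega$ is a rational $2$-form with $\div(\omega) = K_{\ovl{Y}}$ supported on $\ovl{Y} \setminus X$, and the compactification is a log K3 structure if and only if $\div(\omega) = -(\ovl{Y} \setminus X)$, i.e. $\omega$ has a simple pole along every boundary component and no zeros. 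In particular, for an exceptional divisor $E$ over $X$ the quantity $\ord_E(\omega)$ depends only on the valuation $E$ and not on the model, and a direct blow-up computation shows that a corner blow-up gives $\ord_E(\omega) = -1$, whereas blowing up any point of the boundary that is not a corner gives $\ord_E(\omega) \geq 0$.

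The heart of the argument --- and the step I expect to be the main obstacle --- is showing that no ``bad'' exceptional divisor, i.e. one with $\ord_E(\omega) \geq 0$, occurs in the minimal resolution. Here is where minimality enters. For a $p$-exceptional divisor $E$ that is \emph{not} $q$-exceptional, $E$ maps onto a component of $D'$, so $\ord_E(\omega) = -1$; by symmetry the same holds for $q$-exceptional divisors dominating a component of $D$. Hence any divisor with $\ord_E(\omega) \geq 0$ must be contracted by both $p$ and $q$ --- but by the minimality of the resolution there is no such divisor. Consequently $\omega$ has simple poles along all of $D_W$ and no zeros, so $\div(\omega) = -D_W$ and $(\ovl{W}, D_W)$ is a log K3 structure; by Proposition~\ref{p:rational} and Proposition~\ref{p:cycle} its boundary is then a cycle of rational curves refining $D$ by corner insertions.

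It remains to factor $p$ and $q$ into corner blow-downs, which is now formal. Since every $p$-exceptional curve lies in the boundary cycle $D_W$, and $p$ is a nontrivial birational morphism only when it contracts some curve, I would repeatedly select a $p$-exceptional $(-1)$-curve $C$; as $C$ is a component of the cycle it meets exactly two other boundary components, so contracting it is a corner blow-down in the sense of Construction~\ref{c:blow-up-down}, and the resulting surface is again a log K3 structure dominating $\ovl{X}$. Iterating contracts $\ovl{W}$ down to $\ovl{X}$ through corner blow-downs, and the same argument applies to $q$; this completes the proof. In the degenerate cases where $D$ is empty or a single smooth genus-$1$ curve, Proposition~\ref{p:cycle} shows there are no corner divisors at all, and the $2$-form analysis forces $\ovl{W} = \ovl{X} = \ovl{X}'$, so the statement holds trivially.
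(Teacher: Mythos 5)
Your $\omega$-dichotomy is an attractive idea, and most of its ingredients are sound: $\omega$ is intrinsic to $X$ up to scalar, a compactification is a log K3 structure exactly when $\div(\omega)$ is minus the boundary, divisors extracted by corner blow-ups have $\ord_E(\omega)=-1$ while all other divisors over the boundary have $\ord_E(\omega)\geq 0$, and a $p$-exceptional divisor dominating a component of $D'$ has $\ord_E(\omega)=-1$. The gap is precisely at the step you flag as the heart of the proof. There is no ``characteristic minimality property'' saying that a minimal resolution of a birational map of smooth projective surfaces has no divisor contracted on both sides: minimality is equivalent only to the absence of a common \emph{$(-1)$-curve}, and common exceptional divisors of self-intersection $\leq -2$ genuinely occur. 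A concrete example is the quadratic Cremona map $\phi\colon \PP^2 \dashrightarrow \PP^2$, $(x:y:z)\mapsto (x^2:xy:y^2-xz)$, whose base locus is a point $z_1$ with two infinitely near points $z_2,z_3$. Its minimal resolution is the composite $p\colon W\lrar \PP^2$ of the three corresponding blow-ups, and on the other side $q$ contracts the strict transform of the line $\{x=0\}$ and then the strict transforms of the \emph{first two} exceptional curves; so those two curves (which are $(-2)$-curves on $W$) are contracted by both $p$ and $q$. Worse, since every resolution of $\phi$ must extract these two divisorial valuations, and each of them has point center on both copies of $\PP^2$, \emph{no} resolution of $\phi$ satisfies the property you postulate. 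So the property cannot be quoted as general structure theory; if it holds for the maps you consider, that requires proof.

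Because of this, your argument does not exclude ``bad'' divisors: a divisor with $\ord_E(\omega)\geq 0$ must indeed be contracted by both $p$ and $q$, but nothing you have shown forbids that. This is exactly the scenario the paper's proof is built to eliminate --- a non-corner blow-up performed on one side whose exceptional locus is blown back down on the other --- and eliminating it is the actual work: the paper takes a resolution minimizing the \emph{total} number of blow-ups, tracks the distinguished circles $C_i \subseteq G(D_i)$, isolates the chains $T_i$ of curves lying over a non-corner center, shows the blow-ups and blow-downs can be reordered so that such a chain is created last and destroyed first, and then cancels it against minimality. To complete your proof you would need a substitute for this step, e.g.\ an argument that a common exceptional divisor with $\ord_E(\omega)\geq 0$ always permits passage to a strictly smaller resolution; granting that, your $\omega$-calculus does finish the argument as you describe. (A smaller point: when factoring $p$ and $q$ into corner blow-downs defined over $k$, you should contract Galois orbits of disjoint $(-1)$-curves rather than individual curves, as in the paper's statement of Construction~\ref{c:blow-up-down} a corner blow-down is performed on a curve defined over $k$.)
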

\begin{proof}
Clearly we may assume that $D$ and $D'$ are not empty. Since $k$ has characteristic $0$ we may find a third simple compactification $(Y,E,\iota_Y)$, equipped with compatible maps
$$ \xymatrix{
& Y \ar_{p}[dl]\ar^{q}[dr] & \\
\ovl{X} && \ovl{X}' \\
}$$
where both $p$ and $q$ can be factored as a sequence of blow-down maps (defined over $k$)
$$ (Y,E) = (\ovl{X}_m,D_m) \x{p_m}{\lrar} (\ovl{X}_{m-1},D_{m-1}) \x{p_{m-1}}{\lrar} \hdots \x{p_1}{\lrar} (\ovl{X}_0,D_0) = (\ovl{X},D) $$
and
$$ (Y,E) = (\ovl{X}'_k,D'_k) \x{q_k}{\lrar} (\ovl{X}'_{k-1},D'_{k-1}) \x{q_{k-1}}{\lrar} \hdots \x{q_1}{\lrar} (\ovl{X}'_0,D'_0) = (\ovl{X}',D') $$
such that the center of $p_i$ is contained in $D_{i-1}$ and the center of $q_j$ is contained in $D'_{j-1}$. Furthermore, we may choose $Y$ to be such that the $m+k$ attains its minimal possible value. Since $D$ and $D'$ are simple normal crossing divisors it follows that each $D_i$ and $D'_j$ are simple normal crossing divisors. We further note that every geometric fiber of either $p$ or $q$ is connected.

According to Proposition~\ref{p:cycle}, $D$ is either a geometrically irreducible smooth curve of genus $1$ or a cycle of genus $0$ curves. Let us first treat the case where $D$ is a curve of genus $1$. Let $\wtl{D}$ be the strict transform of $D$ in $Y$. Since $\wtl{D}$ has genus $1$ the image $q(\wtl{D})$ cannot be a point, and hence $q(\wtl{D}) = D'$. Since $D'$ is smooth and the fibers of $q$ are connected $q$ must induce an isomorphism $\wtl{D} \x{\cong}{\lrar} D'$. In this case the birational transformation $\xymatrix{\ovl{X} \ar@{-->}[r] & \ovl{X}'}$ has no fundamental points and hence extends to an well-defined map $\ovl{X} \lrar \ovl{X}'$. Arguing the same in the other direction we get that $\xymatrix{\ovl{X}' \ar@{-->}[r] & \ovl{X}}$ has no fundamental points as well and hence we get an isomorphism $\ovl{X} \cong \ovl{X}'$. In particular, the desired result holds vacuously.

Now assume that $D$ is a cycle of genus $0$ curves. By the above argument $D'$ must be a cycle of genus $0$ curves as well. In particular, $G(D)$ and $G(D')$ are circles. We shall now define for each $i=0,...,m$ a simple circle $C_i \subseteq G(D_i)$ as follows. For $i=0$ we set $C_0 = G(D_0)$. Now suppose that $C_i \subseteq G(D_i)$ has been defined for some $i \geq 0$. Recall that $\ovl{X}_{i+1}$ is obtained from $\ovl{X}_i$ by blowing up a point on $D_i$. If the blow-up point is not an intersection point corresponding to an edge of $C_i$ then the strict transforms of the components in $C_i$ form a simple circle in $G(D_{i+1})$ and we define $C_{i+1}$ to be the resulting circle. If the blow-up point is an intersection point corresponding to an edge of $C_i$ then the sequence of strict transforms of the components in $C_i$ forms a simple chain, and we may close this chain to a simple circle by adding the exceptional curve of $\ovl{X}_{i+1}$. We then define $C_{i+1}$ to be the resulting circle. Similarly, we may define a circle $C_j' \subseteq G(D'_j)$ for $j=0,...,k$. These constructions yield two simple circles in the dual graph $G(E) = G(D_m) = G(D'_k)$, which must coincide since $H^1(|G(E)|,\ZZ) \cong \ZZ$ (where $|\bullet|$ denotes geometric realization) and so $G(E)$ cannot contain two distinct simple circles. In particular, the geometric components of $D'_i$ which belong to $C_i'$ are exactly the geometric components which are images of geometric components lying in $C_m \subseteq G(D_m) = G(D'_k)$.

We now claim that the minimality of $(Y,E)$ implies that $C_i = G(D_i)$ for every $i$. Indeed, let $i_0 > 0$ be the smallest index such that $C_{i_0} \neq G(D_{i_0})$. This means that $\ovl{X}_{i_0}$ is obtained from $\ovl{X}_{i_0-1}$ by blowing up a point $P \in D_{i_0-1}$ which lies on exactly one geometric component $D^0_{i_0-1} \subseteq D_{i_0-1}$. We may then define inductively $D^0_i \subseteq D_i$ for $i \geq i_0$ to be the strict transform of $D^0_{i-1}$. This gives us in particular a component $E^0 = D^0_m$ in $E = D_m$. Let $j_0$ be the smallest index such that the image of $E^0$ in $D'_{j_0}$ is $1$-dimensional. We may now define for each $j \geq j_0$ the component $D^1_j \subseteq D'_j$ to be the image of $E^0$. In addition, we may define for each $i \geq i_0$ the curve $T_i \subseteq D_i$ which is the inverse image of $P$, and for each $j \geq j_0$ the curve $T'_j \subseteq D'_j$ which is image of $T_m \subseteq D_m = E$. We note that $T_i$ need not be irreducible, but must be connected. By construction, no geometric component of $T_i$ is a vertex of $C_i$ and no geometric component of $T'_i$ is a vertex of $C'_i$. We now claim that $T_{j_0}$ is a point. If $j_0 = 0$ then this follows from the fact that every component of $D'_0$ is a vertex of $C'_0$. If $j_0 > 0$ then by the choice of $j_0$ the map $q_{j_0}:\ovl{X}'_{j_0} \lrar \ovl{X}'_{j_0-1}$ must be the blow-down of $D^1_{j_0}$. Since $D'_{j_0-1}$ has simple normal crossings it follows that $D^1_{j_0}$ can have at most two intersection points with other components of $D'_{j_0}$. Since $D^1_{j_0}$ belongs to the circle $C'_{j_0}$ it already has two intersection points with components in $C'_{j_0}$, and hence these must be the only intersection points on $D^1_{j_0}$. This implies that $T'_{j_0}$ is a point (otherwise there would be a third intersection point of $D^1_{j_0}$ with some component of $T'_{j_0}$).

Let $r$ be the number of components of $T_m \subseteq D_m$. We now observe that for $i \geq i_0$ the only component of $D_i$ which meets a component of $T_i$ is $D^0_i$, and that for $j > j_0$ the only component of $D'_j$ which meets a component of $T'_j$ is $D^1_j$. We may hence rearrange the order of blow-ups and blow-downs so that the components of $T_m$ are added in steps $i=m-r+1,...,m$ and are blown-down in steps $j=k,...,k-r+1$. The minimality of $(Y,E)$ now implies that $r = 0$. It follows that $C_i = G(D_i)$ for every $i$ as desired, and hence $C'_j = G(D'_j)$ for every $j$ as well. In particular, $E$ is a circle of curves, $(Y,E)$ is obtained from $(\ovl{X},D)$ by a sequence of blowing up intersection points, and $(\ovl{X}',D')$ is obtained from $(Y,E)$ by a sequence of blowing down components.
\end{proof}

\begin{cor}\label{c:cofiltered}
Let $X$ be a log K3 surface. Then the category $\Log(X)$ of log K3 structures on $X$ is \textbf{cofitlered}. In particular, if $(\ovl{X},D,\iota)$ and $(\ovl{X}',D',\iota')$ are two log K3 structures then there exists a third log K3 structure $(Y,E,\eta)$ equipped with maps $p:(Y,E) \lrar (\ovl{X},D)$ and $q:(Y,E) \lrar (\ovl{X}',D')$ which respect the embedding of $X$. 
\end{cor}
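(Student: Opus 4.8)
The plan is to deduce the corollary immediately from Proposition~\ref{p:cofiltered}, which already carries out all of the geometric work. First I would record the standard reduction: since there is at most one morphism between any two objects of $\Log(X)$, this poset is cofiltered exactly when it is nonempty and any two of its objects admit a common lower bound, i.e. a third object equipped with morphisms to both. The third clause in the definition of a cofiltered category, which asks that parallel arrows be coequalized by a morphism out of some object, is vacuous in a poset because parallel arrows are automatically equal. Nonemptiness is part of the data: a log K3 surface comes by Definition~\ref{d:k3} equipped with at least one log K3 structure. (If $X$ is proper then $D=\emptyset$, the poset $\Log(X)$ is a single point, and there is nothing to prove; so I may assume all structures in sight have nonempty boundary, matching the normalization at the start of the proof of the proposition.)

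It then remains to produce common lower bounds. Given two log K3 structures $(\ovl{X},D,\iota)$ and $(\ovl{X}',D',\iota')$, Proposition~\ref{p:cofiltered} asserts that the latter is obtained from the former by first performing a sequence of corner blow-ups and then a sequence of corner blow-downs. I would take $(Y,E,\eta)$ to be the object reached after carrying out all the corner blow-ups but before any of the blow-downs; this is precisely the apex of the span $\ovl{X} \x{p}{\llar} Y \x{q}{\lrar} \ovl{X}'$ constructed in the proof of the proposition. By Construction~\ref{c:blow-up-down} each corner blow-up sends a log K3 structure to a log K3 structure, so $(Y,E,\eta)$ is genuinely an object of $\Log(X)$ rather than merely a simple compactification.

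Finally I would exhibit the two required morphisms. Each corner blow-up is by construction the inverse of a blow-down map of pairs, defined over $k$ and respecting the embedding of $X$; composing the corresponding blow-down maps yields a morphism $p:(Y,E) \lrar (\ovl{X},D)$ in $\Log(X)$. Likewise the subsequent corner blow-downs compose to a morphism $q:(Y,E) \lrar (\ovl{X}',D')$. Hence $(Y,E,\eta)$ dominates both structures, which furnishes the desired common lower bound and proves that $\Log(X)$ is cofiltered.

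There is no substantive obstacle here beyond Proposition~\ref{p:cofiltered} itself. The only points that need care are the reduction of cofilteredness to pairwise directedness via the poset structure, the verification that the intermediate object $(Y,E,\eta)$ remains a log K3 structure (which is exactly what Construction~\ref{c:blow-up-down} guarantees), and keeping track of the directions of the blow-down maps so that both morphisms land the correct way in $\Log(X)$.
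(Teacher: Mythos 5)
Your proposal is correct and matches the paper's (implicit) argument exactly: the paper states Corollary~\ref{c:cofiltered} as an immediate consequence of Proposition~\ref{p:cofiltered}, with the common lower bound being precisely the intermediate compactification reached after the corner blow-ups, which Construction~\ref{c:blow-up-down} guarantees is again a log K3 structure mapping to both $(\ovl{X},D,\iota)$ and $(\ovl{X}',D',\iota')$ via the blow-down maps. Your additional remarks on nonemptiness and the vacuity of the coequalizer condition in a poset are correct bookkeeping that the paper leaves unstated.
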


\begin{cor}\label{c:splitting-inv}
Let $X$ be a log K3 surface. Then the splitting fields of all log K3 structures on $X$ are identical. In particular, the splitting field is an invariant of $X$ itself.
\end{cor}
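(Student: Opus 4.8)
The plan is to reduce the statement to the behaviour of the splitting field under a single corner blow-up, and then to check directly that such a blow-up leaves the splitting field unchanged. Recall from Definition~\ref{d:dual} that the splitting field of a divisor $D$ is the fixed field of the kernel $K_D$ of the action $\Gam_k \lrar \Aut(G(D))$, i.e. of the subgroup of $\Gam_k$ fixing every geometric component of $D$ together with every intersection point of $D$. Thus, reinterpreted, the assertion is that this subgroup $K_D \subseteq \Gam_k$ is the same for all log K3 structures on $X$, and it suffices to prove equality of these kernels.

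First I would invoke Corollary~\ref{c:cofiltered}: given two log K3 structures $(\ovl{X},D,\iota)$ and $(\ovl{X}',D',\iota')$, there is a third structure $(Y,E,\eta)$ dominating both, via maps $p:(Y,E) \lrar (\ovl{X},D)$ and $q:(Y,E) \lrar (\ovl{X}',D')$. It therefore suffices to treat a single dominance map, say $p$, and to show $K_E = K_D$, since then the splitting field of $D$ equals that of $E$ equals that of $D'$. By Proposition~\ref{p:cofiltered} (and its proof), such a $p$ factors as a finite sequence of corner blow-ups, each a $k$-morphism whose inverse blows up a $\Gam_k$-orbit of intersection points of the intermediate divisor. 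So the problem reduces to the case of one corner blow-up.

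Now consider a single corner blow-up, blowing up a $\Gam_k$-orbit $\{P_1,\dots,P_r\}$ of intersection points of $D$, with exceptional curves $E_1,\dots,E_r$, producing a log K3 structure $(\ovl{X}',D',\iota')$. The dual graph $G(D')$ is obtained from $G(D)$ by subdividing the $r$ edges corresponding to the $P_j$, and the blow-down induces a $\Gam_k$-equivariant map $G(D') \lrar G(D)$. For the inclusion $K_{D'} \subseteq K_D$: any $\sigma$ fixing $G(D')$ pointwise fixes all strict transforms, hence by equivariance of the projection it fixes every component and every intersection point of $D$. For the reverse inclusion $K_D \subseteq K_{D'}$: if $\sigma$ fixes every component and every intersection point of $D$, then it fixes each center $P_j$, hence each exceptional curve $E_j$; and since $E_j$ meets the strict transform of each component through $P_j$ in a single point, $\sigma$ fixes these new intersection points too, so $\sigma$ fixes $G(D')$ pointwise. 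Hence $K_D = K_{D'}$, which gives the desired invariance.

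The one point requiring care is the graph-level bookkeeping for a corner blow-up performed over $k$ rather than over $\ovl{k}$: a single such operation may blow up an entire Galois orbit of intersection points simultaneously, and one must confirm that fixing every intersection point of $D$ forces every new exceptional vertex and every new edge of $G(D')$ to be fixed. This is straightforward once one notes that in a cycle of length $\geq 3$ each intersection point is the unique common point of a definite pair of adjacent components (and that for a cycle of length $2$ the two intersection points are already recorded as distinct edges of $G(D)$), so that fixing the components and intersection points of $D$ already pins down all the incidence data introduced upstairs.
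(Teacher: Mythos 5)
Your proof is correct and follows essentially the same route as the paper: the paper's proof is simply ``Combine Proposition~\ref{p:cofiltered} and Remark~\ref{r:trivial}'', i.e., reduce via the blow-up/blow-down factorization to a single corner blow-up and observe that such a blow-up (an equivariant subdivision of edges of the dual graph) does not change the kernel of $\Gam_k \lrar \Aut(G(D))$. Your argument carries out exactly this reduction, and in addition writes out the kernel equality $K_D = K_{D'}$ that the paper's Remark~\ref{r:trivial} leaves as ``clear''.
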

\begin{proof}
Combine Proposition~\ref{p:cofiltered} and Remark~\ref{r:trivial}.
\end{proof}

\subsection{The characteristic class}

In this subsection we will focus our attention on ample log K3 surfaces of Picard rank $0$. We will show that every such surface admits an ample log K3 structure of degree $5$ and self-intersection sequence $(-1,-1,-1,-1,-1)$. Associated with such a log K3 structure is a natural \textbf{characteristic class} $\alp \in H^1(k,\DD_5)$. We will show that this class is independent of the choice of a log K3 structure, and is hence an invariant of $X$ itself. We begin with some preliminary lemmas.

\begin{lem}\label{l:invariant}
Let $X$ be a log K3 surface of Picard rank $0$ and let $(\ovl{X},D,\iota)$ be a log K3 structure on $X$ of degree $d$ and self-intersection sequence $(a_1,...,a_n)$. Then
$$ \sum_i a_i = 3d - 20 .$$
\end{lem}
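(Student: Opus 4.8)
The plan is to extract the identity directly from the adjunction formula applied to the individual components of $D$, and then to sum over all components. The relevant structural facts are already in place: by the discussion preceding the lemma, $D$ is a cycle of exactly $n = 10 - d$ smooth rational (genus $0$) curves $D_1, \dots, D_n$; these components form a basis for $\Pic(\ovl{X}_{\ovl{k}})$ by Lemma~\ref{l:basis} (which is where the Picard rank $0$ hypothesis enters, guaranteeing there are precisely $n$ of them); and the defining property of a log K3 structure gives $[D] = \sum_{i} [D_i] = -K_{\ovl{X}}$.

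First I would apply adjunction to each smooth curve $D_i \subseteq \ovl{X}$. Since $D_i$ has genus $0$, this reads $-2 = [D_i] \cdot ([D_i] + K_{\ovl{X}})$. Substituting $K_{\ovl{X}} = -[D]$ and $[D_i]\cdot[D_i] = a_i$ yields $a_i = [D_i] \cdot [D] - 2$ for each $i$. Next I would sum this relation over $i = 1, \dots, n$. Using $\sum_i [D_i] = [D]$ and $[D]\cdot[D] = d$, the right-hand side collapses to
$$ \sum_i a_i = \Bigl(\sum_i [D_i]\Bigr) \cdot [D] - 2n = d - 2n. $$
Finally, substituting the relation $n = 10 - d$ recorded earlier (which rests on $\ovl{X}_{\ovl{k}}$ being rational, via Proposition~\ref{p:rational}) gives $\sum_i a_i = d - 2(10-d) = 3d - 20$, as claimed.

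The computation is essentially forced once the cyclic genus-$0$ structure is known, so I do not anticipate a genuine obstacle; the only points requiring care are that there are \emph{exactly} $n$ components and the input $n = 10 - d$, both of which are already established. As a consistency check one could instead expand
$$ d = [D]\cdot[D] = \Bigl(\sum_i [D_i]\Bigr)^2 = \sum_i a_i + 2\sum_{i<j}[D_i]\cdot[D_j], $$
and observe that, $D$ being a cycle, the off-diagonal sum counts the $n$ edges (with the convention that the two components meet in two points when $n=2$), giving $d = \sum_i a_i + 2n$ and hence the same conclusion. I prefer the per-component adjunction route, since it avoids the explicit edge count and handles the degenerate $n = 2$ case uniformly.
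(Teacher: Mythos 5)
Your proof is correct, and it takes a route that differs from the paper's in a meaningful way. The paper's own argument expands the self-intersection of the full divisor directly: writing $d = [D]\cdot[D] = \sum_i a_i + 2\sum [D_i]\cdot[D_{i+1}]$ and using the cycle structure from Proposition~\ref{p:cycle} to count the cross terms as $2n$ (one transverse intersection point per edge of the cycle), which gives $d = \sum_i a_i + 2n$ and then $\sum_i a_i = d - 2n = 3d-20$ via $n = 10-d$. You instead apply adjunction component by component, $a_i = [D_i]\cdot[D] - 2$, and sum; this reaches $\sum_i a_i = d - 2n$ with no explicit edge count. The two are close cousins: your adjunction step is essentially the same computation that the paper uses inside the proof of Proposition~\ref{p:cycle} (namely $[D_i]\cdot([D]-[D_i]) = 2$ for a genus-$0$ component), so you are rerunning that mechanism and summing, whereas the paper invokes only the combinatorial conclusion of Proposition~\ref{p:cycle} and does the intersection bookkeeping by hand. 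What your version buys, as you note, is uniform treatment of the degenerate case $n=2$, where the paper's edge count silently relies on the convention that the two components meet in two points; what the paper's version buys is that it needs nothing beyond the cycle combinatorics, with adjunction quarantined in the earlier proposition. Both arguments lean on the same two external inputs, which you identify correctly: the count of components via Lemma~\ref{l:basis} (the only place Picard rank $0$ enters) and the relation $n = 10-d$ coming from rationality of $\ovl{X}_{\ovl{k}}$ via Proposition~\ref{p:rational}.
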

\begin{proof}
Since the geometric components of $D$ form a cycle we have
$$ d = [D] \cdot [D] = \sum_{i=1}^{n} [D_i]\cdot[D_i] + 2\sum_{i=1}^{n-1}[D_i][D_{i+1}] + 2[D_n]\cdot[D_0] = \sum_i a_i + 2n $$
and so
$$ \sum_i a_i = d - 2n = 3d - 20 $$
as desired.
\end{proof}

\begin{cor}
Let $X$ be a smooth, simply connected surface and $(\ovl{X},D,\iota)$ an ample log K3 structure on $X$ of degree $d$. Then $d \geq 5$.
\end{cor}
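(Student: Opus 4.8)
The plan is to play the numerical identity of Lemma~\ref{l:invariant} against a lower bound on the individual self-intersections $a_i$ that is forced by ampleness. First I would record the geometric setup. Since $[D] = -K_{\ovl{X}}$ is ample the surface $\ovl{X}$ is del Pezzo; as $D$ is a nonzero effective divisor, Proposition~\ref{p:rational} shows that $\ovl{X}_{\ovl{k}}$ is rational, and Proposition~\ref{p:cycle} then presents $D$ as a cycle of $n$ genus $0$ curves $D_1,\dots,D_n$. (The genus $1$ alternative of Proposition~\ref{p:cycle} forces $\ovl{X}=\PP^2$ and $d=9$, which already obeys the claimed bound.) Since $X$ has Picard rank $0$, Lemma~\ref{l:basis} makes the $D_i$ a basis of $\Pic(\ovl{X}_{\ovl{k}})$, so that $n = \rank \Pic(\ovl{X}_{\ovl{k}}) = 10-d$, and Lemma~\ref{l:invariant} supplies the identity $\sum_i a_i = 3d-20$.

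The crux of the argument, and the only place where the ampleness hypothesis really enters, is the inequality $a_i \geq -1$ for each $i$. Because $-K_{\ovl{X}} = [D]$ is ample it has strictly positive intersection with every irreducible curve, so $-K_{\ovl{X}} \cdot D_i > 0$. On the other hand, the adjunction formula applied to the smooth genus $0$ curve $D_i$ reads $-2 = D_i \cdot (D_i + K_{\ovl{X}}) = a_i + K_{\ovl{X}} \cdot D_i$, that is, $-K_{\ovl{X}} \cdot D_i = a_i + 2$. Hence $a_i + 2 > 0$, and since $a_i$ is an integer this forces $a_i \geq -1$.

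Combining the two inputs finishes the proof by a one-line estimate:
$$ 3d - 20 = \sum_i a_i \geq -n = -(10-d) = d - 10, $$
which rearranges to $2d \geq 10$, i.e. $d \geq 5$. I do not expect any serious obstacle here: all of the content is concentrated in the bound $a_i \geq -1$, whose role is precisely to convert the ampleness assumption into a numerical constraint on the self-intersection sequence, while the remaining steps are bookkeeping with the identity of Lemma~\ref{l:invariant}. The one point worth flagging is that the argument genuinely uses Picard rank $0$ (through the equality $n = 10-d$); without it one only obtains $d \geq n$ from the same chain of inequalities.
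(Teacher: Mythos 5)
Your proof is correct and takes essentially the same route as the paper: the paper's proof likewise combines Lemma~\ref{l:invariant} with the bound $a_i \geq -1$ and the relation $n = 10-d$ to get $3d-20 = \sum_i a_i \geq -n = d-10$, hence $d \geq 5$. The only difference is that the paper quotes $a_i \geq -1$ as a standard property of del Pezzo surfaces while you derive it from adjunction plus ampleness of $-K_{\ovl{X}}$, and your closing remark is apt --- the Picard rank $0$ hypothesis (implicit in the corollary's context and in Lemma~\ref{l:invariant}) is genuinely needed for $n = 10-d$.
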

\begin{proof}
Let $(a_1,...,a_n)$ be the self-intersection sequence of $(\ovl{X},D,\iota)$. Since $\ovl{X}$ is a del Pezzo surface we have that $a_i \geq -1$ for every $i=1,..,n$. By Lemma~\ref{l:invariant} we get that
$$ 3d - 20 = \sum_i a_i \geq -n = d - 10 $$
and so $d \geq 5$, as desired.
\end{proof}

\begin{prop}\label{p:d8}
Let $X$ be a log K3 surface of Picard rank $0$ and let $(\ovl{X},D,\iota)$ be an ample log K3 structure of degree $8$. Then $(\ovl{X},D,\iota)$ has self-intersection sequence $(3,1)$. 
\end{prop}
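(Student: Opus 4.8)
The plan is to pin down the two self-intersection numbers purely from numerical constraints on the Picard lattice. Since $d=8$ and $n=10-d$, the divisor $D$ has exactly $n=2$ geometric components $D_1,D_2$, and being a cycle of genus $0$ curves (by the corollary following Proposition~\ref{p:cycle}) they meet at exactly two points, so $[D_1]\cdot[D_2]=2$. Writing $(a_1,a_2)$ for the self-intersection sequence, Lemma~\ref{l:invariant} immediately supplies the first equation $a_1+a_2 = 3\cdot 8 - 20 = 4$.

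To obtain a second equation I would exploit that $X$ has Picard rank $0$. By Lemma~\ref{l:basis} the classes $[D_1],[D_2]$ form a $\ZZ$-basis of $\Pic(\ovl{X}_{\ovl{k}})$, so the Gram matrix of the intersection form in this basis is $\begin{pmatrix} a_1 & 2 \\ 2 & a_2 \end{pmatrix}$, whose determinant $a_1a_2-4$ is exactly the discriminant of the Picard lattice of $\ovl{X}_{\ovl{k}}$. Since $\ovl{X}$ is a del Pezzo surface of degree $8$, the surface $\ovl{X}_{\ovl{k}}$ is isomorphic to either $\PP^1\times\PP^1$ or to the blow-up of $\PP^2$ at a single point; in both cases the Picard lattice is unimodular and, by the Hodge index theorem, of signature $(1,1)$, so its discriminant is $-1$. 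This yields the second equation $a_1a_2 = 3$.

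Combining $a_1+a_2=4$ with $a_1a_2=3$, the numbers $a_1,a_2$ are the two roots of $t^2-4t+3=0$, namely $1$ and $3$. Since the self-intersection sequence is only well-defined up to a dihedral permutation, this shows it equals $(3,1)$, as claimed. The only step carrying genuine content beyond bookkeeping is the determination that the Gram determinant is $-1$, i.e.\ that the Picard lattice has discriminant $-1$; this is precisely where the del Pezzo (degree $8$) hypothesis enters. I would justify it either by the classification of degree $8$ del Pezzo surfaces over an algebraically closed field, or more intrinsically by the fact that the Picard lattice of a geometrically rational surface is unimodular of signature $(1,n-1)$, which forces discriminant $(-1)^{n-1}=-1$ when $n=2$.
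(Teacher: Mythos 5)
Your proof is correct and follows essentially the same route as the paper: both combine Lemma~\ref{l:invariant} (giving $a_1+a_2=4$), Proposition~\ref{p:cycle} (giving $[D_1]\cdot[D_2]=2$), and Lemma~\ref{l:basis} (the classes $[D_1],[D_2]$ form a basis, so the Gram matrix $\left(\begin{smallmatrix} a_1 & 2 \\ 2 & a_2 \end{smallmatrix}\right)$ must be unimodular). The only difference is in the final bookkeeping: the paper enumerates the options permitted by the del Pezzo bound $a_i \geq -1$ and eliminates all but $(3,1)$ using $\det = \pm 1$, whereas you pin the determinant to exactly $-1$ via the Hodge index theorem and solve the resulting quadratic, which lets you bypass the $a_i \geq -1$ bound altogether.
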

\begin{proof}
Let us write $D = D_1 \cup D_2$ with $D_1,D_2$ the geometric components. According to Lemma~\ref{l:basis} the classes $\{[D_1],[D_2]\}$ form a basis for $\Pic(\ovl{X}_{\ovl{k}})$. Let $(a_1,a_2)$ be the self-intersection sequence of $D$. By Lemma~\ref{l:invariant} we have $a_1 + a_2 = 4$. Since $\ovl{X}$ is a del Pezzo surface we have $a_i \geq -1$. Hence, up to switching $[D_1]$ and $[D_2]$, the only options for $(a_1,a_1)$ are $(5,-1),(4,0),(3,1)$ and $(2,2)$. By Proposition~\ref{p:cycle} we have $[D_1] \cdot [D_2]=2$. Hence the intersection matrix of $D_1$ and $D_2$ is given by
$$ M = \left(\begin{matrix} a_1 & 2 \\ 2 & a_2 \\ \end{matrix}\right) .$$ 
Since $D_1, D_2$ form basis the determinant of this matrix must be $\pm 1$. We hence see that the only option for the self-intersection sequence is $(a_1,a_2) = (3,1)$.
\end{proof}

We are now in a position to show that any ample log K3 surface of Picard rank $0$ admits a log K3 structure of of self-intersection sequence $(-1,-1,-1,-1,-1)$.
\begin{prop}\label{p:classification-1}
Let $X$ be an ample log K3 surface of Picard rank $0$. Then $X$ admits an ample log K3 structure of degree $5$ and self-intersection sequence $(-1,-1,-1,-1,-1)$. 
\end{prop}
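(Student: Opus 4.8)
The plan is to begin with an arbitrary ample log K3 structure $(\ovl{X},D,\iota)$ on $X$, of some degree $d$, and to connect it to a structure of degree $5$ by a Galois-equivariant chain of the corner blow-ups and blow-downs of Construction~\ref{c:blow-up-down}. By the preceding corollaries we have $5 \leq d \leq 8$, and $D$ is a cycle of $n = 10-d$ genus $0$ curves with self-intersection sequence $(a_1,\dots,a_n)$ satisfying $\sum_i a_i = 3d-20$ (Lemma~\ref{l:invariant}) and $a_i \geq -1$ (since $\ovl{X}$ is del Pezzo). The case $d=5$ is the target itself: then $n=5$ and $\sum_i a_i = -5$, so each $a_i \geq -1$ forces $a_i=-1$. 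It therefore suffices to connect any structure of degree $6 \leq d \leq 8$ to one of degree $5$.

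First I would pin down the possible self-intersection sequences. The decisive constraint, beyond $\sum_i a_i = 3d-20$ and $a_i \geq -1$, is that by Lemma~\ref{l:basis} the components of $D$ form a basis of $\Pic(\ovl{X}_{\ovl{k}})$, so the intersection matrix of the cycle is unimodular. For a triangle $(a,b,c)$ this determinant equals $abc+1$, and for a square it is a similarly short expression; imposing $\det = \pm 1$ collapses the list dramatically. One finds: $d=8$ gives only $(3,1)$ (this is Proposition~\ref{p:d8}); $d=7$ gives only $(0,0,1)$ and $(2,0,-1)$; and $d=6$ gives only $(-1,-1,0,0)$ and $(-1,-1,-1,1)$ (the alternating arrangement $(-1,0,-1,0)$ is excluded because its Gram matrix is singular). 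In each case the possible Galois actions are exactly the dihedral symmetries of the cycle fixing the self-intersection function, which the repeated values alone determine.

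The core is then to route every such configuration down to the all-$(-1)$ pentagon through two ``hubs'', the degree $7$ configuration $(0,0,1)$ and the degree $6$ configuration $(-1,-1,0,0)$, each of which lies one move above degree $5$. From $(-1,-1,0,0)$ one blows up the corner joining the two $0$-curves, which is always $k$-rational (the only nontrivial admissible symmetry fixes it), landing directly in degree $5$. From $(0,0,1)$ one blows up the two corners incident to the $(+1)$-curve: if the two $0$-curves are defined over $k$ these are two $k$-rational corners, and if Galois swaps them these two corners form a single conjugate pair, so in either case the Galois-stable pair can be blown up, lowering the degree by $2$ to reach degree $5$ with all self-intersections $-1$. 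The remaining configurations feed into these hubs: $(2,0,-1)$ and $(3,1)$ reach $(-1,-1,-1,1)$ by blowing up one, respectively both, of their corners, and $(-1,-1,-1,1)$ reaches $(0,0,1)$ by a single blow-down. Throughout I would use the evident extension of Construction~\ref{c:blow-up-down} to a Galois-stable set of corners (a Galois orbit), which is a morphism defined over $k$ and preserves ampleness as long as the resulting degree stays $\geq 5 > 1$, by the Nakai--Moishezon argument applied over $\ovl{k}$; note also that each blow-up above is performed at a corner both of whose adjacent components have self-intersection $\neq -1$, so no $(-2)$-curve is ever created.

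The step I expect to be the genuine obstacle is the reduction of $(-1,-1,-1,1)$, precisely because this configuration may admit no $k$-rational corner: when Galois reflects the square, fixing the $(+1)$-curve and the opposite $(-1)$-curve and swapping the remaining two, every corner lies in a conjugate pair, so every Galois orbit of corners has size $2$ and blowing one up would overshoot to degree $4$, which is not ample. The same phenomenon occurs for $(3,1)$ when Galois swaps its two corners. The resolution is to descend by a blow-\emph{down} rather than a blow-up: the $(-1)$-curve opposite the $(+1)$-curve is fixed by every admissible symmetry, hence is $k$-rational, and since $n=4\geq 3$ it may be contracted. A direct computation shows this contraction produces exactly the hub $(0,0,1)$, with the two resulting $0$-curves either both defined over $k$ or interchanged by Galois, matching the two cases already handled. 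Thus, after at most one blow-down, every ample log K3 structure funnels into $(0,0,1)$ and then down to the degree $5$ structure with self-intersection sequence $(-1,-1,-1,-1,-1)$.
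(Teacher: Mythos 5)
Your proposal is correct and follows essentially the same route as the paper's proof: the same case analysis over degrees $5 \leq d \leq 8$, the same list of admissible self-intersection sequences, and the same Galois-equivariant corner moves — blowing up the invariant corner of $(-1,-1,0,0)$, blowing up the Galois-stable pair of corners on the $(+1)$-curve of $(0,0,1)$, and, crucially, descending from $(-1,-1,-1,1)$ by blowing \emph{down} the middle $(-1)$-curve rather than blowing up. The only difference is cosmetic: you exclude the impossible sequences $(-1,0,-1,0)$, $(-1,-1,3)$, $(-1,1,1)$ by direct determinant computations at each degree, whereas the paper blows down over $\ovl{k}$ to degree $8$ and invokes Proposition~\ref{p:d8}; both arguments rest on the same unimodularity consequence of Lemma~\ref{l:basis}.
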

\begin{proof}
Let $(\ovl{X},D,\iota)$ be an ample log K3 structure on $X$ of degree $d \geq 5$ and self-intersection sequence $(a_1,...,a_n)$. First assume that $d=5$. By Lemma~\ref{l:invariant} we have $a_1 + a_2 + a_3 + a_4 + a_5 = -5$. Since $\ovl{X}$ is a del Pezzo surface each $a_i \geq -1$ and so $a_i= -1$ for every $i=1,...,5$. 

Now assume that $d=6$. In this case $n=4$ and by Lemma~\ref{l:invariant} we have $a_1 + a_2 + a_3 + a_4 = -2$ and hence $D$ contains either two or three $(-1)$-curves. The possible self-intersection sequences, up to a dehidral permutation, are then $(-1,-1,0,0), (-1,0,-1,0)$ and $(-1,-1,-1,1)$. Let us show that the self-intersection sequence $(-1,0,-1,0)$ cannot occur. For this, one may base change to $\ovl{k}$. One may then blow-down the two $(-1)$-curves and obtain a new ample log K3 structure of degree $8$ and self-intersection sequence $(2,2)$. According to Proposition~\ref{p:d8} this is impossible. It follows that the self-intersection sequence of $(\ovl{X},D,\iota)$ is either $(-1,-1,0,0)$ or $(-1,-1,-1,1)$.

If the self-intersection sequence is $(-1,-1,0,0)$ then the intersection point $P$ of the two $0$-curves must be Galois invariant. We may then perform a corner blow-up at $P$ and obtain new ample log K3 structure of degree $5$ and self-intersection sequence $(-1,-1,-1,-1,-1)$. If the self-intersection sequence is $(-1,-1,-1,1)$ then the $(-1)$-curve that meets two other $(-1)$-curves must be defined over $k$. We may then blow it down to obtain a new ample log K3 structure of degree $7$ and self-intersection sequence $(0,0,1)$. In this log K3 structure the two intersection points which lie on the $1$-curve must be defined over $k$ as a pair. We may then perform a corner blow-up at these two points to obtain an ample log K3 structure of degree $5$ and self-intersection sequence $(-1,-1,-1,-1,-1)$. This covers the case $d=6$.

Let us now assume that $d=7$. In this case $n=3$ and by Lemma~\ref{l:invariant} we have $a_1 + a_2 + a_3 = 1$. The possible self-intersection sequences (up to permutation) are then $(-1,-1,3), (-1,0,2), (-1,1,1)$ and $(0,0,1)$. We now claim that the self-intersection sequences $(-1,-1,3)$ and $(-1,1,1)$ cannot occur. Working again over $\ovl{k}$, we may blow-down one of the $(-1)$-curves and get an ample log K3 structure of degree $8$ and self-intersection sequence $(0,4)$ in the first case and self-intersection sequence $(2,2)$ in the second. According to Proposition~\ref{p:d8}, this is impossible. We next observe that $(-1,0,2)$ has a trivial symmetry group and that the symmetry group of $(0,0,1)$ fixes the vertex between the two $0$-curves. In particular, in either case there must exist an intersection point which is Galois invariant and which does not lie on any $(-1)$-curve. Performing a corner blow-up at this point we obtain an ample log K3 structure of degree $6$ and we can proceed as above.

Let us now assume that $d=8$. By Proposition~\ref{p:d8} the self-intersection sequence must be $(1,3)$. Blowing up the two intersection points we get an ample log K3 structure of degree $6$, and we may proceed as above.
\end{proof}

\begin{rem}\label{r:splitting}
Let $X$ be a log K3 surface and let $(\ovl{X},D,\iota)$ be an ample log K3 structure with splitting field $L$ (see Definition~\ref{d:dual}). Combining Proposition~\ref{p:classification-1} and Corollary~\ref{c:splitting-inv} we may conclude that $X$ admits an ample log K3 structure of self-intersection sequence $(-1,-1,-1,-1,-1)$ and splitting field $L$.
\end{rem}

\begin{rem}
According to~\cite[Theorem 2.1]{Va13} every del Pezzo surface of degree $5$ is rational over its ground field. Proposition~\ref{p:classification-1} now implies that every ample log K3 surface of Picard rank $0$ has a point defined over $k$.
\end{rem}

According to Proposition~\ref{p:classification-1} every ample log K3 surface of Picard rank $0$ admits an ample log K3 structure $(\ovl{X},D,\iota)$ such that $\ovl{X}$ is a del Pezzo surface of degree $5$ and $D$ is a cycle of five $(-1)$-curves. Let 
$$ \DD_5 = \left<\sig,\tau | \sig\tau\sig^{-1} = \tau^{-1}, \tau^5=1\right> $$ 
be the dehidral group of order $10$. A choice of two \textbf{neighbouring} vertices $v_0,v_1 \in G(D)$ yields an isomorphism $T_{v_0,v_1}: \DD_5 \lrar \Aut(G(D))$ which sends $\tau$ to a the rotation of $G(D)$ that maps $v_0$ to $v_1$ and $\tau$ to the reflection of $G(D)$ which fixes $v_0$. Since $\Aut(G(D))$ acts transitively on the set of ordered pairs of neighbours in $G(D)$ it follows that the isomorphism $T_{v_0,v_1}$ is well-defined up to conjugation.

\begin{define}
Let $X$ be a smooth log K3 surface. Given an ample log K3 structure $(\ovl{X},D,\iota)$ of self-intersection sequence $(-1,-1,-1,-1,-1)$ we will denote by 
$$ \rho_{\ovl{X}}: \Gam_k \lrar \Aut(G(D)) $$ 
the action of the Galois group $\Gam_k$ on the dual graph $G(D)$ of $D$. Choosing two neighbouring vertices $v_0,v_1 \in G(D)$ we denote by 
$$ \rho_{\ovl{X}}^{v_0,v_1} \x{\df}{=} T^{-1}_{v_0,v_1} \circ \rho_{\ovl{X}}: \Gam_k \lrar \DD_5 $$
the corresponding composition. As explained above, $T_{v_0,v_1}$ is independent of the choice of $v_0,v_1$ up to conjugation. We will denote by
$$ \alp_{\ovl{X}} \x{\df}{=} [\rho_{\ovl{X}}^{v_0,v_1}] \in H^1(\Gam_k,\DD_5) $$ 
the corresponding non-abelian cohomology class (which is independent of $v_0,v_1$) and will refer to it as the \textbf{characteristic class} of the log K3 structure $\ovl{X}$.
\end{define}

Our next goal is to show that $\alp_{\ovl{X}}$ does not depend on the choice of a log K3 structure $(\ovl{X},D,\iota)$. We begin by observing that the geometric realization $|G(D)|$ is homeomorphic to the $1$-circle and hence the first homology group $H^1(|G(D)|,\ZZ)$ is isomorphic to $\ZZ$. The induced action of $\Aut(G(D))$ in $H^1(|G(D)|,\ZZ)$ yields a homomorphism $\chi:\Aut(G(D)) \lrar \{1,-1\}$. Elements of $\Aut(G(D))$ which are mapped to $1$ acts via orientation preserving maps, i.e., via rotation of the circle, while elements which are mapped to $-1$ act via orientation reversing maps, i.e., via reflections. We will generally refer to elements of the first kind as rotations and elements of the second kind as reflections.

Given an isomorphism $T_{v_0,v_1}: \DD_5 \x{\cong}{\lrar} \Aut(G(D))$ as above we may identify $\chi$ with the homomorphism (denoted by the same name) 
$$ \chi:\DD_5 \lrar \{1,-1\} $$ 
which sends $\tau$ to $1$ and $\sig$ to $-1$. We note that this identification does not depend on the choice of $(v_0,v_1)$. We will refer to $\chi$ as the \textbf{sign homomorphism}. The sign homomorphism $\chi: \DD_5 \lrar \{1,-1\}$ induces a map $\chi_*: H^1(\Gam_k,\DD_5) \lrar H^1(\Gam_k,\mu_2)$. Given a log K3 structure $(\ovl{X},D,\iota)$ of self-intersection sequence $(-1,-1,-1,-1,-1)$ we hence obtain a quadratic character $\chi_*(\alp_{\ovl{X}}) \in H^1(\Gam_k,\mu_2)$. As a first step towards the invariance of $\alp_{\ovl{X}}$ we will show that $\chi(\alp_{\ovl{X}})$ is independent of $\ovl{X}$.

\begin{prop}\label{p:character}
Let $X$ be an ample log K3 surface of Picard rank $0$ and let $(\ovl{X},D,\iota)$ be a log K3 structure of self-intersection sequence $(-1,-1,-1,-1,-1)$. Then for every large enough $l$, the second $l$-adic cohomology with compact support $H^2_c(X_{\ovl{k}},\QQ_l)$ is isomorphic to $\mathbb{Q}_l$ and the action of $\Gam_k$ on $\mathbb{Q}_l$ is given by the quadratic character $\chi(\alp_{\ovl{X}})$.
\end{prop}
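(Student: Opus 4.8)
The plan is to compute $H^2_c(X_{\ovl{k}},\QQ_l)$ through the localization long exact sequence attached to the decomposition of $\ovl{X}$ into the open part $X$ and its closed complement $D$. Writing $j\colon X\hrar\ovl{X}$ and $i\colon D\hrar\ovl{X}$, the short exact sequence of sheaves $0\to j_!\QQ_l\to\QQ_l\to i_*\QQ_l\to 0$ on the proper surface $\ovl{X}$ yields a $\Gam_k$-equivariant long exact sequence
$$ \cdots \to H^n_c(X_{\ovl{k}},\QQ_l)\to H^n(\ovl{X}_{\ovl{k}},\QQ_l)\to H^n(D_{\ovl{k}},\QQ_l)\to H^{n+1}_c(X_{\ovl{k}},\QQ_l)\to\cdots. $$
Since $\ovl{X}_{\ovl{k}}$ is a rational surface (Proposition~\ref{p:rational}) its odd cohomology vanishes and the cycle class map gives a $\Gam_k$-equivariant isomorphism $H^2(\ovl{X}_{\ovl{k}},\QQ_l)\cong\Pic(\ovl{X}_{\ovl{k}})\otimes\QQ_l(-1)$, a $5$-dimensional $\QQ_l$-vector space.

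Next I would compute the cohomology of $D$, a cycle of five $(-1)$-curves, via the weight (Mayer--Vietoris) spectral sequence whose $E_1$-page is assembled from the cohomology of the components $D_i\cong\PP^1$ and of the five nodes. One finds $H^0(D_{\ovl{k}},\QQ_l)=\QQ_l$, an isomorphism $H^2(D_{\ovl{k}},\QQ_l)\cong\bigoplus_i H^2(D_{i,\ovl{k}},\QQ_l)\cong\QQ_l(-1)^{\oplus 5}$, and $H^1(D_{\ovl{k}},\QQ_l)\cong\QQ_l$, the latter being the weight-$0$ cohomology $H^1(|G(D)|,\QQ_l)$ of the dual graph. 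The $\Gam_k$-action on this last group factors through $\rho_{\ovl{X}}\colon\Gam_k\to\Aut(G(D))$, and by the very definition of the sign homomorphism $\chi$ it is the quadratic character $\chi(\alp_{\ovl{X}})$.

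It then remains to identify the restriction map $H^2(\ovl{X}_{\ovl{k}},\QQ_l)\to H^2(D_{\ovl{k}},\QQ_l)$. Under the identifications above it is the map $[L]\mapsto([L]\cdot[D_i])_i$; in the basis $\{[D_i]\}$ it is therefore given by the intersection matrix $M$ of the cycle of $(-1)$-curves. Because the $[D_i]$ form a basis of the unimodular Picard lattice (Lemma~\ref{l:basis}) we have $\det M=\pm1$ --- equivalently one checks directly that this $5\times5$ circulant has determinant $1$ --- so $M$ is invertible and the restriction is injective. Combined with $H^1(\ovl{X}_{\ovl{k}},\QQ_l)=0$, the long exact sequence forces the map $H^2_c(X_{\ovl{k}},\QQ_l)\to H^2(\ovl{X}_{\ovl{k}},\QQ_l)$ to have image $\ker M=0$, and hence yields a $\Gam_k$-equivariant isomorphism $H^1(D_{\ovl{k}},\QQ_l)\cong H^2_c(X_{\ovl{k}},\QQ_l)$. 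Therefore $H^2_c(X_{\ovl{k}},\QQ_l)\cong\QQ_l$ with $\Gam_k$ acting through $\chi(\alp_{\ovl{X}})$.

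The delicate point is the identification of the Galois action on $H^1(D_{\ovl{k}},\QQ_l)$: one must confirm that this group is pure of weight $0$, so that it coincides, Galois-equivariantly and without any intervening Tate twist, with the combinatorial $H^1$ of the dual graph carrying its permutation action, and then match that permutation action with $\chi$. The invertibility of $M$ is routine but genuinely needed in order to kill the contribution of $H^2(\ovl{X}_{\ovl{k}})$. I note that the argument in fact produces the stated isomorphism for every prime $l$, the unimodularity of $M$ making the conclusion insensitive to $l$, so that the hypothesis that $l$ be large is not actually required.
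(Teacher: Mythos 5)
Your proof is correct, and it diverges from the paper's argument precisely at the step where the paper works hardest. Both proofs run the same compact-support long exact sequence and reduce the proposition to two facts. The first --- that $H^2(\ovl{X}_{\ovl{k}},\QQ_l)\to H^2(D_{\ovl{k}},\QQ_l)$ is injective --- is handled equivalently on both sides: the paper quotes $\Br(\ovl{X}_{\ovl{k}})=0$ together with Lemma~\ref{l:basis}, while you observe that in the basis $\{[D_i]\}$ the map is the intersection matrix $M$, which is unimodular for the very same reason; this is one fact in two guises. The genuine divergence is the second fact, the Galois action on $H^1(D_{\ovl{k}},\QQ_l)$. The paper identifies finite \'etale covers of $D_{\ovl{k}}$ with covers of $|G(D)|$ and reads the action off $\pi_1(D_{\ovl{k}},P)\cong\widehat{\ZZ}$; this requires a Galois-fixed intersection point $P$, which exists only when the image of $\rho_{\ovl{X}}$ is trivial or generated by a reflection, and the remaining cases are then settled by a group-theoretic d\'evissage (pass to a degree-$5$ subextension, use that $\chi$ realizes the abelianization of $\DD_5$, and, in the rotation case, that $\QQ_l^*$ has no $5$-torsion once $l\not\equiv 1 \bmod 5$) --- which is exactly where the hypothesis ``for every large enough $l$'' enters. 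Your normalization/Mayer--Vietoris sequence $0\to H^0(D_{\ovl{k}},\QQ_l)\to\bigoplus_i H^0(D_{i,\ovl{k}},\QQ_l)\to\bigoplus_P \QQ_l\to H^1(D_{\ovl{k}},\QQ_l)\to 0$ sidesteps all of this: it is Galois-equivariant (the normalization and the nodes are defined over $k$), its middle terms are permutation modules on the vertices and edges of $G(D)$, so it exhibits $H^1(D_{\ovl{k}},\QQ_l)$ as the simplicial $H^1(|G(D)|,\QQ_l)$ with $\Gam_k$ acting through $\rho_{\ovl{X}}$, and the resulting sign character is $\chi$ by the paper's own definition of $\chi$ as the $\Aut(G(D))$-action on $H^1(|G(D)|,\ZZ)$. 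You thus obtain the conclusion uniformly in $l$ and with no case analysis, and your closing observation is right: the restriction to large $l$ is an artifact of the paper's method, not of the statement. One small adjustment: the ``delicate point'' is better phrased not as purity of weight $0$ (weights over a general field of characteristic $0$ require a spreading-out argument) but simply as the Galois-equivariance of the normalization sequence, which is all your computation actually uses.
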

\begin{proof}

Consider the canonical long exact sequence of cohomology with compact support:
$$ ... \lrar H^1_c\left(X_{\ovl{k}},\QQ_l\right) \lrar H^1\left(\ovl{X}_{\ovl{k}},\QQ_l\right) \lrar H^1\left(D_{\ovl{k}},\QQ_l\right) \lrar  $$ 
$$ \lrar H^2_c\left(X_{\ovl{k}},\QQ_l\right) \lrar H^2\left(\ovl{X}_{\ovl{k}},\QQ_l\right) \lrar H^2\left(D_{\ovl{k}},\QQ_l\right) \lrar  H^3_c\left(X_{\ovl{k}},\QQ_l\right) \lrar ... $$ 

Since $\Br(\ovl{X}_{\ovl{k}}) = 0$ and the components of $D$ form a basis for $\Pic(\ovl{X}_{\ovl{k}})$ it follows that the map $H^2\left(\ovl{X}_{\ovl{k}},\QQ_l\right) \lrar H^2\left(D_{\ovl{k}},\QQ_l\right)$ is an isomorphism. Since $\ovl{X}_{\ovl{k}}$ is simply connected we obtain an isomorphism of Galois modules
$$ H^1\left(D_{\ovl{k}},\QQ_l\right) \cong H^2_c\left(X_{\ovl{k}},\QQ_l\right) .$$

Let us first assume that the image of $\rho_{\ovl{X}}: \Gam_k \lrar \Aut(G(D))$ is either trivial or generated by a reflection. In this case the Galois action on $D$ must fix one of the intersection points $P \in D$. Since $D_{\ovl{k}}$ is a cycle of $5$ rational curves, and each rational curve is simply connected, we see that the category of finite \'etale coverings of $D$ can be identified with the category of finite coverings of the dual graph of $D$ (where we say that a map of graphs is covering if it induces a covering map after geometric realization). We may hence identify $\pi_1(\ovl{D}_{\ovl{k}},P)$ with the pro-finite completion of the fundamental group of $G(D)$, namely, with $\widehat{\ZZ}$. Furthermore, the Galois action on $\pi_1(\ovl{D}_{\ovl{k}},P)$ is given by the action of $\Gam_k$ on $\pi_1(G(D),P)$ via $\rho_{\ovl{X}}$. In particular, the action of $\Gam_k$ on $\pi_1(\ovl{D}_{\ovl{k}},P) \cong \widehat{\ZZ}$ is given by the character $\Gam_k \x{\rho_{\ovl{X}}}\lrar \Aut(G(D)) \x{\chi}{\lrar} \{1,-1\}$. We may hence identify $H^2_c\left(X_{\ovl{k}},\QQ_l\right) \cong H^1\left(D_{\ovl{k}},\QQ_l\right)$ with $\Hom(\widehat{\ZZ},\ZZ_l) \otimes \QQ_l \cong \QQ_l$ and we get that the Galois action is given by the quadratic character $[\chi_*\alp_{\ovl{X}}] \in H^1(\Gam_k,\mu_2)$, as desired.

Let us now assume that the image of $\rho_{\ovl{X}}$ is not trivial and not generated by a reflection. In this case the image of $\rho_{\ovl{X}}$ is either the rotation subgroup or all of $\Aut(G(D))$. It will suffice to prove the claim for a single large enough $l$. We may hence assume that $l-1$ is not divisible by $5$. Let $H \subseteq \Aut(G(D))$ be the image of $\rho_{\ovl{X}}$. If $H$ contains a reflection (i.e., if $H = \Aut(G(D))$) then let $H_0 \subseteq H$ be a subgroup generated by a reflection. If $H$ does not contain a reflection (i.e., $H$ is the rotation subgroup of $\Aut(G(D))$), we let $H_0 = \{1\} \subseteq H$ be the trivial subgroup. Let $L/k$ be the field extension of degree $[H:H_0] = 5$ corresponding to the subgroup $\rho_{\ovl{X}}^{-1}(H_0) \subseteq \Gam_k$
and let $\rho_L: \Gam_L \lrar H_0$ be the natural map. By the argument above we know that $\Gam_L$ acts on $H^2_c(X_{\ovl{k}},\QQ_l) \cong \QQ_l$ via the map 
$$ \Gam_L \x{\rho_L}{\lrar} H_0 \x{\chi|_{H}}{\lrar} \{1,-1\} \lrar \QQ_l^*.$$ 
It follows that $\Gam_k$ acts on $H^2_c(X_{\ovl{k}},\QQ_l)$ via $\rho_{\ovl{X}}$. We need to prove that the induced action of $H$ on $\QQ_l$ is via $\chi$. 
If $H = \Aut(G(D))$ then since $\Aut(\QQ_l) = \QQ_l^*$ is abelian and $\chi$ exhibits $\{1,-1\}$ as the abelianization of $\Aut(G(D))$ we get that any action of $H$ on $\QQ_l$ factors through $\chi$. It is hence left to show that the action of $H$ is non-trivial. But this follows from the fact that in this case $H_0 \subseteq H$ is the subgroup generated by a single reflection, and hence the restricted map $\chi|_{H_0}:H_0 \lrar \{1,-1\}$ is an isomorphism. If $H \subseteq \Aut(G(D))$ is the cyclic subgroup of rotations than we note that by our assumption $\QQ_l^*$ contains no $5$-torsion elements and hence every action of $H$ on $\QQ_l$ is trivial, and in particular given by the trivial homomorphism $\chi|_{H}$.
\end{proof}

Finally, we may now show that $\alp_{\ovl{X}}$ is independent of $\ovl{X}$.
\begin{thm}\label{t:well-defined}
Let $X$ be a log K3 surface and let $(\ovl{X},D,\iota)$ and $(\ovl{X}',D',\iota')$ be two ample log K3 structures of self-intersection sequence $(-1,-1,-1,-1,-1)$. Then $G(D)$ and $G(D')$ are Galois equivariantly isomorphic. In particular, $\alp_{\ovl{X}} = \alp_{\ovl{X}'}$ in $H^1(\Gam_k,\DD_5)$.
\end{thm}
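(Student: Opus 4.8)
The plan is to leverage Corollary~\ref{c:cofiltered}, which tells us that $\Log(X)$ is cofiltered, together with the invariance of the sign character established in Proposition~\ref{p:character}. Since both $(\ovl{X},D,\iota)$ and $(\ovl{X}',D',\iota')$ are log K3 structures on the same $X$, there is a common dominating structure $(Y,E,\eta)$ with maps $p:(Y,E)\lrar(\ovl{X},D)$ and $q:(Y,E)\lrar(\ovl{X}',D')$. By Proposition~\ref{p:cofiltered} we may further assume that $p$ is a composite of corner blow-ups and $q^{-1}$ is a composite of corner blow-downs, so that $E$ is again a cycle of genus $0$ curves whose dual graph $G(E)$ is a circle. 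The key point is that both $G(D)$ and $G(D')$ arise from $G(E)$ by collapsing certain chains of edges back to vertices (contracting the exceptional $(-1)$-curves introduced by the corner blow-ups), and such contractions of a circle are canonically Galois-equivariant: a corner blow-up at a Galois-stable intersection point, or a Galois-stable orbit of such points, inserts new vertices equivariantly, and the reverse contraction simply forgets them. Thus $G(E)$ maps Galois-equivariantly onto both $G(D)$ and $G(D')$ by collapsing the inserted vertices, and since all three graphs are circles, the first step is to argue that these collapsing maps induce a canonical Galois-equivariant identification $G(D)\cong G(D')$.

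Concretely, I would first recall from the proof of Proposition~\ref{p:cofiltered} that the circle $C_m = G(E)$ is obtained from $C_0 = G(D)$ by a sequence of corner blow-ups, each of which subdivides one edge of the current circle into two edges by inserting a new vertex (the exceptional curve). Because $X$ has Picard rank $0$ and the structures are all of type $(-1,-1,-1,-1,-1)$, the degree-$5$ structures $D$ and $D'$ are the minimal ones in their chains, and the inserted vertices in $G(E)$ are exactly those exceptional divisors that must be contracted to recover $D$ (respectively $D'$). I would then set up the cyclic orientation data: an orientation of the circle $G(E)$ restricts to compatible orientations of $G(D)$ and $G(D')$ under the contraction maps, and the Galois group permutes the five non-exceptional vertices of each in a manner compatible with these contractions. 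This yields a Galois-equivariant bijection between the vertex sets of $G(D)$ and $G(D')$ preserving the cyclic (circular) structure, hence a Galois-equivariant isomorphism of graphs $G(D)\cong G(D')$.

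The subtle point, and the step I expect to be the main obstacle, is ruling out a possible orientation ambiguity: an isomorphism of abstract circles of length $5$ is determined only up to the dihedral group, and one must ensure the identification $G(D)\cong G(D')$ is Galois-equivariant \emph{as oriented circles}, not merely as unoriented ones, so that it genuinely conjugates $\rho_{\ovl{X}}$ to $\rho_{\ovl{X}'}$ rather than possibly composing with an orientation-reversing reflection. This is precisely where Proposition~\ref{p:character} enters: the quadratic character $\chi_*(\alp_{\ovl{X}})$ governing the Galois action on $H^2_c(X_{\ovl{k}},\QQ_l)$ is an intrinsic invariant of $X$, independent of the chosen compactification, so $\chi_*(\alp_{\ovl{X}}) = \chi_*(\alp_{\ovl{X}'})$. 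Since $\chi$ detects exactly whether Galois acts through rotations or through reflections, this rigidifies the orientation data and forbids the spurious reflection. Combining the graph-theoretic identification with this constraint gives a Galois-equivariant isomorphism $G(D)\cong G(D')$ respecting orientation; the final step is then purely formal, since a Galois-equivariant isomorphism of the dual graphs conjugates $\rho_{\ovl{X}}^{v_0,v_1}$ to $\rho_{\ovl{X}'}^{v_0',v_1'}$ (after matching base points), whence $\alp_{\ovl{X}} = \alp_{\ovl{X}'}$ in $H^1(\Gam_k,\DD_5)$.
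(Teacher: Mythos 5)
Your proposal follows the same skeleton as the paper (a common dominating structure $(Y,E)$ via Proposition~\ref{p:cofiltered}, plus an appeal to Proposition~\ref{p:character}), but it has a genuine gap at the central step, and the gap is exactly where the real content of the theorem lies. You assert that the two contraction maps $G(E)\lrar G(D)$ and $G(E)\lrar G(D')$ induce a canonical Galois-equivariant bijection between ``the five non-exceptional vertices of each''. This presupposes that the strict transforms $\{\wtl{D}_0,\dots,\wtl{D}_4\}$ and $\{\wtl{D}'_0,\dots,\wtl{D}'_4\}$ coincide as subsets of the components of $E$, i.e., that the same five vertices of $G(E)$ survive both contractions. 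Nothing guarantees this. The paper, after reducing to the case where $\Gam_k$ acts transitively on the components of both $D$ and $D'$, splits into two cases: either the two sets of strict transforms coincide (your situation, the paper's first case), or they are \emph{disjoint}. In the disjoint, ``interleaved'' case each $\wtl{D}'_j$ is contracted by $p$ into an intersection point of $D$, so what your collapsing maps actually produce is a bijection between the vertices of $G(D')$ and the \emph{intersection points} (edges) of $G(D)$ --- not its vertices. There is then no canonical vertex-to-vertex matching, and the naive ``adjacent'' matchings are not Galois-equivariant: a Galois element acting as a reflection fixing a component of $D$ swaps the two neighbouring components of $D'$, and fixes only the antipodal one. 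The paper handles this by identifying $G(D')$ equivariantly with the graph $\widehat{G}(D)$ whose vertices are the intersection points of $D$ (via $P\mapsto q(p^{-1}(P))$), and then composing with the antipodal isomorphism $\widehat{G}(D)\cong G(D)$ --- an identification that is equivariant precisely because the cycle has odd length $5$. This interleaved case is the crux of the proof, and your argument does not address it.

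A secondary point: your use of Proposition~\ref{p:character} (to rule out an orientation-reversing discrepancy at the end) is not how the paper uses it, and by itself it cannot repair the gap above, since equality of the quadratic characters $\chi_*\alp_{\ovl{X}}=\chi_*\alp_{\ovl{X}'}$ does not imply equality of the classes in $H^1(\Gam_k,\DD_5)$. In the paper, Proposition~\ref{p:character} enters earlier and differently: it forces $\rho_{\ovl{X}'}(\Ker\rho_{\ovl{X}})$ into the rotation subgroup of $\Aut(G(D'))$, and since nontrivial rotations act freely on components and intersection points (all of which are defined over the splitting field of $D$), one concludes $\Ker\rho_{\ovl{X}}=\Ker\rho_{\ovl{X}'}$; it also disposes of the case where the image is of order $2$ generated by a reflection. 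So the character argument is used to set up the transitivity reduction, not to fix an orientation at the end. As written, your proof covers only the paper's first (non-interleaved) case.
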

\begin{proof}
By Proposition~\ref{p:cofiltered} we may find a third log K3 structure $(Y,E,\iota_Y)$, equipped with compatible maps
$$ \xymatrix{
& Y \ar_{p}[dl]\ar^{q}[dr] & \\
\ovl{X} && \ovl{X}' \\
}$$
where both $p$ and $q$ can be factored as a sequence of blow-down maps. In particular, every geometric fiber of either $p$ (resp. $q$) is connected, and hence the pre-image under $p$ (resp. $q$) of every connected subscheme is connected. Let $D_0,...,D_4$ be the geometric components of $D$ and let $D'_0,...,D'_4$ be the components of $D'$. Let $\wtl{D}_0,...,\wtl{D}_4$ be the strict transform of the $D_i$'s in $Y$ and let $\wtl{D}'_0,...,\wtl{D}'_4$ be the strict transform of the $D'_i$'s in $Y$.

Let $\Lam \subseteq \Gam_k$ be the kernel of $\rho_{\ovl{X}}$ and let $L/k$ be the finite Galois extension determined by $\Lam$. By Proposition~\ref{p:character} $\rho_{\ovl{X}'}(\Lam)$ is contained in the rotation subgroup of $\Aut(G(D'))$. Since each $\wtl{D}_i$ is defined over $L$ we get that $q(\wtl{D}_i)$ is either a component of $D'$, or a point of $D'$, which in either case are defined over $L$. Since any rotation in $\Aut(G(D'))$ acts freely on the set of points over any field, it follows that $\Lam$ is contained in the kernel of $\rho_{\ovl{X}'}$. Applying the argument in the other direction we may conclude that $\Lam$ is equal to the kernel of $\rho_{\ovl{X}'}$. In particular, all the $D'_i$ are defined over $L$.

Now if $\Lam = \Gam_k$ then clearly $G(D)$ and $G(D')$ are equivariantly isomorphic (and $\alp_{\ovl{X}} = \alp_{\ovl{X}'} = 0$). If $\rho_{\ovl{X}}(\Gam_k) \subseteq \Aut(G(D))$ is a group of order $2$ generated by a reflection then by the above the same is true for $\rho_{\ovl{X}'}(\Gam_k)$ and the desired isomorphism follows from Proposition~\ref{p:character}. We may hence assume that $\Gam_k$ acts transitively on the geometric components of both $D$ and $D'$. 

Since $X$ is not proper and $E$ cannot be a genus $1$ curve, we get from Proposition~\ref{p:rational} and Proposition~\ref{p:cycle} that $E$ is a cycle of genus $0$ curves. We will say that two components $E_0,E_1 \subseteq E$ are \textbf{$p$-neighbours} if they can be connected by a consecutive chain of components $E_0,F_1,F_2,..,F_n,E_1$ such that $p(F_i)$ is a point for every $i=1,...,n$. Similarly, we define the notion of $q$-neighbours. Now if $\wtl{D}_i$ and $\wtl{D}_j$ are $p$-neighbours then $D_i \cap D_j$ must be non-empty. Conversely, if $P \in D_i \cap D_j$ is an intersection point then $p^{-1}(P)$ is connected and must be a chain of curves in $E$ (on which $p$ is constant) which meets both $\wtl{D}_i$ and $\wtl{D}_j$. It follows that $\wtl{D}_i$ and $\wtl{D}_j$ are $p$-neighbours if and only if $D_i$ and $D_j$ are neighbours in $D$.  

We now consider two possible cases. The first case is when $\wtl{D}_0$ coincides with one of the $\wtl{D}_i'$. Since the Galois action is transitive on geometric components we then have that each $\wtl{D}_i$ coincides with one of the $\wtl{D}'_j$. This implies that two components $E_0,E_1 \subseteq E$ are $p$-neighbours if and only if they are $q$-neighbours. By the above we get that the associated $D_i \mapsto q(\wtl{D}_i)$ determines a Galois equivariant isomorphism $G(D) \cong G(D')$, as desired.

Now assume that $\wtl{D}_0$ does not coincides with one of the $\wtl{D}_i'$. In this case, Galois invariance implies that the sets $\{\wtl{D}_0,...,\wtl{D}_4\}$ and $\{\wtl{D}'_0,...,\wtl{D}'_4\}$ are disjoint. Since the Galois action is also transitive on intersection point we may deduce that for each intersection point $P$ of $D$ the curve $p^{-1}(P)$ must contain $\wtl{D}'_j$ for exactly one $j$. Since $p^{-1}(P)$ is connected it follows that $q(p^{-1}(P)) = D'_j$. Now let $\widehat{G}(D)$ denote the graph whose vertices are the intersection points of $D$ and whose edges are the components of $D$. It follows that the association $P \mapsto q(p^{-1}(P))$ determines a Galois equivariant isomorphism of graphs $\widehat{G}(D) \cong G(D')$. Since $\widehat{G}(D)$ and $G(D)$ are equivariantly isomorphic (e.g., by sending each component to the antipodal intersection point) we may conclude that $G(D)$ and $G(D')$ are equivariantly isomorphic.

\end{proof}

\subsection{The classification theorem}

Relying on Proposition~\ref{p:classification-1} and Theorem~\ref{t:well-defined} we may now make the following definition:
\begin{define}
Let $X$ be an ample log K3 surface of Picard rank $0$. We will denote by $\alp_X \in H^1(k,\DD_5)$ the characteristic class $\alp_{\ovl{X}}$ associates to any log K3 structure $(\ovl{X},D,\iota)$ of self-intersection sequence $(-1,-1,-1,-1,-1)$.  
\end{define}

Our goal in this subsection is to prove the following theorem:
\begin{thm}\label{t:main-class-2}
The association $X \mapsto \alp_X$ determines a bijection between the set of $k$-isomorphism classes of ample log K3 surfaces of Picard rank $0$ and the cohomology set $H^1(k,\DD_5)$.
\end{thm}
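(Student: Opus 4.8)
The plan is to reformulate the classification in terms of compactified \emph{pairs} and then invoke Galois descent. By Proposition~\ref{p:classification-1}, every ample log K3 surface $X$ of Picard rank $0$ arises as $X = \ovl{X}\bksl D$ for a pair $(\ovl{X},D)$ in which $\ovl{X}$ is a del Pezzo surface of degree $5$ and $D$ is a cycle of five $(-1)$-curves with $[D]=-K_{\ovl{X}}$. I would first record the geometric input over $\ovl{k}$: there is a unique del Pezzo surface of degree $5$ up to isomorphism, its geometric automorphism group is $S_5$ (permuting the ten $(-1)$-curves), and $S_5$ acts transitively on the set of anticanonical pentagons, i.e.\ cycles of five $(-1)$-curves summing to $-K$ (there are twelve in all). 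Consequently all such pairs become isomorphic over $\ovl{k}$ to a single split pair $(\ovl{X}_0,D_0)$, which may be chosen over $k$ with all components of $D_0$ rational and trivial Galois action on $G(D_0)$, so that $\alp_{X_0}=0$.

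The key computation is the automorphism group of the split pair. Since the stabilizer in $S_5$ of a pentagon is a dihedral group of order $10$ acting faithfully on the five vertices of the pentagon, the geometric automorphism group of the pair is this dihedral group, and its action on the dual graph yields a canonical identification
$$ \Aut_{\ovl{k}}(\ovl{X}_0,D_0) \x{\cong}{\lrar} \Aut(G(D_0)) = \DD_5. $$
With this in hand I would apply nonabelian Galois descent: the set of $k$-isomorphism classes of $k$-forms of $(\ovl{X}_0,D_0)$ is in natural bijection with $H^1(k,\DD_5)$, and because the split action on $G(D_0)$ is trivial, the descent cocycle of a form $(\ovl{X},D)$ coincides, under the identification above, with the Galois representation $\rho_{\ovl{X}}$ on $G(D)$. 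Hence the descent class of $(\ovl{X},D)$ is exactly $\alp_{\ovl{X}}$. Effectivity of descent is harmless here, as the pairs are projective with their anticanonical polarization and $\Aut_{\ovl{k}}$ is finite.

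It remains to transport this classification of pairs to the surfaces themselves. For surjectivity, given a class $\alp \in H^1(k,\DD_5)$ I take the corresponding form $(\ovl{X},D)$ and set $X = \ovl{X}\bksl D$; the properties of being simply connected, of Picard rank $0$, and of carrying an ample log K3 structure are all inherited from $X_0$ through the geometric isomorphism $X_{\ovl{k}}\cong X_{0,\ovl{k}}$, so $X$ is again an ample log K3 surface of Picard rank $0$, and by construction $\alp_X = \alp$. For injectivity, suppose $\alp_X = \alp_{X'}$; choosing log K3 structures $(\ovl{X},D)$ and $(\ovl{X}',D')$ of self-intersection sequence $(-1,-1,-1,-1,-1)$, Theorem~\ref{t:well-defined} shows that both $\rho_{\ovl{X}}$ and $\rho_{\ovl{X}'}$ represent the common class $\alp_X$. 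By the previous paragraph the two pairs then have equal descent classes, hence are $k$-isomorphic, and restricting such an isomorphism to the complements of $D$ and $D'$ yields a $k$-isomorphism $X\cong X'$.

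The main obstacle is the purely geometric input over $\ovl{k}$ isolated in the first two paragraphs: the uniqueness of the degree-$5$ del Pezzo surface, the identification $\Aut_{\ovl{k}}(\ovl{X}_0)\cong S_5$, and above all the transitivity of this action on anticanonical pentagons together with the computation that a pentagon stabilizer is exactly $\DD_5$ and acts as the full dihedral symmetry group of $G(D_0)$. Once these combinatorial facts about the configuration of $(-1)$-curves are in place, the remaining steps are formal consequences of Galois descent and of the well-definedness result Theorem~\ref{t:well-defined}. Note that this argument never needs to compute $\Aut_{\ovl{k}}(X_{0,\ovl{k}})$ of the open surface, nor to decide whether distinct pairs can yield isomorphic complements, since bijectivity of $X\mapsto\alp_X$ follows directly from the descent classification of pairs.
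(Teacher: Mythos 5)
Your proposal is correct, but it packages the argument differently from the paper. The paper proves the theorem as the conjunction of two separate statements: injectivity is Theorem~\ref{t:uniqueness}, where, starting from the equivariant isomorphism $G(D)\cong G(D')$ supplied by Theorem~\ref{t:well-defined}, one extends it to an equivariant isomorphism of the full incidence graphs of all ten $(-1)$-curves using the complementary-pentagon combinatorics of Proposition~\ref{p:d5} (the five curves outside the cycle form a second cycle, with $D_i$ meeting $D'_j$ exactly when $j=2i \bmod 5$), and then lifts this graph isomorphism to a $k$-isomorphism of the del Pezzo surfaces; surjectivity is Theorem~\ref{t:existence}, proved by twisting the split degree-$5$ surface by $\rho$ through $\DD_5\subseteq S_5\cong \Aut(\ovl{X}_{\ovl{k}})$ --- which is precisely the effectivity half of your descent argument. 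You instead isolate the single computation $\Aut_{\ovl{k}}(\ovl{X}_0,D_0)\cong \Aut(G(D_0))=\DD_5$ and let nonabelian Galois descent for pairs deliver both directions at once; in particular you never need the $j=2i$ incidence pattern, whose only role in the paper is to extend pentagon isomorphisms to the whole Petersen graph. Both routes rest on the same geometric inputs: uniqueness of the degree-$5$ del Pezzo surface over $\ovl{k}$, the identification of $\Aut(\ovl{X}_{\ovl{k}})$ with the automorphism group of the Petersen graph, and the transitivity statement of Proposition~\ref{p:d5}. What your formulation buys is uniformity and a clean conceptual statement (forms of the pair correspond to classes in $H^1(k,\DD_5)$), at the cost of the descent formalities: triviality of the induced Galois action on $\Aut_{\ovl{k}}(\ovl{X}_0,D_0)$ (which does follow from your canonical identification with $\Aut(G(D_0))$ and the splitness of $D_0$, though you leave it implicit), effectivity of descent via the anticanonical polarization, and the identification of the descent cocycle with $\rho_{\ovl{X}}$ up to conjugation, all of which you address. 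The paper's decomposition, in exchange, yields the standalone uniqueness statement Theorem~\ref{t:uniqueness} directly at the level of surfaces, without reference to a fixed split model.
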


The proof of Theorem~\ref{t:main-class-2} will occupy the rest of the subsection. We begin by a combinatorial result concerning the configuration of $(-1)$-curves on del Pezzo surfaces of degree $5$.

\begin{prop}\label{p:d5}
Let $\ovl{X}$ be a del Pezzo surface of degree $5$. Then there exist exactly $12$ cycles of five $(-1)$-curves on $\ovl{X}_{\ovl{k}}$, and the automorphism group of $\ovl{X}$ acts transitively on the set of such cycles. Furthermore, if $D_0,...,D_4$ is such a cycle then the $(-1)$-curves which are not in $\{D_0,...,D_4\}$ form a cycle $D_0',..., D_4'$ of length $5$ such that $D_i$ meets $D_j'$ if and only if $j = 2i$ mod $5$.
\end{prop}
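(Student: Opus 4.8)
The plan is to reduce everything to a purely combinatorial statement about the \emph{Petersen graph} together with the action of $S_5$. First I would recall the standard description of $\ovl{X}_{\ovl{k}}$ as the blow-up of $\PP^2$ at four points $P_1,\dots,P_4$ in general position, so that the $(-1)$-curves are precisely the four exceptional divisors $E_1,\dots,E_4$ and the six strict transforms $L_{ij}$ of the lines through $P_i,P_j$, giving exactly ten $(-1)$-curves. Computing intersection numbers ($E_i\cdot E_j=0$ for $i\ne j$, $E_i\cdot L_{jk}=1$ iff $i\in\{j,k\}$, and $L_{ij}\cdot L_{kl}=1$ iff $\{i,j\}\cap\{k,l\}=\emptyset$), I would exhibit the bijection with $\binom{[5]}{2}$ sending $E_i\mapsto\{i,5\}$ and $L_{ij}\mapsto [4]\setminus\{i,j\}$, under which two $(-1)$-curves meet iff the corresponding $2$-subsets are disjoint. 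Thus the intersection graph of the ten $(-1)$-curves is the Kneser graph $K(5,2)$, i.e.\ the Petersen graph, and it is classical that $\Aut(\ovl{X})\cong S_5$ acts on it as the full graph automorphism group $\Aut(K(5,2))=S_5$ by permuting $\{1,\dots,5\}$.

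Next I would translate ``cycle of five $(-1)$-curves'' into ``$5$-cycle in $K(5,2)$.'' Since the Petersen graph has girth $5$, any five $(-1)$-curves $D_0,\dots,D_4$ with $D_i\cdot D_{i+1}=1$ cyclically can have no further incidences (a chord would produce a $3$- or $4$-cycle), so their dual graph is automatically a pentagon, and such configurations are exactly the induced $5$-cycles of $K(5,2)$. The key combinatorial step is the bijection between $5$-cycles of $K(5,2)$ and pentagons (Hamiltonian cycles) on the symbol set $\{1,\dots,5\}$: given a $5$-cycle $(f_0,\dots,f_4)$ of $2$-subsets, any symbol $v$ lies in a collection of pairwise-intersecting $f_i$, which are therefore pairwise non-consecutive in the cycle; as the largest independent set in a pentagon has size $2$, every symbol lies in at most two of the $f_i$. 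Counting incidences ($5\times2=10$ over $5$ symbols) forces each symbol into exactly two, so the $f_i$ form a $2$-regular simple graph on five vertices, necessarily a single pentagon. This map is visibly $S_5$-equivariant and invertible (the disjointness relation among the edges of a pentagon reconstitutes its pentagram $5$-cycle), so the number of cycles of five $(-1)$-curves equals the number $\tfrac{4!}{2}=12$ of pentagons on $\{1,\dots,5\}$, and transitivity of $\Aut(\ovl{X})=S_5$ follows from the transitivity of $S_5$ on Hamiltonian cycles of $K_5$ (equivalently, the stabilizer of a pentagon is dihedral of order $10$, giving orbit size $120/10=12$).

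For the final assertion about the complementary cycle, I would work in the explicit model with symbol set $\ZZ/5$ and first cycle $D_i=\{2i,2i+1\}$. The five $(-1)$-curves off this cycle are exactly the ``diagonal'' $2$-subsets $\{m,m+2\}$, which again form a $5$-cycle (the pentagram). A direct check shows that $\{2i,2i+1\}$ is disjoint from $\{m,m+2\}$ for the \emph{unique} value $m=2i+2$, so each $D_i$ meets exactly one complementary curve; choosing the labeling $D_j'=\{j+2,j+4\}$ of the complementary pentagon then yields precisely $D_i\cdot D_j'=1\iff j\equiv 2i\pmod 5$.

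I expect the main obstacle to be essentially bookkeeping rather than depth: pinning down the identification with $K(5,2)$ together with $\Aut(\ovl{X})\cong S_5$, so that transitivity on pentagons transfers to the surface, and then fixing index conventions for the two complementary pentagons so that the incidence relation comes out in the normalized form $j\equiv 2i\pmod 5$ rather than a shifted variant. None of these steps is hard, but the $j=2i$ normalization is the one place where a careless labeling choice silently alters the stated formula.
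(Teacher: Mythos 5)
Your proof is correct, and its key counting step takes a genuinely different route from the paper's. Both arguments rest on the same classical inputs (ten $(-1)$-curves on a quintic del Pezzo, Petersen incidence graph, $\Aut(\ovl{X}_{\ovl{k}})\cong S_5$ acting as the full graph automorphism group), but where you diverge is in how the $12$ cycles are enumerated. The paper fixes the ``outer'' pentagon $C_0$ of the Petersen graph and classifies every other $5$-cycle by its intersection with $C_0$, showing this intersection is a segment of two or three consecutive vertices and that each choice of three consecutive vertices extends uniquely, giving $5+5+2=12$; transitivity then follows because the stabilizer of $C_0$ is $\DD_5$, of index $12$. You instead set up an $S_5$-equivariant bijection between $5$-cycles of the Kneser graph $K(5,2)$ and Hamiltonian cycles of $K_5$, via the nice incidence count (each symbol lies in at most two of the five $2$-subsets since they would be pairwise non-consecutive, and $5\times 2=10$ incidences over $5$ symbols forces exactly two), so the count becomes $4!/2=12$ and transitivity is immediate from orbit-stabilizer on pentagons. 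Your route buys a cleaner, coordinate-free count with transparent equivariance, and your explicit $\ZZ/5$ model ($D_i=\{2i,2i+1\}$, complementary diagonals $\{m,m+2\}$) actually derives the normalization $j\equiv 2i \pmod 5$ that the paper dismisses as ``straightforward to verify''; the paper's approach is more pictorial and stays entirely inside the Petersen graph. One presentational point: in your last step you should say explicitly that, by the transitivity just established and the $\Aut$-invariance of the incidence assertion, it suffices to verify it for your one chosen cycle --- this is the same reduction the paper makes, and your argument silently relies on it.
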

\begin{proof}
It is well known that $\ovl{X}$ has exactly ten $(-1)$-curves and that two $(-1)$-curves are either skew or meet transversely at a single point. The intersection graph of the $(-1)$-curves is the \textbf{Petersen graph}:

\begin{figure}[h]\label{f:petersen}
\centering
\includegraphics[scale=0.3]{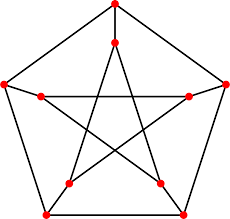}\caption[petersen]{The Petersen graph}
\end{figure}

Let $C_0,C_1$ be the external pentagon and internal pentagons in the Petersen graph. Then $C_0$ and $C_1$ each form a cycle of length five. Furthermore, each vertex of $C_0$ is connected by an edge to exactly one vertex of $C_1$, and vice versa. Now consider a cycle $C = \left<v_0,...,v_4\right>$ of length five which is not $C_0$ or $C_1$. Without loss of generality we may assume that $v_0 \in C_0$ and $v_4 \in C_1$. Since two edges connecting $C_0$ and $C_1$ cannot have a common vertex it follows that the intersection of $C$ with $C_0$ either consists of the segment $\left<v_0,v_1\right>$ or the segment $\left<v_0,v_1,v_2\right>$. We claim that for each choice of three consecutive vertices $w_0,w_1,w_2 \in C_0$ there exists a unique cycle of length five $C = \left<v_0,...,v_4\right>$ such that $v_i = w_i$ for $i=0,1,2$. Indeed, there could be at most one such cycle because $v_3, v_4 \in C_1$ are completely determined by the fact that $v_3$ is connected to $w_2$ and $v_4$ is connected to $w_0$. On the other hand, direct examination shows that this choice indeed gives a cycle, i.e., that $v_3$ and $v_4$ are connected in $C_1$. It follows that there are exactly five cycles of length five whose intersection with $C_0$ contains three vertices and by applying the above argument to $C_1$ instead of $C_0$ we see that there are also exactly five cycles of length five whose intersection with $C_0$ contains two vertices. We have hence counted all together (including $C_0$ and $C_1$ themselves) exactly $12$ cycles of length five. 

Now it is known that the automorphism group $G = \Aut(\ovl{X}_{\ovl{k}})$ of $\ovl{X}_{\ovl{k}}$ is isomorphic to $S_5$. Furthermore, the action of $G$ on the graph of $(-1)$-curves identifies $G$ with the isomorphism group of the Petersen graph. Let $H \subseteq G$ be the subgroup of those transformations mapping $C_0$ to itself. The automorphism group of a cycle of length five is $\DD_5$ and direct observation shows that each transformation in $\DD_5$ can be extended to an automorphism of the Petersen graph, and hence to an automorphism of $\ovl{X}_{\ovl{k}}$. It follows that $H \cong \DD_5$ and hence $H$ has exactly $12$ cosets in $G$, which means that $G$ acts transitively on the set of cycles of length five. 

It will now suffice to show the desired property for just one cycle of length five. In particular, it is straightforward to verify the property for the cycle $C_0$, with the remaining vertices forming the cycle $C_1$.
\end{proof}

We are now in a position to establish the first half of Theorem~\ref{t:main-class-2}, namely, that the invariant $\alp_X \in H^1(\Gam_k,\DD_5)$ determines $X$ up to a $k$-isomorphism.

\begin{thm}\label{t:uniqueness}
Let $X,Y$ be an ample log K3 surfaces of Picard rank $0$. If $\alp_X = \alp_Y$ then $X$ is $k$-isomorphic to $Y$. 
\end{thm}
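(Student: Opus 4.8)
The plan is to reconstruct each surface from its characteristic class by building an explicit del Pezzo surface of degree $5$ together with a Galois-equivariant cycle of five $(-1)$-curves, and then showing that two such surfaces with the same class are isomorphic. By Proposition~\ref{p:classification-1} and Theorem~\ref{t:well-defined}, both $X$ and $Y$ admit ample log K3 structures $(\ovl{X},D,\iota)$ and $(\ovl{Y},E,\iota')$ with $\ovl{X},\ovl{Y}$ del Pezzo of degree $5$, with $D,E$ cycles of five $(-1)$-curves, and with $\alp_{\ovl{X}} = \alp_{\ovl{Y}}$ in $H^1(\Gam_k,\DD_5)$. The strategy is to produce a $k$-isomorphism $\phi:\ovl{X} \x{\cong}{\lrar} \ovl{Y}$ carrying $D$ to $E$; restricting $\phi$ to the complements then yields the desired $k$-isomorphism $X \cong Y$.

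First I would pass to $\ovl{k}$ and use Proposition~\ref{p:d5}: over $\ovl{k}$ any two del Pezzo surfaces of degree $5$ are isomorphic, so fix a geometric isomorphism $\ovl{\phi}: \ovl{X}_{\ovl{k}} \x{\cong}{\lrar} \ovl{Y}_{\ovl{k}}$. Since $\Aut(\ovl{Y}_{\ovl{k}}) \cong S_5$ acts transitively on the $12$ cycles of five $(-1)$-curves (again Proposition~\ref{p:d5}), I may post-compose $\ovl{\phi}$ with an automorphism so that it carries the cycle $D_{\ovl{k}}$ to the cycle $E_{\ovl{k}}$. The equality $\alp_{\ovl{X}} = \alp_{\ovl{Y}}$ means precisely that the Galois actions $\rho_{\ovl{X}}$ and $\rho_{\ovl{Y}}$ on $G(D)$ and $G(E)$ are identified by a graph isomorphism, up to the conjugation ambiguity built into the definition of $\alp$. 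The subgroup $H \subseteq \Aut(\ovl{Y}_{\ovl{k}})$ stabilizing the cycle $E$ is exactly $\DD_5 \cong \Aut(G(E))$, so the residual freedom in choosing $\ovl{\phi}$ (after fixing the cycle) is an $H$-torsor, and the compatibility of the two cocycles under this $H$-action is what will let me adjust $\ovl{\phi}$ into a Galois-equivariant map.

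The key step is a Galois-descent argument. I would show that the class $\alp_{\ovl{X}} = \alp_{\ovl{Y}}$ is exactly the obstruction to choosing $\ovl{\phi}$ Galois-equivariantly among cycle-preserving isomorphisms. Concretely, for each $g \in \Gam_k$ the composite $\ovl{\phi}^{-1}\circ {}^g\ovl{\phi}$ is an automorphism of $\ovl{X}_{\ovl{k}}$ preserving the cycle $D$, hence lies in $H \cong \DD_5$; this assignment is a cocycle whose class is $\alp_{\ovl{X}}^{-1}\cdot\alp_{\ovl{Y}} = 0$ by hypothesis. Triviality of this class provides $h \in H$ with ${}^g h\, (\ovl{\phi}^{-1}{}^g\ovl{\phi}) \,h^{-1} = 1$ for all $g$ in the appropriate cocycle sense, so that $\ovl{\phi}\circ h^{-1}$ descends to a $k$-isomorphism $\phi:\ovl{X}\x{\cong}{\lrar}\ovl{Y}$ still carrying $D$ to $E$. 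Since $\phi$ maps $D = \ovl{X}\bksl X$ isomorphically onto $E = \ovl{Y}\bksl Y$, it restricts to a $k$-isomorphism $X \cong Y$.

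The main obstacle I anticipate is bookkeeping the conjugation ambiguity correctly: the characteristic class $\alp_{\ovl{X}}$ is defined only up to conjugation (because $T_{v_0,v_1}$ depends on a choice of neighbouring vertices), and the stabilizer $H$ acts on itself by conjugation, so I must verify that the equation ``obstruction cocycle'' $=\alp_{\ovl{X}}^{-1}\alp_{\ovl{Y}}$ holds as an equality of genuine cohomology classes in the nonabelian $H^1(\Gam_k,\DD_5)$, not merely up to an uncontrolled conjugation. Getting the twisting conventions consistent between the two surfaces — i.e., checking that the $\DD_5$-identifications $T_{v_0,v_1}$ on $\ovl{X}$ and on $\ovl{Y}$ can be chosen compatibly with $\ovl{\phi}$ — is the delicate point; this is a standard but error-prone descent computation, and it is really the crux of the theorem, the rest being assembly from Propositions~\ref{p:classification-1},~\ref{p:d5} and Theorem~\ref{t:well-defined}.
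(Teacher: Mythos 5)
Your proposal is correct in outline and completable, but it takes a genuinely different route from the paper's proof. The paper never chooses a geometric isomorphism and then descends it: it converts the hypothesis $\alp_X=\alp_Y$ directly into a Galois-equivariant isomorphism of the cycle graphs $G(D)\cong G(E)$, extends this to an equivariant isomorphism of the full incidence graphs of $(-1)$-curves using the complementary-cycle structure of Proposition~\ref{p:d5} (the five remaining $(-1)$-curves form a cycle $D'_0,\dots,D'_4$ with $D_i$ meeting $D'_j$ if and only if $j=2i$ mod $5$, so the extension is forced and automatically equivariant), and then invokes the general theory of degree $5$ del Pezzo surfaces, namely that every equivariant isomorphism of $(-1)$-curve graphs is induced by a $k$-isomorphism $\ovl{X}\cong\ovl{Y}$. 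Your route instead fixes a geometric isomorphism $\ovl{\phi}$ matching the two cycles and measures its failure to be equivariant by the cocycle $c_g=\ovl{\phi}^{-1}\circ{}^g\ovl{\phi}$ with values in the cycle stabilizer $H\cong\DD_5$. The point you flag as delicate is handled cleanly by the standard twisting formalism rather than by any group structure on $H^1$ (your expression $\alp_{\ovl{X}}^{-1}\cdot\alp_{\ovl{Y}}$ is indeed only an abuse of notation, since nonabelian $H^1$ is a pointed set): $\Gam_k$ acts on $H$ by conjugation through $\rho_{\ovl{X}}$, so $c$ is a cocycle valued in the inner form ${}_{\rho_{\ovl{X}}}\DD_5$, transporting $\rho_{\ovl{Y}}$ through the graph isomorphism induced by $\ovl{\phi}$ yields exactly the cocycle $g\mapsto c_g\,\rho_{\ovl{X}}(g)$, and the twisting bijection $H^1(k,{}_{\rho_{\ovl{X}}}\DD_5)\cong H^1(k,\DD_5)$ carries the trivial class to $\alp_X$ and $[c]$ to $\alp_Y$; injectivity of this bijection then forces $[c]$ to be trivial, and the coboundary element produces the descended isomorphism carrying $D$ to $E$. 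Comparing what each approach buys: your argument needs nothing about how the cycle sits inside the Petersen graph beyond transitivity of the automorphism group on cycles and the fact that the stabilizer restricts isomorphically onto $\Aut(G(E))$, but it requires the nonabelian twisting machinery; the paper's argument stays entirely at the level of graphs and avoids cocycle manipulation, at the cost of the extension step (which is where the $j=2i$ incidence relation is used) and of delegating the descent to the cited ``general theory,'' whose proof is essentially the computation you spell out. Both ultimately hinge on the identification $\Aut(\ovl{Y}_{\ovl{k}})\cong\Aut(G)$ for the Petersen graph $G$, which is what makes the descended isomorphism exist and be unique.
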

\begin{proof}
Let $(\ovl{X},D,\iota)$ and $(\ovl{Y},E,\iota)$ be ample log K3 structures of degree $5$ and self-intersection sequence $(-1,...,-1)$ on $X$ and $Y$ respectively. Let $D_0,...,D_4$ be the geometric components of $D$, considered as a cycle of $(-1)$-curves. By Proposition~\ref{p:d5} the $(-1)$-curves of $\ovl{X}$ which are not in $\{D_0,...,D_4\}$ form a cycle $D_0',...,D_4'$ of length five, such that $D_i$ meets $D_j'$ if and only if $j=2i$ mod $5$. It follows that the association $i \mapsto 2j$ identifies the Galois action on the cycle $D_0',...,D_4'$ with the Galois action on the cycle $D_0,...,D_4$. We may hence extend the equivariant isomorphism of the dual graphs of $D$ and $E$ to an equivariant isomorphism of the graphs of $(-1)$-curves of $\ovl{X}$ and $\ovl{Y}$. By the general theory of del Pezzo surfaces of degree $5$, every such isomorphism is induced by an isomorphism $\ovl{X} \x{\cong}{\lrar} \ovl{Y}$, yielding the desired isomorphism $X \x{\cong}{\lrar} Y$. 
\end{proof}

%

Our next goal is to show that any class $\alp \in H^1(k,\DD_5)$ is the characteristic class of some ample log K3 surface of Picard rank $0$. We begin with a few lemmas that will help us determine when certain open varieties are simply-connected.

\begin{lem}\label{l:pushout-group}
Let 
$$ \xymatrix{
K \ar^{f}[r]\ar^{g}[d] & H \ar[d] \\
G \ar[r] & P \\
}$$
be a pushout square of groups. If the map $f$ is surjective and the map $g$ vanishes on the kernel of $f$ then the map $G \lrar P$ is an isomorphism.
\end{lem}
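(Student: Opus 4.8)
The plan is to exploit the universal property of the pushout directly, by constructing an explicit retraction $P \lrar G$ and checking it is a two-sided inverse to the structure map $\iota_G \colon G \lrar P$.

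First I would extract the structural consequence of the two hypotheses. Since $f$ is surjective, the first isomorphism theorem identifies $H$ with $K/\Ker(f)$. Because $g$ vanishes on $\Ker(f)$, the homomorphism $g \colon K \lrar G$ descends to a well-defined homomorphism $\bar g \colon H \lrar G$ satisfying $\bar g \circ f = g$. This factorization is the key input: verifying that $\bar g$ is well-defined (independent of the chosen preimage under $f$) is precisely where the hypothesis that $g$ vanishes on $\Ker(f)$ enters, and it is the only point in the argument that is not purely formal.

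Next I would feed the pair $(\Id_G, \bar g)$ into the universal property of the pushout. The maps $\Id_G \colon G \lrar G$ and $\bar g \colon H \lrar G$ satisfy the compatibility condition $\Id_G \circ g = g = \bar g \circ f$, so the universal property produces a unique homomorphism $r \colon P \lrar G$ with $r \circ \iota_G = \Id_G$ and $r \circ \iota_H = \bar g$, where $\iota_G, \iota_H$ denote the two structure maps into $P$. The relation $r \circ \iota_G = \Id_G$ already exhibits $\iota_G$ as a split injection.

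Finally I would verify that $\iota_G$ is surjective, which completes the proof that it is an isomorphism with inverse $r$. Recalling that the pushout $P = G \ast_K H$ is generated by the images of $\iota_G$ and $\iota_H$, it suffices to show $\iota_H(H) \subseteq \iota_G(G)$. For $h \in H$, surjectivity of $f$ lets me write $h = f(k)$, and commutativity of the pushout square then gives $\iota_H(h) = \iota_H(f(k)) = \iota_G(g(k)) \in \iota_G(G)$. Hence $\iota_G$ is surjective, and combined with the splitting above it is an isomorphism; equivalently one may confirm $\iota_G \circ r = \Id_P$ by evaluating on the generating images of $G$ and $H$. I do not anticipate any serious obstacle: once the factorization $\bar g$ is in hand the remainder is formal, so the main care is simply in establishing that $\bar g$ is well-defined.
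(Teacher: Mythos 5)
Your proof is correct, and it shares its key step with the paper's proof --- namely the observation that, since $f$ is surjective and $g$ kills $\Ker(f)$, the map $g$ descends to a well-defined $\bar g\colon H \lrar G$ with $\bar g \circ f = g$ --- but the two arguments package this differently. The paper stays entirely inside the universal property: it shows that for every group $N$ and every homomorphism $\vphi\colon G \lrar N$ there is a \emph{unique} $\psi\colon H \lrar N$ with $\psi \circ f = \vphi \circ g$ (existence and uniqueness both coming from the factorization argument), which says precisely that precomposition with $G \lrar P$ gives a bijection $\Hom(P,N) \lrar \Hom(G,N)$ for all $N$; equivalently, $G$ itself already satisfies the universal property of the pushout, so $G \lrar P$ is forced to be an isomorphism, with no group-theoretic input beyond that. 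You instead apply the universal property once, to the pair $(\Id_G,\bar g)$, to produce an explicit retraction $r\colon P \lrar G$, and then prove surjectivity of $\iota_G$ by hand, using the fact that a pushout of groups is generated by the images of its two structure maps. Your route has the virtue of exhibiting the inverse concretely, but it imports that generation fact as an extra (true, standard) ingredient about amalgamated free products. Note that you could dispense with it entirely: both $\iota_G \circ r$ and $\Id_P$ restrict to the pair $(\iota_G,\iota_H)$ along the structure maps, so the uniqueness clause of the universal property already gives $\iota_G \circ r = \Id_P$, making your argument as formal as the paper's.
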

\begin{proof}
By the universal property of pushouts it will suffice to show that for every homomorphism of groups $\vphi:G \lrar N$ there exists a unique homomorphism of groups $\psi: H \lrar N$ such that $\psi \circ f = \vphi \circ g$. But this now follows directly from the fact $\vphi \circ g$ vanishes on the kernel of $f$ and that $f$ is surjective.
\end{proof}

\begin{lem}\label{l:simply-connected}
Let $Y$ be a smooth, connected algebraic variety (not necessarily proper) over an algebraically closed field $\ovl{k}$ of characteristic $0$ and let $D \subseteq Y$ be a smooth irreducible divisor. Assume that $Y$ contains a smooth projective rational curve $C \subseteq Y$ which meets $D$ transversely at exactly one point. Let $W = Y \bksl D$. Then the induced map $\pi^{\et}_1(W,Q) \lrar \pi^{\et}_1(Y,Q)$ is an isomorphism.
\end{lem}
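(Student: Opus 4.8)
The plan is to realise the inclusion $W \hookrightarrow Y$ as the removal of a single irreducible divisor and to show that the one ``new'' generator this introduces---a loop around $D$---is already trivial in $\pi^{\et}_1(W,Q)$ because of the transverse curve $C$. Concretely, I would proceed in three steps: prove surjectivity from normality, identify the kernel as the normal closure of the inertia of $D$, and kill that inertia using $C$. Throughout I may assume (after a harmless change of base point) that $Q \in C' := C \cap W$.

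First I would establish that $\pi^{\et}_1(W,Q) \lrar \pi^{\et}_1(Y,Q)$ is surjective. Since $Y$ is smooth, hence normal, and $W \subseteq Y$ is a dense open, any connected finite \'etale cover $Y' \to Y$ stays connected after restriction to $W$: the cover $Y'$ is itself normal and connected, hence irreducible, and $W \times_Y Y'$ is a dense open of $Y'$, hence connected. By the Galois correspondence this forces surjectivity.

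Next I would identify the kernel. As $\ovl{k}$ has characteristic $0$ all ramification is tame, and the theory of the tame fundamental group of the smooth pair $(Y,D)$ shows that the kernel of $\pi^{\et}_1(W,Q) \to \pi^{\et}_1(Y,Q)$ is the smallest closed normal subgroup containing the inertia along $D$. Since $D$ is smooth and irreducible this inertia is, up to conjugacy, a single procyclic subgroup topologically generated by the class $\gamma$ of a small loop encircling $D$. To kill $\gamma$, set $P = C \cap D$; by hypothesis $C \cong \PP^1$ meets $D$ transversely in the single point $P$, so $C' = C \setminus \{P\} \cong \A^1$ and therefore $\pi^{\et}_1(C') = 0$. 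Transversality at $P$ means that in local coordinates with $D = \{x=0\}$ the punctured slice $C'$ contains a small loop around $x=0$ representing exactly the generator $\gamma$ (intersection multiplicity one). Thus $\gamma$ lies in the image of $\pi^{\et}_1(C') \to \pi^{\et}_1(W,Q)$, which is trivial; hence $\gamma = 1$ in $\pi^{\et}_1(W,Q)$. Since $\gamma$ normally generates the kernel, the kernel vanishes and the surjection above is an isomorphism. I emphasise that the hypothesis ``exactly one point'' is essential here: meeting $D$ in two points would give $C' \cong \GG_m$ with $\pi^{\et}_1(C') \cong \what{\ZZ} \neq 0$, and $\gamma$ need not die.

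The main obstacle is the precise execution of the kernel identification, i.e.\ matching the abstract inertia generator of $\pi^{\et}_1(W,Q)$ along $D$ with the geometric meridian supplied by $C$. I expect the cleanest route is to reduce to $\ovl{k} = \CC$---the data $(Y,D,C)$ is of finite type, hence defined over a field embeddable in $\CC$, and $\pi^{\et}_1$ is insensitive to extension of the algebraically closed base field---and then invoke the Riemann existence theorem. On the analytic side a tubular neighbourhood of $D^{an}$ together with a van Kampen argument makes both the meridian description of the kernel and its vanishing inside $C'$ transparent, and passing to profinite completions recovers the \'etale statement.
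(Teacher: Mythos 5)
Your proof is correct, but its primary route is genuinely different from the paper's, so a comparison is worth making. The paper argues transcendentally: it reduces to $\ovl{k}=\CC$, chooses a tubular neighbourhood $U$ of $D(\CC)$ meeting $C(\CC)$ in a tubular neighbourhood of $P$, applies van Kampen to $Y(\CC) = W(\CC) \cup U$, and concludes with an elementary lemma on pushouts of groups: the boundary circle $F$ over $P$ generates the kernel of $\pi_1(U \cap W(\CC)) \rar \pi_1(D(\CC))$, and its image in $\pi_1(W(\CC))$ dies because $F$ lies in $C(\CC) \cap W(\CC) \cong \A^1(\CC)$. Your argument is the algebraic incarnation of the same geometric idea (the meridian of $D$ normally generates the kernel and already bounds in $C \setminus \{P\}$): surjectivity from normality, kernel equal to the closed normal subgroup generated by the tame inertia along $D$ (Zariski--Nagata purity), and inertia killed by $\pi^{\et}_1(C') = \pi^{\et}_1(\A^1) = 1$. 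The step you flag as the main obstacle --- matching the inertia generator with a class coming from $C'$ --- can in fact be done without ever passing to $\CC$: work at the strict henselization $A = \OO_{Y,P}^{\mathrm{sh}}$, choosing regular parameters so that $D = \{x=0\}$ and $C$ is cut out by the remaining ones (possible by transversality); by Abhyankar's lemma the Kummer covers $x \mapsto x^{1/n}$ are cofinal among finite \'etale covers of $\Spec A[1/x]$, and since transversality makes $x$ restrict to a uniformizer on the henselian trace $B$ of $C$, each Kummer cover pulls back to a connected (totally ramified) cover of $\Spec B[1/x]$; hence $\pi_1(\Spec B[1/x]) \rar \pi_1(\Spec A[1/x])$ is surjective, so the image of inertia in $\pi^{\et}_1(W)$ is contained in the image of $\pi^{\et}_1(C')$, which is trivial. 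This is exactly where ``exactly one transverse point'' enters, as you note: a tangential or two-point intersection would break the uniformizer claim or replace $\A^1$ by $\GG_m$. As for what each approach buys: yours avoids the comparison theorem and the reduction to $\CC$ entirely, and it adapts verbatim to the prime-to-$p$ fundamental group in positive characteristic; the paper's is shorter once the comparison theorem is granted, requiring only elementary topology plus its pushout lemma. Be aware, though, that the ``cleanest route'' you offer in your final paragraph --- reduce to $\CC$, take a tubular neighbourhood, run van Kampen --- is precisely the paper's proof, so the algebraic completion sketched above is what keeps your route independent of it.
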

\begin{proof}
By choosing a large enough algebraically closed field which contains both $\ovl{k}$ and the field of complex numbers $\CC$, we may reduce to the case where $\ovl{k} = \CC$. Furthermore, by the comparison theorem we may replace the \'etale fundamental group by the topological fundamental group of the associated space of complex points.

Let $P$ be the intersection point of $C$ and $D$. Since $C$ meets $D$ transversely we may find a tobular neighborhood $D(\CC) \subseteq U \subseteq Y(\CC)$ such that $U \cap C(\CC)$ is a tobular neightborhood of $P$ in $C(\CC)$. Furthermore, we may find a retraction $r: U \lrar D(\CC)$ such that $U \cap C(\CC) = r^{-1}(P)$ and such that $r$ exhibits $U$ as a disc bundle over $D(\CC)$ (isomorphic to the disc bundle associated to the normal bundle of $D(\CC)$ in $Y(\CC)$). Let $E = U \cap W(\CC)$ and
let $q: E \lrar D(\CC)$ be the restriction of $r$ to $E$, so that $q$ is fiber bundle whose fibers are punctured discs (and in particular homotopy equivalent to $S^1$). Let $F = q^{-1}(P)$ and $Q \in F$ be a point. Using Van-Kampen's theorem and the fact that $r: U \lrar D(\CC)$ is a homotopy equivalence we obtain a pushout square of groups
$$ \xymatrix{
\pi_1(E,Q) \ar^{q_*}[r]\ar^{i_*}[d] & \pi_1(D(\CC),Q) \ar[d] \\
\pi_1(W(\CC),Q) \ar[r] & \pi_1(Y(\CC),Q) \\
}$$
Since $q: E \lrar D(\CC)$ is a fiber bundle whose fiber $F = q^{-1}(P)$ is connected it follows that the sequence of groups
$$ \pi_1(F,Q) \lrar \pi_1(E,Q) \lrar \pi_1(D) \lrar 1 $$
is exact. Since $F$ is contained in the contractible space $C(\CC) \cap W(\CC) \cong \A^1(\CC)$ (recall that $C \cong \PP^1$ by assumption) it follows that the composed map $\pi_1(F,Q) \lrar \pi_1(W(\CC),Q)$ is the trivial map. By Lemma~\ref{l:pushout-group} it follows that the map
$$ \pi_1(W(\CC),Q) \lrar \pi_1(Y(\CC),Q) $$
is an isomorphism as desired.
\end{proof}

\begin{cor}\label{c:simply-connected}
Let $\ovl{X}$ be a simply connected algebraic variety over an algebraically closed field $\ovl{k}$ of characteristic $0$. Let $D \subseteq \ovl{X}$ be a simple normal crossing divisor with geometric components $D_0,...,D_{n-1}$. Assume that for every $i=0,...,n-1$ there exists a smooth rational curve $E_i$ such that $D_i$ meets $E_i$ transversely at one point and $D_i \cap E_j = \emptyset$ if $i < j$. Then $X = \ovl{X} \bksl D$ is simply connected. 
\end{cor}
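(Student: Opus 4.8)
The plan is to strip off the geometric components $D_0,\dots,D_{n-1}$ one at a time, in increasing order of index, applying Lemma~\ref{l:simply-connected} at each stage. Set $Y_0 = \ovl{X}$ and, for $0 \leq j \leq n$, let $Y_j = \ovl{X} \bksl (D_0 \cup \cdots \cup D_{j-1})$, so that $Y_n = X$. I would prove by induction on $j$ that each $Y_j$ is smooth, connected, and has trivial \'etale fundamental group. The base case $j = 0$ is exactly the hypothesis on $\ovl{X}$; note that since $D$ is a simple normal crossing divisor, $\ovl{X}$ is smooth, and being simply connected it is in particular connected, hence (smooth plus connected) irreducible, so that every nonempty open subscheme $Y_j$ is again smooth, irreducible and connected. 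The case $j = n$ gives the corollary, since $Y_n = \ovl{X}\bksl D = X$.

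For the inductive step I would apply Lemma~\ref{l:simply-connected} with ambient variety $Y = Y_j$, divisor $D = D_j \cap Y_j$, and auxiliary curve $C = E_j$, whose output is precisely $Y_j \bksl D_j = Y_{j+1}$. Three hypotheses must be checked. First, $Y_j$ is smooth and connected by the inductive setup above. Second, $D_j \cap Y_j$ is obtained from the smooth irreducible divisor $D_j$ by deleting the closed subset $\bigcup_{i<j}(D_i \cap D_j)$; since this is a proper closed subset of the irreducible $D_j$, the open $D_j \cap Y_j$ remains smooth and irreducible, and it is still a divisor in $Y_j$. Third --- and this is the step that uses the precise hypothesis of the corollary --- the curve $E_j$ must lie inside $Y_j$ and meet $D_j \cap Y_j$ transversely in exactly one point.

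The crux is this last verification, and it is exactly where the condition ``$D_i \cap E_j = \emptyset$ for $i < j$'' is needed. This disjointness guarantees that $E_j$ does not meet any of the already-removed components $D_0,\dots,D_{j-1}$, so $E_j \subseteq Y_j$ and $E_j$ survives as a smooth projective rational curve (rather than an affine, ``punctured'' one) --- and it is essential that $C$ be projective in Lemma~\ref{l:simply-connected}, since its proof relies on $C$ meeting the complement in a copy of $\A^1$. The same disjointness shows that the unique intersection point $P = E_j \cap D_j$ lies in $Y_j$ (it is not one of the deleted points), whence $E_j \cap (D_j \cap Y_j) = \{P\}$ is a single transverse point. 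That $E_j$ may still meet the not-yet-removed components $D_i$ with $i > j$ is harmless, as those do not affect the application of the lemma at stage $j$. With the three hypotheses in hand, Lemma~\ref{l:simply-connected} yields an isomorphism $\pi^{\et}_1(Y_{j+1}) \cong \pi^{\et}_1(Y_j)$, completing the induction; composing these isomorphisms for $j = 0,\dots,n-1$ gives $\pi^{\et}_1(X) \cong \pi^{\et}_1(\ovl{X}) = 0$, as desired.
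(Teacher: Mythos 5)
Your proof is correct and takes essentially the same route as the paper: the paper likewise sets $X_r = \ovl{X} \bksl \left[\cup_{i<r} D_i\right]$ and applies Lemma~\ref{l:simply-connected} with $Y = X_r$, $D = D_r$ and $C = E_r$, concluding by induction that each $X_r$ is simply connected. Your extra verifications (that $D_j \cap Y_j$ stays smooth and irreducible, and that the disjointness hypothesis keeps $E_j$ projective and inside $Y_j$ with its single transverse intersection point surviving) are exactly the details the paper leaves implicit.
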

\begin{proof}
For each $r=0,...,n$ let $X_r = \ovl{X} \bksl \left[\cup_{i < r} D_i\right]$, so that in particular $X_0 = \ovl{X}$ and $X_n = X$. Let $Q \in X_n$ be a closed point. For each $r=0,...,n-1$ we may apply Lemma~\ref{l:simply-connected} with $Y = X_r$, $D = D_r$ and $C = E_r$ and deduce that the map $\pi^{\et}_1(X_{r+1},Q) \lrar \pi^{\et}_1(X_{r},Q)$ is an isomorphism. Since $\ovl{X}$ is simply connected it follows by induction that each $X_r$ is simply connected, and in particular $X = X_n$ is simply connected, as desired. 
\end{proof}

\begin{rem}\label{r:simply-connected}
Corollary~\ref{c:simply-connected} can be used to show that the surfaces considered in Examples~\ref{e:d8} and ~\ref{e:d7} are (geometrically) simply connected. In Example~\ref{e:d8} we may apply Corollary~\ref{c:simply-connected} with $E_0$ the inverse image of any line in $\PP^2$ other than $L$ and $E_1$ the exception curve above $P$. In Example~\ref{e:d7} we may apply Corollary~\ref{c:simply-connected} with $E_0$ the inverse image of any line in $\PP^2$ other than $L$ and $E_1,E_2$ the exceptional curves above $P_1$ and $P_2$ respectively.
\end{rem}

\begin{cor}
Let $\ovl{X}$ be a del Pezzo surface of degree $5$ and let $D \subseteq \ovl{X}$ be a cycle of five $(-1)$-curves. Then $\ovl{X} \bksl D$ is simply connected.
\end{cor}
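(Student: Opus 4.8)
The plan is to deduce this directly from Corollary~\ref{c:simply-connected}. Since simple-connectedness is meant here in the geometric sense, I would first base change to $\ovl{k}$ and work with the del Pezzo surface $\ovl{X}_{\ovl{k}}$ of degree $5$; being rational, it is simply connected, so the hypothesis on the ambient surface in Corollary~\ref{c:simply-connected} is met. The divisor $D$ is by assumption a cycle of five $(-1)$-curves $D_0,\dots,D_4$, hence a simple normal crossing divisor whose geometric components are smooth rational curves. It therefore remains only to produce, for each $i$, an auxiliary smooth rational curve $E_i$ meeting $D_i$ transversely at a single point and satisfying the triangular disjointness condition $D_i\cap E_j=\emptyset$ for $i<j$.

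The natural source of such curves is the complementary cycle of $(-1)$-curves furnished by Proposition~\ref{p:d5}. By that proposition, the five $(-1)$-curves of $\ovl{X}_{\ovl{k}}$ not lying in $\{D_0,\dots,D_4\}$ form a second cycle $D'_0,\dots,D'_4$, and the incidence between the two cycles is governed by the rule that $D_i$ meets $D'_j$ precisely when $j\equiv 2i \pmod 5$. I would set $E_i = D'_{2i \bmod 5}$. Each $E_i$ is a $(-1)$-curve, hence a smooth rational curve isomorphic to $\PP^1$, and since any two distinct $(-1)$-curves on a del Pezzo surface are either skew or meet transversely at one point, $E_i=D'_{2i}$ meets $D_i$ transversely at exactly one point.

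The only remaining point is the disjointness condition. By the incidence rule, $D_i\cap E_j=D_i\cap D'_{2j}$ is non-empty exactly when $2j\equiv 2i\pmod 5$; since $2$ is a unit modulo $5$, this happens if and only if $i\equiv j\pmod 5$, i.e.\ if and only if $i=j$ for indices in $\{0,\dots,4\}$. Thus $D_i$ meets $E_j$ only when $i=j$, which in particular gives $D_i\cap E_j=\emptyset$ whenever $i<j$, as required. Applying Corollary~\ref{c:simply-connected} with these $E_i$ then shows that $\ovl{X}_{\ovl{k}}\setminus D$ is simply connected, proving the corollary. The only substantive input is Proposition~\ref{p:d5}'s description of the incidence pattern of the two complementary pentagons; once that is in hand there is no real obstacle, the verification reducing to the elementary observation that doubling is a bijection on $\ZZ/5$.
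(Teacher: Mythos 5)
Your proof is correct and follows essentially the same route as the paper: the paper likewise notes that meeting $(-1)$-curves intersect transversely and then combines Proposition~\ref{p:d5} with Corollary~\ref{c:simply-connected}. Your write-up merely makes explicit the choice $E_i = D'_{2i \bmod 5}$ and the verification (via invertibility of $2$ mod $5$) that the disjointness hypothesis holds, which the paper leaves implicit.
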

\begin{proof}
We first note that any two $(-1)$-curves on $\ovl{X}$ which meet each other do so transversely. The desired result now follows by combining Proposition~\ref{p:d5} and Corollary~\ref{c:simply-connected}.
\end{proof}

\begin{lem}\label{l:basis-2}
Let $\ovl{X}$ be a del Pezzo surface of degree $5$. If $D_0,...,D_4$ is a cycle of $(-1)$-curves then $[D_0],...,[D_4]$ forms a basis for $\Pic(\ovl{X}_{\ovl{k}})$ and $\sum_i [D_i] = -K_{\ovl{X}}$.
\end{lem}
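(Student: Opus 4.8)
The plan is to extract both assertions from the intersection-theoretic data of the cycle together with the structure of $\Pic(\ovl{X}_{\ovl{k}})$ as the standard rank-five unimodular lattice. First I would record the relevant intersection numbers. Since $D_0,\dots,D_4$ is a cycle of $(-1)$-curves, each satisfies $[D_i]\cdot[D_i]=-1$; consecutive curves meet transversally at a single point so that $[D_i]\cdot[D_{i+1}]=1$ (indices mod $5$); and non-adjacent curves are skew, giving $[D_i]\cdot[D_j]=0$ otherwise. By the adjunction formula, $[D_i]\cdot(-K_{\ovl{X}})=1$ for each $i$.

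For the identity $\sum_i[D_i]=-K_{\ovl{X}}$, set $C=\sum_i[D_i]$. The data above give $C\cdot(-K_{\ovl{X}})=5=(-K_{\ovl{X}})^2$ and $C^2 = \sum_i [D_i]^2 + 2\sum_i [D_i]\cdot[D_{i+1}] = -5+10 = 5$. Hence the class $C+K_{\ovl{X}}$ is orthogonal to $-K_{\ovl{X}}$ and satisfies $(C+K_{\ovl{X}})^2 = 0$. Since $\ovl{X}$ is del Pezzo, $-K_{\ovl{X}}$ is ample with $(-K_{\ovl{X}})^2=5>0$, so by the Hodge index theorem the intersection form on $\Pic(\ovl{X}_{\ovl{k}})$ has signature $(1,4)$ and the orthogonal complement of $-K_{\ovl{X}}$ is negative definite; a class lying in it with vanishing self-intersection must be zero. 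Therefore $C=-K_{\ovl{X}}$.

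For the basis statement, I would compute the Gram matrix $M$ of $([D_0],\dots,[D_4])$, which by the intersection numbers above equals $-\Id$ plus the adjacency matrix of the pentagon; its eigenvalues are $-1+2\cos(2\pi k/5)$ for $k=0,\dots,4$, whose product is $\det M = 1$. On the other hand $\Pic(\ovl{X}_{\ovl{k}})\cong\ZZ^5$ carries the unimodular intersection form $\langle 1\rangle\oplus\langle -1\rangle^{\oplus 4}$ of discriminant $1$, in the standard basis $H,E_1,\dots,E_4$ of the blow-up of $\PP^2$ at four general points. Writing $\Lambda'$ for the sublattice spanned by the $[D_i]$, the relation $\det M = [\,\Pic(\ovl{X}_{\ovl{k}}):\Lambda'\,]^2\cdot 1$ forces $[\,\Pic(\ovl{X}_{\ovl{k}}):\Lambda'\,]=1$; and since $\det M\neq 0$ the classes $[D_i]$ are linearly independent, so they form a basis.

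The only genuine computation is the evaluation $\det M=1$, and I expect this to be the sole (mild) obstacle, everything else being formal. One can, however, bypass even this: by Proposition~\ref{p:d5} the automorphism group $S_5$ of $\ovl{X}_{\ovl{k}}$ acts transitively on the twelve cycles of five $(-1)$-curves, and automorphisms act on $\Pic(\ovl{X}_{\ovl{k}})$ by lattice isometries fixing $-K_{\ovl{X}}$. Hence it suffices to verify both claims for one explicit cycle, for instance $E_1,\, H-E_1-E_2,\, H-E_3-E_4,\, E_3,\, H-E_1-E_3$ in the blow-up model, where one checks directly that these five $(-1)$-curves do form a pentagon, that their sum is $3H-E_1-E_2-E_3-E_4=-K_{\ovl{X}}$, and that their coordinate matrix in the basis $H,E_1,\dots,E_4$ has determinant $\pm 1$.
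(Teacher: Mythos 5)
Your proof is correct, but it takes a genuinely different route from the paper's. The paper's proof invokes Proposition~\ref{p:d5}: since $\Aut(\ovl{X}_{\ovl{k}})\cong S_5$ acts transitively on the twelve cycles of five $(-1)$-curves, it suffices to verify both claims for a single cycle, and the paper does this in the blow-up model with the explicit cycle $\left<L_1,E_2,L_2,E_3,L_3\right>$, where $L_i$ is the strict transform of the line through $P_i,P_{i+1}$ --- i.e.\ precisely the bypass you sketch in your final paragraph (and your sample cycle $E_1,\,H-E_1-E_2,\,H-E_3-E_4,\,E_3,\,H-E_1-E_3$ indeed works: it is a pentagon, sums to $3H-E_1-E_2-E_3-E_4=-K_{\ovl{X}}$, and its coordinate matrix has determinant $-1$). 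Your main argument, by contrast, is intrinsic and never reduces to a particular cycle: adjunction gives $[D_i]\cdot(-K_{\ovl{X}})=1$, so $C=\sum_i[D_i]$ satisfies $(C+K_{\ovl{X}})\cdot(-K_{\ovl{X}})=0$ and $(C+K_{\ovl{X}})^2=0$, whence $C=-K_{\ovl{X}}$ by Hodge index (using torsion-freeness of $\Pic(\ovl{X}_{\ovl{k}})$, which holds for rational surfaces); and the Gram determinant of the pentagon, $\prod_{k=0}^{4}\bigl(-1+2\cos(2\pi k/5)\bigr)=1$, together with unimodularity of the Picard lattice, forces the sublattice spanned by the $[D_i]$ to have index $1$. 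Both computations check out (the $k=1,2$ and $k=3,4$ eigenvalue pairs each multiply to $\frac{\sqrt5-3}{2}\cdot\bigl(-\frac{3+\sqrt5}{2}\bigr)=1$). What your route buys: it is independent of Proposition~\ref{p:d5} and of the identification of $\Aut(\ovl{X}_{\ovl{k}})$ with the symmetries of the Petersen graph, and it isolates the lattice-theoretic reason the lemma holds --- unimodularity plus the numerical accident $\det(A(C_5)-\Id)=1$, which fails for instance for a hexagon of $(-1)$-curves on a quartic del Pezzo. What the paper's route buys: Proposition~\ref{p:d5} is already established and needed elsewhere (e.g.\ in Theorem~\ref{t:uniqueness}), so granting it, the verification reduces to one line of linear algebra with explicit classes, requiring neither the Hodge index theorem nor any discriminant bookkeeping.
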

\begin{proof}
By Proposition~\ref{p:d5} it will suffice to prove this for a single choice of cycle. Since this is a geometric property we may as well extend our scalars to $\ovl{k}$ and identify $\ovl{X}$ with the blow-up of $\PP^2$ and four suitably generic points $P_1,P_2,P_3,P_4 \in \PP^2(\ovl{k})$. For $i=1,2,3$ let $L_i \subseteq \ovl{X}_{\ovl{k}}$ be the strict transform of the line in $\PP^2$ passing through $P_i$ and $P_{i+1}$ and for $j=1,...,4$ let $E_j$ be the exceptional curve associated to $P_j$. Then we have a cycle of $(-1)$-curves given by $\left<L_1,E_2, L_2, E_3, L_3\right>$. It is then straightforward to verify that these curves form a basis for $\Pic(\ovl{X}_{\ovl{k}})$, and that $\sum_i [D_i] = -K_{\ovl{X}}$.
\end{proof}

We now have what we need in order to establish the second half of Theorem~\ref{t:main-class-2}.
\begin{thm}\label{t:existence}
Let $k$ be a field of characteristic $0$ and let $\rho: \Gam_k \lrar \DD_5$ be a homomorphism. Then there exists a log K3 surface $X$ equipped with an ample log K3 structure $(\ovl{X},D,\iota)$ of self-intersection sequence $(-1,-1,-1,-1,-1)$ such that $\rho_{\ovl{k}} = \rho$.
\end{thm}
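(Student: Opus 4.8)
The plan is to produce $X$ by Galois descent, twisting the split del Pezzo surface of degree $5$ by the homomorphism $\rho$. Let $\ovl{X}_0$ denote the split del Pezzo surface of degree $5$ over $k$, for instance the blow-up of $\PP^2_k$ at four $k$-rational points in general position (equivalently $\ovl{M}_{0,5}$). Over $\ovl{k}$ all ten of its $(-1)$-curves and all their intersection points are already defined over $k$, so $\Gam_k$ acts trivially on the configuration of $(-1)$-curves. By Proposition~\ref{p:d5} we may fix a cycle $C_0 = \left<D_0,\dots,D_4\right>$ of five $(-1)$-curves whose stabilizer $H \subseteq \Aut(\ovl{X}_{0,\ovl{k}}) \cong S_5$ is isomorphic to $\DD_5$ and acts on $C_0$ as the full automorphism group $\Aut(G(C_0))$. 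Composing $\rho$ with an identification $\DD_5 \x{\cong}{\lrar} H \subseteq \Aut(\ovl{X}_{0,\ovl{k}})$ yields a continuous homomorphism $\Gam_k \lrar \Aut(\ovl{X}_{0,\ovl{k}})$, and since the automorphism group is a constant finite group this homomorphism is automatically a $1$-cocycle.

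Next I would form the twist $\ovl{X} = {}^{\rho}\ovl{X}_0$. Concretely, I endow $\ovl{X}_{0,\ovl{k}}$ with the twisted semilinear action $\sigma \mapsto \rho_\sigma \circ \sigma$; the cocycle condition guarantees this is again a Galois action. The twisted action fixes the anticanonical class $-K_{\ovl{X}_0}$ (every automorphism preserves the canonical class), so a suitable power of $\OO(-K)$ provides a Galois-equivariant very ample line bundle, descent along the twisted action is effective, and $\ovl{X}$ is a genuine projective $k$-variety becoming isomorphic to $\ovl{X}_{0,\ovl{k}}$ over $\ovl{k}$; in particular $\ovl{X}$ is again a del Pezzo surface of degree $5$. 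Because $H$ stabilizes $C_0$, the union $D = D_0 \cup \cdots \cup D_4$ is stable under the twisted action and hence descends to a divisor of $\ovl{X}$ defined over $k$, and by construction the induced Galois action on the set of geometric components of $D$, hence on $G(D)$, is exactly $\rho$ read through the identification $\DD_5 \cong H \cong \Aut(G(D))$.

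It then remains to verify that $(\ovl{X},D,\iota)$ is an ample log K3 structure with the prescribed invariant. Geometrically $\ovl{X}_{\ovl{k}} \cong \ovl{X}_{0,\ovl{k}}$ and $D_{\ovl{k}}$ is a cycle of five $(-1)$-curves, so by Lemma~\ref{l:basis-2} the classes $[D_i]$ form a basis of $\Pic(\ovl{X}_{\ovl{k}})$ with $[D] = \sum_i [D_i] = -K_{\ovl{X}}$; since $-K_{\ovl{X}}$ is ample this is an ample log K3 structure of self-intersection sequence $(-1,-1,-1,-1,-1)$. Lemma~\ref{l:basis} then gives $\Pic(X_{\ovl{k}}) = 0$, and Proposition~\ref{p:d5} together with Corollary~\ref{c:simply-connected} shows that $X = \ovl{X} \bksl D$ is geometrically simply connected, so $X$ is an ample log K3 surface of Picard rank $0$. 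Finally, unwinding the definition of the characteristic class: since $\rho_{\ovl{X}}$ is by construction the composite of $\rho$ with our identification $\DD_5 \cong \Aut(G(D))$, and since the class $\alp_{\ovl{X}} = [\,T^{-1}_{v_0,v_1}\circ\rho_{\ovl{X}}\,]$ is independent of the chosen neighbouring pair $(v_0,v_1)$, one gets $\alp_{\ovl{X}} = [\rho]$, which is the required equality $\rho_{\ovl{X}} = \rho$ in $H^1(\Gam_k,\DD_5)$.

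The main obstacle I anticipate is the justification of the twisting step: one must know that the cocycle really takes values in the automorphism group of the split surface, which is precisely where Proposition~\ref{p:d5}, identifying $\Stab(C_0)\cong\DD_5$ with $\Aut(G(C_0))$, is essential, and that the descent along it is effective, which is why I would emphasize the Galois-invariant anticanonical polarization. A secondary bookkeeping point is matching the abstract presentation of $\DD_5$ used to define $\alp_{\ovl{X}}$ with the copy of $\DD_5$ sitting inside $S_5$, so that the realized class is $[\rho]$ on the nose rather than merely a class with the same image under $\chi_*$; this is resolved by the insensitivity of $\alp_{\ovl{X}}$ to the choice of $(v_0,v_1)$ up to conjugation.
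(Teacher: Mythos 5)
Your proposal is correct and follows essentially the same route as the paper: twist the split degree-$5$ del Pezzo surface (blow-up of $\PP^2$ at four rational points) by the cocycle obtained from composing $\rho$ with the identification of $\DD_5$ with the stabilizer of a five-cycle of $(-1)$-curves inside $\Aut(\ovl{X}_{\ovl{k}}) \cong \Aut(\text{Petersen graph})$, then descend the cycle and invoke Proposition~\ref{p:d5}, Lemma~\ref{l:basis}, Lemma~\ref{l:basis-2} and Corollary~\ref{c:simply-connected}. Your write-up is in fact slightly more careful than the paper's, which takes the effectivity of the descent for granted, whereas you justify it via the Galois-invariant anticanonical polarization.
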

\begin{proof}
Let $\ovl{X}$ be the blow-up of $\PP^2$ at $4$ points in general position which are all defined over $k$. Then $\ovl{X}$ is a del Pezzo surface of degree $5$. Let $G$ be the incidence graph of $(-1)$-curves on $X$ (which is isomorphic to the Petersen graph, see~\ref{f:petersen}). By our construction the Galois action on $G$ is trivial. It is well-known that the action of $\Aut(\ovl{X}_{\ovl{k}})$ on $G$ induces an isomorphism $\Aut(\ovl{X}_{\ovl{k}}) \cong \Aut(G)$. Let us now choose a cycle $C$ of length five in $G$, and let $H \subseteq G$ be the stabilzer of this cycle. Then $H$ is isomorphic to $\DD_5$. Composing $\rho: \Gam_k \lrar \DD_5$ with the inclusion $\DD_5 \cong H \subseteq G$ we obtain a homomorphism $\sig:\Gam_k \lrar \Aut(G) \cong \Aut(\ovl{X}_{\ovl{k}})$. Twisting $\ovl{X}$ by $\sig$ we obtain a new del Pezzo surface $X^{\sig}$ over $k$. Since $\sig$ factors through the stablizer of $C$ we see that $C$ will be Galois stable in $\ovl{X}^{\sig}$. Let $D$ be the union of $(-1)$-curves in $C$. It now follows from Proposition~\ref{p:d5}, Lemma~\ref{l:basis}, Lemma~\ref{l:basis-2} and Corollary~\ref{c:simply-connected} that $X = \ovl{X} \bksl D$ is a log K3 surface. Furthermore, by construction we have $\rho_{\ovl{X}} = \rho$.
\end{proof}

We are now in a position to deduce our main result of this subsection.
\begin{proof}[Proof of Theorem~\ref{t:main-class-2}]
By Theorem~\ref{t:existence} every element $\alp \in H^1(k,\DD_5)$ is the characteristic class $\alp_X$ of some ample log K3 surface of Picard rank $0$. By Theorem~\ref{t:uniqueness} the invariant $\alp_X$ determines $X$ up to an isomorphism. It follows that the association $X \mapsto \alp_X$ determines a bijection between the set of $k$-isomorphism types of ample log K3 surfaces of Picard rank $0$ and the Galois cohomology set $H^1(k,\DD_5)$.
\end{proof}

\subsection{Quadratic log K3 surfaces}

\begin{define}
Let $X$ be an ample log K3 surface of Picard rank $0$. We will say that $X$ is \textbf{quadratic} with character $\chi \in H^1(X,\mu_2)$ if there exists a homomorphism $f:\{1,-1\} \lrar \DD_5$ such that $\alp_X = f_*(\chi)$.
\end{define}

\begin{rem}\label{r:quadratic}
By Remark~\ref{r:splitting} we see that a log K3 surface $X$ of Picard rank $0$ is quadratic if and only if $X$ admits an ample log K3 structure with a quadratic splitting field $L/k$, in which case $\chi$ is the quadratic character associated to $L$.
\end{rem}

\begin{define}
Given a non-zero element $a \in k^*$ we will denote by $[a] \in H^1(k,\mu_2)$ the class corresponding to the quadratic extension $k(\sqrt{a})/k$.
\end{define}

\begin{prop}\label{p:realization-2}
Let $X$ be a log K3 surface of Picard rank $0$ which is quadratic with character $\chi = [a] \in H^1(k,\{1,-1\})$. Then $X$ is isomorphic over $k$ to the affine surface in $\A^3$ given by 
\begin{equation}\label{e:quadratic} 
(x^2 - ay^2)t = y - 1
\end{equation}
\end{prop}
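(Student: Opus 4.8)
The plan is to avoid any direct geometric study of the given surface $X$ and instead exhibit the affine surface of~\eqref{e:quadratic} as an explicit member of the family of Example~\ref{e:d7}, computing its characteristic class and then invoking the classification. Concretely, write $X_0 \subseteq \A^3$ for the surface $(x^2 - ay^2)t = y-1$. The strategy is: (i) recognize $X_0$ as the Example~\ref{e:d7} surface with parameters $b=0$, $c=1$, $d=-1$; (ii) read off $\alp_{X_0}$; (iii) match it with $\alp_X$ and conclude via Theorem~\ref{t:uniqueness}.

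For step (i) I would use the projection $(x,y,t) \mapsto (x:y:1) \in \PP^2$, which is an isomorphism over $\{x^2 - ay^2 \neq 0\}$ and has a full line of values of $t$ over each of the two points $P_0,P_1 = (\pm\sqrt{a}:1:1)$. This realizes $X_0$ as the complement, in the blow-up $\ovl{X_0}$ of $\PP^2$ at the conjugate pair $P_0,P_1$, of the divisor $D = L \cup \wtl{L}_1 \cup \wtl{L}_2$, where $L$ is the line at infinity $\{z=0\}$ and $L_1,L_2$ are the conjugate lines $\{x = \pm\sqrt{a}\,y\}$. I would then verify the nondegeneracy hypotheses of Example~\ref{e:d7}: $P_0 \neq P_1$ since $a \in k^*$; $L$ is defined over $k$ and misses $\{P_0,P_1\}$; $L_1$ passes through exactly one of the two points and $L_2$ through the other; the node $L_1 \cap L_2 = (0:0:1)$ is not on $L$; and the line $g = 0$, here $y - z = 0$, is defined over $k$ and passes through both $P_0$ and $P_1$. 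These are all immediate substitutions.

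With the identification established, Example~\ref{e:d7} shows $X_0$ is an ample log K3 surface of Picard rank $0$, and Example~\ref{e:d7-sig} shows its degree-$7$ log K3 structure has self-intersection sequence $(0,0,1)$ and splitting field $k(\sqrt{a})$, with the generator of $\Gal(k(\sqrt{a})/k)$ acting on the triangle $G(D)$ by the reflection fixing $L$ and swapping $\wtl{L}_1,\wtl{L}_2$. By Corollary~\ref{c:splitting-inv} the splitting field $k(\sqrt{a})$ is an invariant of $X_0$, so by Remark~\ref{r:quadratic} the surface $X_0$ is quadratic with character $[a]$; equivalently $\alp_{X_0} = f_*([a])$ for the homomorphism $f \colon \{1,-1\} \lrar \DD_5$ sending $-1$ to a reflection. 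Since $X$ is quadratic with character $[a]$ by hypothesis, $\alp_X = f_*([a]) = \alp_{X_0}$, and Theorem~\ref{t:uniqueness} (equivalently, the bijection of Theorem~\ref{t:main-class-2}) produces a $k$-isomorphism $X \cong X_0$.

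The content beyond bookkeeping is twofold. First, one must confirm the characteristic class is matched exactly: the relevant point is that all reflections in $\DD_5$ are conjugate (as $5$ is odd), so the reflection homomorphism $f$ is unique up to conjugacy and $f_*$ is injective on nonzero classes, whence ``quadratic with character $[a]$'' pins down $\alp_X$ unambiguously. Second, I expect the main subtlety to be a uniform treatment of the case where $a$ is a square: then $[a] = 0$, the points and lines become $k$-rational, and one must check that the configuration degenerates to a split structure with $\alp_{X_0} = 0 = f_*([a])$, so that the statement remains correct across all square classes of $a$.
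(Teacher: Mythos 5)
Your proposal is correct and follows essentially the same route as the paper's own proof: recognize equation~\eqref{e:quadratic} as a particular case of Examples~\ref{e:d7} and~\ref{e:d7-sig}, deduce via Remark~\ref{r:quadratic} that it is quadratic with character $[a]$, and conclude by Theorem~\ref{t:uniqueness}. The additional material you supply (the explicit blow-up identification, the conjugacy of reflections in $\DD_5$, and the degenerate case where $a$ is a square) is a more careful elaboration of steps the paper leaves implicit, not a different argument.
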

\begin{proof}
Equation~\ref{e:quadratic} is a particular case of Examples~\ref{e:d7} and~\ref{e:d7-sig} and hence defines a log K3 surface which possess an ample log K3 structure $(\ovl{X},D,\iota)$ of self-intersection sequence $(0,0,1)$ and splitting field $k(\sqrt{a})$. By Remark~\ref{r:quadratic} we see that $X$ is quadratic with character $\chi = [a]$. Finally, by Theorem~\ref{t:uniqueness} it follows that every quadratic log K3 structure with character $[a]$ is isomorphic to $X$.
\end{proof}

\begin{prop}\label{p:realization}
Let $X$ be an ample log K3 surface of Picard rank $0$. If $\alp_X = 0$ then $X$ is $k$-isomorphic to the affine surface in $\A^3$ given by the equation 
\begin{equation}\label{e:explicit}
(xy - 1)t = x - 1
\end{equation}
\end{prop}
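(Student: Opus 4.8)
The plan is to follow the same strategy as the proof of Proposition~\ref{p:realization-2}: rather than analyze the given $X$ directly, I would show that the affine surface $X_0 \subseteq \A^3$ defined by $(xy-1)t = x-1$ is itself an ample log K3 surface of Picard rank $0$ with characteristic class $\alp_{X_0} = 0$, and then invoke the uniqueness statement of Theorem~\ref{t:uniqueness} to conclude that any $X$ with $\alp_X = 0$ is $k$-isomorphic to $X_0$.

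First I would identify $X_0$ as a special case of the construction in Example~\ref{e:d8}, corresponding to the parameter values $a=1$, $b=c=0$, $d=-1$, $e=1$, $f=-1$ in the normal form $(axy+bx+cy+d)t = ex+f$. To make this rigorous I would exhibit the underlying geometric data explicitly: homogenizing, the conic $C$ is $xy = z^2$, the line $L$ is the line at infinity $z=0$, the two intersection points of $C$ and $L$ are $Q_0 = (1:0:0)$ and $Q_1 = (0:1:0)$, and the blown-up point is $P = (1:1:1)$. I would then check the non-degeneracy hypotheses of Example~\ref{e:d8}, namely that $Q_0,Q_1$ are distinct $k$-points, that $P$ lies on $C$ but not on $L$, and that $P \notin \{Q_0,Q_1\}$. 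Granting these, Example~\ref{e:d8} together with Remark~\ref{r:simply-connected} shows that $X_0$ is an ample log K3 surface of Picard rank $0$.

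Next I would compute the characteristic class $\alp_{X_0}$. By Example~\ref{e:d8-sig}, the log K3 structure on $X_0$ arising from this construction has self-intersection sequence $(3,1)$ and, crucially, the Galois action on its dual graph is trivial; equivalently, its splitting field is $k$. By Corollary~\ref{c:splitting-inv} the splitting field is an invariant of $X_0$, independent of the chosen log K3 structure, so the log K3 structure of self-intersection sequence $(-1,-1,-1,-1,-1)$ furnished by Proposition~\ref{p:classification-1} also has splitting field $k$, i.e. a trivial Galois action on its (pentagonal) dual graph. Hence the associated representation $\rho_{\ovl{X}}$ is trivial and $\alp_{X_0} = 0$. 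Applying Theorem~\ref{t:uniqueness} to the pair $X, X_0$, both of which have vanishing characteristic class, then yields the desired $k$-isomorphism $X \cong X_0$.

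The only genuinely computational step is the verification that the stated equation really is an instance of Example~\ref{e:d8} with the two intersection points of $C$ and $L$ defined over $k$, so that the Galois action on the dual graph is indeed trivial. Once this bookkeeping is in place, everything else is a formal consequence of results already established, and I expect this (mild) verification of non-degeneracy and $k$-rationality of the configuration to be the main obstacle.
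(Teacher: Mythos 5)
Your proposal is correct and follows essentially the same route as the paper: the paper's proof likewise identifies the surface $(xy-1)t = x-1$ as an instance of Examples~\ref{e:d8} and~\ref{e:d8-sig} (self-intersection sequence $(3,1)$, splitting field $k$), deduces $\alp_X = 0$ from the invariance of the splitting field (via Remark~\ref{r:quadratic}, which packages exactly the combination of Proposition~\ref{p:classification-1} and Corollary~\ref{c:splitting-inv} that you spell out), and concludes by Theorem~\ref{t:uniqueness}. Your explicit verification of the configuration ($C\colon xy=z^2$, $L\colon z=0$, $Q_0=(1{:}0{:}0)$, $Q_1=(0{:}1{:}0)$, $P=(1{:}1{:}1)$) is just a more detailed account of the step the paper leaves implicit.
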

\begin{proof}

Equation~\ref{e:quadratic} is a particular case of Examples~\ref{e:d8} and~\ref{e:d8-sig} and hence defines a log K3 surface which possess an ample log K3 structure $(\ovl{X},D,\iota)$ of self-intersection sequence $(3,1)$ and splitting field $k$. By Remark~\ref{r:quadratic} we see that $X$ is quadratic with character $\chi = 0$, and hence $\alp_X = 0$. By Theorem~\ref{t:uniqueness} it follows that every log K3 structure with $\alp_X = 0$ is isomorphic to $X$. 
\end{proof}

\section{Integral points}
\subsection{Zariski density}
Our goal in this section is to prove the following theorem:
\begin{thm}\label{t:zariski}
Let $\X$ be a separated smooth scheme of finite type over $\ZZ$ such that $X = \X \otimes_{\ZZ} \QQ$ is an ample log K3 surface of Picard rank $0$ and trivial characteristic class. Then the set of integral points $\X(\ZZ)$ is not Zariski dense.
\end{thm}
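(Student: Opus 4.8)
The plan is to reduce to the completely explicit model provided by Proposition~\ref{p:realization} and then to combine an elementary archimedean inequality with a $p$-adic compactness argument. Since $\alp_X = 0$, Proposition~\ref{p:realization} gives a $\QQ$-isomorphism of $X$ with the affine surface
\[
X_0 = \{(x,y,t) \in \A^3 : xyt = x + t - 1\}
\]
(rewriting $(xy-1)t = x-1$ as $xyt = x+t-1$). I fix once and for all the three coordinate functions $x,y,t \in \OO(X)$; these are regular on $X$ because their polar loci lie on the boundary divisor $D$, away from the affine surface. The goal is to show that the image of $\X(\ZZ)$ in $X(\QQ) = X_0(\QQ)$ is contained in a finite union of fibres of the maps $x$, $y$ and $t$. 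Such fibres are curves, so a finite union of them is a proper Zariski-closed subset of $X$, and the non-density follows.

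First I would establish a uniform bound on denominators: there is an integer $M \geq 1$, depending only on $\X$, such that $x(P),y(P),t(P) \in \frac{1}{M}\ZZ$ for every $P \in \X(\ZZ)$. Indeed, $\X(\ZZ) = X(\QQ) \cap \prod_p \X(\ZZ_p)$, and for each prime $p$ the set $\X(\ZZ_p)$ is compact in the $p$-adic topology since $\X$ is separated and of finite type and $\ZZ_p$ is compact. As $x,y,t$ have no poles on $X = \X_\QQ$, they define continuous maps $\X(\ZZ_p) \to \QQ_p$ whose images are compact, hence bounded, giving a lower bound $v_p \geq -C_p$ on those coordinates along $\X(\ZZ)$. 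For all but finitely many $p$ the coordinates are regular on $\X \otimes \ZZ_p$, so there $C_p = 0$; setting $M = \prod_p p^{C_p}$ over the finitely many bad primes yields the claim.

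Second, a direct archimedean estimate on $xyt = x + t - 1$ shows that at least one coordinate of every real point is small: if $|x|,|t| \geq 2$ then
\[
|x|\,|y|\,|t| = |x + t - 1| \le |x| + |t| + 1 \le \tfrac{5}{4}\,|x|\,|t|,
\]
whence $|y| \le \tfrac{5}{4} < 2$; so in every case $\min(|x|,|y|,|t|) < 2$. Combining the two steps, for $P \in \X(\ZZ)$ the small coordinate lies in the finite set $\frac{1}{M}\ZZ \cap (-2,2)$, and therefore $P$ lies on one of the finitely many curves $\{x = c\}$, $\{y = c\}$ or $\{t = c\}$ with $c$ in this set. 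This proves $\X(\ZZ)$ is not Zariski dense.

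The archimedean inequality is the easy part; the step that requires care — and which is precisely why the statement holds for an \emph{arbitrary} smooth model — is the uniform denominator bound. It is essential that $M$ is allowed to depend on $\X$: for a larger ring $\OO_S$ of $S$-integers the analogous set of $S$-integral points can in fact be Zariski dense, for one can produce points with $y$ running over infinitely many values while $t$ acquires an unboundedly large denominator at a prime of $S$ (so that the archimedean-ly small coordinate has no denominator bound). Thus the interplay between the archimedean smallness of one coordinate and the $p$-adic boundedness of its denominator is the heart of the argument, and the compactness input is what I expect to be the main point to set up carefully.
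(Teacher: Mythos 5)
Your proposal is correct and follows essentially the same route as the paper: reduce to the explicit model of Proposition~\ref{p:realization}, prove a uniform denominator bound $M$ for the coordinate functions along $\X(\ZZ)$ (the paper's version of your compactness step observes instead that the polar divisors of $x,y,t$ on $\X$ are vertical), and then use an archimedean inequality on the defining equation to force one coordinate of every integral point into the finite set $\tfrac{1}{M}\ZZ \cap (-2,2)$, so that $\X(\ZZ)$ lies on finitely many coordinate curves. The only notable difference is that your estimate $\min(|x|,|y|,|t|)\le \tfrac{5}{4}$ holds for all real points independently of $M$, whereas the paper's inequality uses the lower bound $|t|\ge 1/M$ inside the estimate itself; this makes your version marginally cleaner but does not change the structure of the argument.
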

\begin{proof}
According to Proposition~\ref{p:realization} $X$ is isomorphic over $k$ to the affine surface in $\A^3$ given by the equation
\begin{equation}\label{e:K3}
(xy - 1)t = x - 1
\end{equation}
Hence the coordinates $x,y,t$ determine three rational functions $f_x,f_y,f_t$ on the scheme $\X$ which are regular when restricted to $X$. It follows that the poles of $f_x,f_y$ and $f_t$ are all ``vertical'', i.e., they are divisors of the form $M=0$ for $M \in \ZZ$. In particular, there exists divisible enough $M$ such that for every $P \in \X(\ZZ)$ the values $Mf_x(P), Mf_y(P)$ and $Mf_t(P)$ are all integers. Given an $M \in \ZZ$, we will say that a number $x \in \QQ$ is $M$-integral if $Mx \in \ZZ$. We will say that a solution $(x,y,t)$ of ~\ref{e:K3} is $M$-integral if each of $x,y$ and $t$ is $M$-integral. Now by the above there exists an $M$ such that $(f_x(P),f_y(P),f_z(P))$ is an $M$-integral solution of~\ref{e:K3} for every $P \in \X(\ZZ)$. It will hence suffice to show that for every $M \in \ZZ$, the set of $M$-integral solutions of~\ref{e:K3} is not Zariski dense (in the affine variety~\ref{e:K3}).

Since the function $f(y) = \frac{y-1}{y}$ on $\RR$ converges to $1$ as $y$ goes to either $\pm\infty$ it follows that there exists a positive constant $C > 0$ such that $\left|\frac{y}{y + 1}\right| < C$ for every $M$-integral number $-1 \neq y \in \QQ$. We now claim that if $(x,y,t)$ is a $M$-integral solution then either $|y| \leq 2M$ or $|x-1| \leq 2C$ or $t = 0$. Indeed, suppose that $(x,y,t)$ is an $M$-integral triple such that $|y| > 2M$, $|x-1| > 2C$, and $t \neq 0$. Then $|x-1| > 2\left|\frac{y-1}{y}\right|$ and hence 
$$ |(xy-1)t| = |(x-1)y + (y-1)||t| > \frac{1}{2M}|(x-1)y| > |x-1|, $$
which means that $(x,y,t)$ is not a solution to~\ref{e:K3}. It follows that all the $Q$-integral solutions of~\ref{e:K3} lie on either the curve $t = 0$, or on the curve $x-1 = i$ for $|i| \leq 2C$ an $M$-integral number, or the curve $y = j$ for $|j| \leq 2M$ an $M$-integral number. Since this collection of curves is finite it follows that $M$-integral solutions to~\ref{e:K3} are not Zariski dense.
\end{proof}

Theorem~\ref{t:zariski} raises the following question:
\begin{question}\label{q:zariski}
Is it true that $\X(\ZZ)$ is not Zariski dense for any integral model of any log K3 surface of Picard rank $0$?
\end{question}

We shall now show that the answer to question~\ref{q:zariski} is negative. Theorem I of ~\cite{Na88} implies, in particular, that there exists a real quadratic number field $L = \QQ(\sqrt{a})$, ramified at $2$ and with trivial class group, such that the reduction map $\OO_L^* \lrar (\OO_L/\mathfrak{p})^*$ is surjective for infinitely many prime ideals $\mathfrak{p} \subseteq \OO_L$ of degree $1$ over $\QQ$. Given such an $L$, we may find a square-free positive integer $a \in \QQ$ such that $L = \QQ(\sqrt{a})$. Now let $\X$ be the ample log K3 surface over $\ZZ$ given by the equation
\begin{equation}\label{e:d7-dense}
(x^2 - ay^2)t = y - 1.
\end{equation}
We now claim the following:
\begin{prop}
The set $\X(\ZZ)$ of integral points is Zariski dense. 
\end{prop}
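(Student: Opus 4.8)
The plan is to exploit the fact that $x^2 - ay^2 = \N_{L/\QQ}(x + y\sqrt{a})$ is the norm form of the real quadratic field $L = \QQ(\sqrt{a})$. Since $L$ is ramified at $2$ we have $\OO_L = \ZZ[\sqrt{a}]$, so integral points on $\X$ correspond to elements $\bet = x + y\sqrt{a} \in \OO_L$ for which $\N_{L/\QQ}(\bet)$ divides $y-1$, the third coordinate then being $t = (y-1)/\N_{L/\QQ}(\bet)$. The idea is to produce, for infinitely many primes $p$, an entire infinite family of such $\bet$ of norm $\pm p$; the corresponding integral points then sweep out infinitely many of the distinct affine conics $Q_p \colon x^2 - ay^2 = \pm p$ sitting inside $\X$, and this will force Zariski density.

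Concretely, I would first invoke the cited theorem of \cite{Na88} to fix the infinitely many degree-one primes $\mathfrak{p} \subseteq \OO_L$ for which the reduction map $\OO_L^* \lrar (\OO_L/\mathfrak{p})^* \cong \FF_p^*$ is surjective (here $p = \N_{L/\QQ}\mathfrak{p}$, and we discard the finitely many $\mathfrak{p}$ dividing $2a$). Because the class group of $L$ is trivial, each such $\mathfrak{p}$ is principal, say $\mathfrak{p} = (\pi)$ with $\N_{L/\QQ}(\pi) = \pm p$; this already yields one solution of $x^2 - ay^2 = \pm p$. The heart of the argument is then to adjust $\pi$ by a unit so as to enforce the divisibility $p \mid y - 1$ (which is exactly what makes $t$ an integer).

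For this step I would reduce modulo the conjugate prime $\bar{\mathfrak{p}} = \sig(\mathfrak{p})$, which inherits the surjectivity property since $\sig$ permutes $\OO_L^*$ and carries $\mathfrak{p}$ to $\bar{\mathfrak{p}}$. Writing $\phi, \bar\phi \colon \OO_L \lrar \FF_p$ for reduction modulo $\mathfrak{p}, \bar{\mathfrak{p}}$, so that $\phi(\sqrt{a}) = s$ and $\bar\phi(\sqrt{a}) = -s$ are the two square roots of $a$ mod $p$, one has $\phi(\pi) = 0$ while $\bar\phi(\pi) \neq 0$; for a unit $u$ the coordinate $y$ of $\pi u = x + y\sqrt{a}$ then satisfies $2s\,y \equiv -\bar\phi(\pi u) \pmod{p}$. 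Since $\bar\phi(\pi u) = \bar\phi(\pi)\bar\phi(u)$ runs over all of $\FF_p^*$ as $u$ runs over $\OO_L^*$, the residue of $y$ runs over every nonzero class, and in particular there is a unit with $y \equiv 1 \pmod{p}$. Moreover the units realizing a prescribed value of $\bar\phi$ form a coset of the finite-index subgroup $\ker(\bar\phi|_{\OO_L^*})$ of the infinite group $\OO_L^*$, so there are \emph{infinitely many} such $u$, giving infinitely many distinct integral points on the conic $Q_p$.

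Finally I would deduce Zariski density. If the resulting set of integral points were contained in a proper closed subset, i.e.\ in a finite union of irreducible curves $Z_1 \cup \dots \cup Z_m$, then for each good $p$ infinitely many of the points lying on the irreducible conic $Q_p$ would lie on a single $Z_i$, forcing $Q_p$ to coincide with that $Z_i$; as the $Q_p$ are pairwise distinct this would require infinitely many components, a contradiction. The main obstacle is the unit-adjustment step: this is precisely where the surjectivity hypothesis of \cite{Na88} is indispensable, since it upgrades a mere solution of the norm equation into one satisfying the congruence $p \mid y-1$, while the class-number-one hypothesis plays the complementary role of producing the generator $\pi$ in the first place.
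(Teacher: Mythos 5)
Your proof is correct and takes essentially the same route as the paper: both use Narkiewicz's theorem together with the class-number-one and ramified-at-$2$ hypotheses to produce, for infinitely many degree-one primes $p$, a generator $\pi$ that can be adjusted by units so that the $y$-coordinate satisfies $y \equiv 1 \pmod{p}$, yielding infinitely many integral points on each of the infinitely many conics $x^2 - ay^2 = \pm p$ and hence Zariski density. The only difference is cosmetic: you multiply $\pi$ by units and control its residue modulo the conjugate prime $\sig(\mathfrak{p})$, whereas the paper multiplies $\sig(\pi)$ by units and controls its residue modulo $\mathfrak{p}$.
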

\begin{proof}
Let $\mathfrak{p} = (\pi) \subseteq \OO_L$ be an odd prime ideal of degree $1$ such that $\OO_L^* \lrar (\OO_L/\mathfrak{p})^*$ is surjective and let $p = N_{L/\QQ}(\pi)$. We note that $L$ is necessarily unramified at $p$. Let $r \in \FF_p^*$ be the image of the residue class of $\sqrt{a}$ under the (unique) isomorphism $\OO_L/\mathfrak{p} \cong \FF_p$. Let $\sig \in \Gal(L/\QQ)$ be a generator. Then the image of the residue class of $\sqrt{a}$ under the isomorphism $\OO_L/\sig(\mathfrak{p}) \cong \FF_p$ is necessarily $-r$.

By our assumption on $L$ there exists a $u \in \OO_L^*$ such that the residue class of $u\sig(\pi)$ mod $\mathfrak{p}$ is equal to $2r$. Since $L$ is ramified at $2$ there exists $x_0,y_0 \in \ZZ$ such that $u\sig(\pi) = x_0 + \sqrt{a}y_0$. Let $\ovl{x}_0,\ovl{y}_0 \in \FF_p$ be the reductions of $x_0$ and $y_0$ mod $p$ respectively. By construction we have $\ovl{x}_0 - r\ovl{y}_0 = 0$ and $\ovl{x}_0 + r\ovl{y}_0 = 2r$ and hence $\ovl{x}_0 = r$ and $\ovl{y}_0 = 1$. It follows that $y_0 - 1$ is divisible by $p$ and since $x_0^2 - ay_0^2 = N_L(u\sig(\pi)) = \pm p$ there exists a $t_0 \in \ZZ$ such that 
$$ (x_0^2 - ay_0^2)t_0 = y_0 - 1 .$$
In particular, the triple $(x_0,y_0,t_0)$ is a solution for~\ref{e:d7-dense}. Let $\C_p \subseteq \X$ be the curve given by the additional equation $x^2 - ay^2 = p$. We have thus found an integral point on $\C_p$. By multiplying $u$ with units whose image in $\OO_L/\mathfrak{p}$ is trivial we may produce in this way infinitly many integral points on $\C_p$. Now any irreducible curve in $X$ is either equal to $C_p = \C_p \otimes_\ZZ \QQ$ for some $p$ or intersects each $C_p$ at finitely many points. Our construction above produces infinitely many $p$'s for which $\C_p$ has infinitely many integral points, and hence $\X(\ZZ)$ is Zariski dense, as desired.
\end{proof}

We end this section with the following question:
\begin{question}
Does conjecture~\ref{c:pic-0} hold for the surface~\ref{e:d7-dense}? If so, what is the appropriate value of $b$?
\end{question}
%
%

\subsection{A counter-example to the Brauer-Manin obstruction}\label{ss:BM}

Let $k = \QQ$ and $S = \{\infty\}$. In this subsection we will prove Theorem~\ref{t:counter}, by constructing a log K3 surface over $\ZZ$ which is $S$-split and for which the integral Brauer-Manin obstruction is insufficient to explain the lack of integral points.

Let $a,b,c,d,m \in \ZZ$ and set $\Del = ad-bc$. Assume that $acm\Del \neq 0$. Consider the smooth affine scheme $\X \subseteq \A^3_{\ZZ}$ given by the equation
\begin{equation}\label{e:k3-2}
((ax+b)y + m)t = cx+d .
\end{equation}
Then $X = \X \otimes_{\ZZ} \QQ$ is a particular case of Example~\ref{e:d7} and and is hence an ample log K3 surface of Picard rank $0$ with $\alp_X = 0$. In this section we will show that the Brauer-Manin obstruction is not enough to explain the failure of the integral Hasse principle for schemes of this type.

Note that when the equation $cx+d = 0$ is soluble mod $m$ (e.g. when $c$ is coprime to $m$) then $\X$ has an integral point with $y = 0$. Furthermore, if $ax_0+b=\pm 1$ for some $x_0$ then we have a solution with $x=x_0$ and $z=1$. Otherwise, there don't seem to be any obvious integral points on $\X$. We note that by the same argument $\X$ always has a real point and if $a,c$ are coprime than $\X$ has a $\ZZ_p$-point for every $p$.
%

As in the proof of Proposition~\ref{p:character} we may use the long exact sequence associated to cohomology with compact support to prove that $H^2_c(X_{\ovl{\QQ}},\ZZ/n) \cong \ZZ/n$ with trivial Galois action. By Poincare duality with compact support we get that $H^2(X_{\ovl{\QQ}},\ZZ/n(2)) \cong \ZZ/n$ as well, and hence $H^2(X_{\ovl{\QQ}},\ZZ/n(1)) \cong \ZZ/n(-1)$. Since $\Pic(X_{\ovl{\QQ}}) = 0$ it follows that $H^2(X_{\ovl{\QQ}},\GG_m) = H^2(X_{\ovl{\QQ}},\QQ/\ZZ(1)) \cong \QQ/\ZZ(-1)$, and since $H^3(\QQ,\GG_m) = 0$ the Hochschild-Serre spectral sequence implies that
$$ \Br(X)/\Br(k) = \left(\Br\left(X_{\ovl{k}}\right)\right)^{\Gam_k} = (\QQ/\ZZ(-1))^{\Gam_{k}} \cong \ZZ/2 .$$ 
Let us now exhibit a specific generator. 
Consider the quaternion algebra on $X$ given by
$$ A = \left(-\frac{c(ax+b)}{\Del}, \frac{(ax+b)y + m}{m}\right) $$
Then it is straightforward to check that $A$ is unramified in codimension 1 and hence unramified by purity. Furthermore, the residue of $A$ along the curve $C^0 \subseteq \A^2$ given by $(ax+b)y + m = 0$ is the class of the restriction $-\frac{c(ax+b)}{\Del}|_{C^0}$, which is non-trivial (and in fact a generator of $H^1(C^0,\ZZ/2) \cong \ZZ/2$). We may hence conclude that $A$ is a generator of $\Br(X)$.

Now assume that $a,b,c,d$ are pairwise coprime and such that there exists a prime $q$ dividing both $c$ and $m$ exactly once (and hence $a,b,d,\Del,\frac{c}{m}$ are all units mod $q$). Reducing mod $q$ the function $f$ becomes $\ovl{f}(x,y) = (ax+b)y$ and the function $g$ becomes $\ovl{g}(x) = d$. The equation defining $\X \otimes_{\ZZ} \FF_q$ then becomes
$$ (ax + b)yz = d .$$
with $a,b,d$ non-zero, and so $ax+b,y$ and $z$ are invertible functions on $\X_{\FF_q}$. Direct computation shows that the residue of $A$ mod $q$ is the class $[\frac{c}{\Del m}y] \in H^1(\X_{\FF_q},\ZZ/2)$. It is then clear that the evaluation map 
$$ \ev_A: \X(\ZZ_p) \lrar \ZZ/2 $$
is surjective. This means that $A$ poses no Brauer-Manin obstruction (as noted above our assumptions imply in particular that $\X$ has an integral points everywhere locally).

\begin{proof}[Proof of Theorem~\ref{t:counter}]
Consider the scheme $\X$ given by the equation
$$ ((11x+5)y + 3)z = 3x+1 .$$
Here $a,b,c,d = 11,5,3,1$ and $\Del = -4$. We claim that $\X(\ZZ) = \emptyset$. Indeed, observe that $|11x+5| > |3x+1| + 3$ for every $x \in \ZZ$. Now assume that $(x_0,y_0,z_0)$ was a solution. Then $z_0$ would have to be non-zero and so we would obtain
$$ |11x_0 + 5| > |3x_0 + 1| + 3 > |(11x_0 + 5)y_0 + 3| + 3 \geq |11x_0+5||y_0| $$ 
which implies $|y_0| = 0$. But this is impossible because then $3z_0$ would be equal to $3x_0+1$. It follows that $\X(\ZZ) = \emptyset$ as desired. By the above we also know that $\X(\A_{S})^{\Br(X)} \neq \emptyset$ and hence the integral Brauer-Manin obstruction is trivial. Furthermore, since $\alp_X = 0$ it follows that $X$ is $S$-split.
\end{proof}

%


\begin{thebibliography}{}


\bibitem[CT10]{CT10}
Chambert-Loir, A., Tschinkel, Y., Igusa integrals and volume asymptotics in analytic and adelic geometry, \emph{Confluentes Mathematici}, 2.03, 2010, p. 351--429.


\bibitem[CT03]{CT03}
Colliot-Th{\'e}l{\`e}ne, J.-L., Points rationnels sur les fibrations, \emph{Higher dimensional varieties and rational points}, Budapest 2001, Bolyai Soc. Math. Stud., 12, Springer, Berlin, 2003, p. 171--221.


\bibitem[CTW12]{CTW12}
Colliot-Th\'el\`ene, J.-L., Wittenberg, O., Groupe de Brauer et points entiers de deux familles de surfaces cubiques affines, \emph{American Journal of Mathematics}, 134.5, 2012, p. 1303--1327.

\bibitem[CTX09]{CTX09}
Colliot-Th\'el\`ene J-.L., Xu F., Brauer–-Manin obstruction for integral points of homogeneous spaces and representation by integral quadratic forms, \emph{Compositio Mathematica}, 145.02, 2009, p. 309--363.


\bibitem[Fr15]{Fr15}
Friedman, R., On the geometry of anticanonical pairs, preprint, arXiv:1502.02560, 2015.


\bibitem[HV10]{HV10}
Harari D., Voloch J. F., The Brauer–-Manin obstruction for integral points on curves, \emph{Mathematical Proceedings of the Cambridge Philosophical Society}, 149.3, Cambridge University Press, 2010.


\bibitem[Ha97]{Ha97}
Harbourne, B., Anticanonical rational surfaces, \emph{Transactions of the American Mathematical Society}, 349.3, 1997, p.1191--1208.


\bibitem[KT08]{KT08}
Kresch A., Tschinkel Y., Two examples of Brauer-–Manin obstruction to integral points. \emph{bulletin of the london mathematical society}, 40.6, 2008, p.995--1001.


\bibitem[Lo81]{Lo81}
Looijenga, E., Rational surfaces with an anti-canonical cycle, \emph{Annals of Mathematics}, 1981, p. 267--322.


\bibitem[Na88]{Na88}
Narkiewicz, W., Units in residue classes., \emph{Archiv der Mathematik}, 51.3, 1988, p.238--241.


\bibitem[VL]{VL}
Van Luijk, R., Batyrev-Manin conjecture for K3 surfaces, available at: \url{http://pub.math.leidenuniv.nl/~luijkrmvan/K3Banff/talks/Luijk.pdf}.


\bibitem[Va13]{Va13}
V\'arilly-Alvarado, A., Arithmetic of del Pezzo surfaces, \emph{Birational Geometry, Rational Curves, and Arithmetic}, Springer New York, 2013, p. 293--319.


\bibitem[WX12]{WX12}
Wei D., Xu F., Integral points for multi-norm tori, \emph{Proceedings of the London Mathematical Society}, 104(5), 2012, p. 1019--1044.

\bibitem[WX13]{WX13}
Wei D., Xu F., Integral points for groups of multiplicative type, \emph{Advances in Mathematics}, 232.1, 2013, p. 36--56.


\bibitem[Zh14]{Zh14}
Zhu, Y., Log rationally connected surfaces, preprint, arXiv:1412.2665.


\end{thebibliography}
\end{document}